\titleformat{\subsection}[runin]
{\bfseries} {\thesubsection{.}}{0.15cm}{}[.]
\titleformat{\subsubsection}[runin]
{\em}{\thesubsubsection{.}}{0.15cm}{}[.]
\newtheorem{theorem}{Theorem}[section]
\newtheorem{claim}[theorem]{Claim}
\newtheorem{lemma}[theorem]{Lemma}
\newtheorem{corollary}[theorem]{Corollary}
\theoremstyle{definition}
\newtheorem{remark}[theorem]{Remark}
\newtheorem{problem}[theorem]{Problem}
\numberwithin{equation}{section}
\numberwithin{figure}{section}
\def\Pcal{\mathcal{P}}
\def\Ocal{\mathcal{O}}
\def\Rcal{\mathcal{R}}
\def\be{\mathbf{e}}
\def\bu{\mathbf{u}}
\def\bv{\mathbf{v}}
\def\bx{\mathbf{x}}
\def\by{\mathbf{y}}
\def\bw{\mathbf{w}}
\def\bz{\mathbf{z}}
\def\Ascr{\mathscr{A}}
\def\Bscr{\mathscr{B}}
\def\Cscr{\mathscr{C}}
\def\Dscr{\mathscr{D}}
\def\Lscr{\mathscr{L}}
\def\c{\mathbb{C}}
\def\z{\mathbb{Z}}
\def\d{\mathbb{D}}
\def\b{\mathbb{B}}
\def\r{\mathbb{R}}
\def\n{\mathbb{N}}
\def\t{\mathbb{T}}
\def\z{\mathbb{Z}}
\def\igot{\mathfrak{i}}
\def\igot{\mathfrak{i}}
\def\pgot{\mathfrak{p}}
\renewcommand\imath{\igot}
\def\Agot{\mathfrak{A}}
\def\Ygot{\mathfrak{Y}}
\def\ttI{\mathtt{I}}
\def\ttJ{\mathtt{J}}
\def\dist{\mathrm{dist}}
\def\span{\mathrm{span}}
\def\length{\mathrm{length}}
\def\Flux{\mathrm{Flux}}
\def\cd{\overline{\mathbb D}}
\def\d{\mathbb D}
\newcommand\wt{\widetilde}
\newcommand\wh{\widehat}
\newcommand\di{\partial}
\newcommand\dibar{\overline\partial}
\newcommand\hra{\hookrightarrow}
\newcommand\CMI{\mathrm{CMI}}
\newcommand\E{\mathrm{e}}
\def\bx{\mathbf{x}}
\def\by{\mathbf{y}}
\begin{document}

\fancyhead[LO]{Complete minimal surfaces bounded by Jordan curves}
\fancyhead[RE]{A.\ Alarc\'on, B.\ Drinovec Drnov\v sek, F.\ Forstneri\v c \& F.\ J.\ L\'opez}
\fancyhead[RO,LE]{\thepage}

\thispagestyle{empty}

\vspace*{7mm}
\begin{center}
{\bf \LARGE Every bordered Riemann surface is a complete conformal minimal surface bounded by Jordan curves}
\vspace*{7mm}

{\large\bf A.\ Alarc\'on, B.\ Drinovec Drnov\v sek, F.\ Forstneri\v c \& F.\ J.\  L\'opez}
\end{center}


\vspace*{8mm}

\begin{quote}
{\small
\noindent {\bf Abstract}\hspace*{0.1cm}
In this paper we find approximate solutions of certain Riemann-Hilbert boundary value
problems for  minimal surfaces in $\r^n$ and null holomorphic curves in $\c^n$ for any $n\ge 3$.
With this tool in hand we construct complete conformally immersed 
minimal surfaces in $\r^n$ which are normalized by any given 
bordered Riemann surface and have Jordan boundaries. 
We also furnish complete conformal proper minimal immersions from any given 
bordered Riemann surface to any smoothly bounded, strictly convex domain 
of $\r^n$ which extend continuously up to the boundary; for $n\ge 5$ we find embeddings with these properties. 

\vspace*{0.1cm}

\noindent{\bf Keywords}\hspace*{0.1cm} Riemann surfaces, minimal surfaces, null curves.

\vspace*{0.1cm}

\noindent{\bf MSC (2010):}\hspace*{0.1cm} 53A10; 32B15, 32E30, 32H02.
}
\end{quote}


\section{Introduction} 
\label{sec:intro}  

In this paper we introduce a new tool -- the Riemann-Hilbert method -- into the study of minimal surfaces in the real 
Euclidean space $\r^n$, and null curves in the complex Euclidean space $\c^n$, for any $n\ge3$, 
and we obtain several applications. A special case of this technique is already available for $n=3$ (cf.\ \cite{AF2}), 
but the general case treated here, especially for $n>3$,  is more subtle and requires a novel approach.

Our first aim is to delve into the analysis of global geometric properties of minimal surfaces in $\r^n$ 
bounded by Jordan curves. The classical Plateau problem amounts to finding a minimal surface spanning 
a given contour. In 1931, Douglas \cite{Do} and Rad\'o \cite{Ra} independently solved this problem  
for any Jordan curve in $\r^n$.  A major topic in global theory is the study of geometry of {\em complete minimal surfaces}, 
that is, minimal surfaces which are complete in the intrinsic distance.  
The analysis of the asymptotic behavior, the conformal structure, and the influence of topological embeddedness are
central questions in this field; see  \cite{MP1,MP2} for recent surveys.
By the isoperimetric inequality, minimal surfaces in $\r^n$ spanning rectifiable Jordan curves are not complete.
Our first main result provides complete minimal surfaces with (nonrectifiable) Jordan boundaries 
in $\r^n$ for $n\ge 3$ which are normalized by any given bordered Riemann surface.

%
%
%
%
\begin{theorem}\label{th:Jordan}
Let $M$ be a compact bordered Riemann surface. 
Every conformal minimal immersion $F\colon M\to\r^n$ $(n\geq 3)$ of class $\Cscr^1(M)$ 
can be approximated arbitrarily closely in the $\Cscr^0(M)$ topology by a continuous map  $\wt F\colon M\to\r^n$ 
such that $\wt F|_{M\setminus bM}\colon M\setminus bM\to\r^n$ 
is a conformal complete minimal immersion,  $\wt F|_{bM}\colon bM\to\r^n$ is a topological embedding, and the  flux
of $\wt F$ equals the one of $F$. In particular, $\wt F(bM)$ consists of finitely many Jordan curves.
If $n\ge 5$ there exist embeddings $\wt F \colon M\hra\r^n$ with these properties. 
\end{theorem}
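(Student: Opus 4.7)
The plan is to obtain $\wt F$ as the uniform limit on $M$ of an inductively defined sequence $F_k\colon M\to\r^n$, $k\ge 0$, of conformal minimal immersions of class $\Cscr^1(M)$ with $F_0:=F$, all having the same flux as $F$, obtained by repeated application of the Riemann--Hilbert approximation theorem for conformal minimal surfaces and null holomorphic curves developed in the earlier sections of the paper. Fix a base point $p_0$ and an exhaustion $M_1\Subset M_2\Subset\cdots$ of $M\setminus bM$ by smoothly bounded compact subdomains with $p_0\in M_1$, together with a summable sequence $\epsilon_k>0$. At stage $k$ I would demand that $\|F_{k+1}-F_k\|_{0,M}<\epsilon_k$, that $\Flux(F_{k+1})=\Flux(F)$, that $\dist_{F_{k+1}}(p_0,bM)$ in the induced metric exceeds $k+1$, and that $F_{k+1}|_{bM}$ is an embedding whose injectivity modulus exceeds the $\Cscr^0(bM)$-size of all subsequent perturbations by a safe margin. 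Summability yields a $\Cscr^0$-limit $\wt F$, the flux condition passes to the limit, lower semicontinuity of the induced distance gives completeness of $\wt F|_{M\setminus bM}$, and the injectivity margin ensures that $\wt F|_{bM}$ is a topological embedding, so $\wt F(bM)$ is a disjoint union of Jordan curves.

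The inductive step is localised to the collar $M\setminus M_k$. Along each component of $bM$ I would prescribe a continuous family of small null holomorphic disks in $\c^n$, indexed by $\zeta\in bM$ and centred near $F_k(\zeta)$, whose boundaries describe a thin labyrinth of small Euclidean diameter but large length. The Riemann--Hilbert approximation theorem then furnishes a conformal minimal immersion $F_{k+1}\colon M\to\r^n$ that is $\Cscr^0$-close to $F_k$ on $M_k$ and whose boundary $F_{k+1}|_{bM}$ is $\Cscr^0$-close to the boundaries of the attached disks. Any path from $p_0$ to $bM$ must traverse the collar, and by putting sufficiently many windings into the labyrinth the extrinsic length of its image can be forced to exceed $k+1$, which gives the desired completeness estimate. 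A small generic choice of disk data preserves the flux exactly and keeps the boundary injective with the required modulus; when $n\ge 5$ an additional generic transversality argument in the interior yields an embedding on the whole of $M$, since transverse self-intersections of $2$-dimensional surfaces in $\r^n$ generically constitute a set of virtual dimension $4-n<0$ and are therefore empty.

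The principal obstacle is the Riemann--Hilbert construction itself when $n>3$. For $n=3$ one parametrises null holomorphic curves by a single meromorphic Weierstrass datum and reduces the problem to a scalar Riemann--Hilbert problem on the bordered surface, as in the precursor \cite{AF2}. In higher dimensions no such scalar reduction is available, since the null quadric $\{z_1^2+\cdots+z_n^2=0\}\subset\c^n$ is a more rigid complex submanifold on which derivatives cannot be freely prescribed without violating the null condition. I would therefore invoke the version of the Riemann--Hilbert method for null curves in $\c^n$ developed in the earlier part of the paper, which combines an Oka-type approximation on the null quadric with a period-killing correction that preserves the flux exactly. Granted this tool, the recursive scheme and the general-position arguments described above go through without further new ideas.
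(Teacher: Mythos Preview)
Your recursive outline and the general-position arguments for embedding $bM$ (and all of $M$ when $n\ge 5$) match the paper. The gap is in your mechanism for increasing the intrinsic boundary distance while keeping the $\Cscr^0$ perturbation small.

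You propose attaching null disks whose boundaries form a ``thin labyrinth of small Euclidean diameter but large length'' and then invoking the Riemann--Hilbert theorem. But Theorem~\ref{th:RHCMI} does not deliver this: it places $G(bM)$ near the boundary circles $\varkappa(\zeta,\t)$ and places the collar $G(\Omega)$ near the \emph{filled} disks $\varkappa(\rho(\zeta),\overline\d)$. A path from $M_k$ to a point of $bM$ through the collar therefore has image lying near a single filled disk, so its length is governed by the disk's \emph{radius}, not its boundary length. The intrinsic gain is thus comparable to the $\Cscr^0$ displacement, and a summable sequence of displacements cannot produce infinite boundary distance.

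The paper supplies two ingredients your plan omits. First, the disks are attached in a direction \emph{orthogonal} to $F(p)-\Ygot(p)$ for a fixed reference map $\Ygot$ on $bM$, so a push of size $\eta$ increases $\|F-\Ygot\|_{0,bM}$ only from $\delta$ to $\sqrt{\delta^2+\eta^2}$ (Lemma~\ref{lem:Jordan1}); taking $\eta_j=c/j$ gives $\sum_j\eta_j=\infty$ while $\delta_j^2=\delta_0^2+c^2\sum_j j^{-2}$ stays bounded (Lemma~\ref{lem:Jordan2}). Second, since Theorem~\ref{th:RHCMI} for $n>3$ only allows a \emph{constant} null direction on each arc $I_i$, one must partition $bM$ into arcs and separately treat the endpoints $p_{i,j}$ where adjacent arcs meet: this is done by the exposing-arcs technique of Forstneri\v c--Wold (Lemma~\ref{lem:step1}), which stretches $M$ along short external arcs at the $p_{i,j}$ so that any path ending near these points already has the required length before the Riemann--Hilbert step is applied on the arc interiors.
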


Theorem \ref{th:Jordan} shows that every finite collection of smooth Jordan curves in $\r^n$ spanning a connected minimal 
surface can be approximated in the $\Cscr^0$ topology by families of Jordan curves spanning {\em complete} connected minimal 
surfaces; hence it can be viewed as an {\em approximate solution of the Plateau problem by complete minimal surfaces.}

Recall that a {\em compact bordered Riemann surface} is a compact connected surface $M$, endowed with a 
complex (equivalently, a conformal) structure, whose boundary $bM\neq\emptyset$ consists of finitely many smooth Jordan curves.  
The interior $\mathring M=M\setminus bM$ of such $M$ is an (open) {\em bordered Riemann surface}. 
A {\em conformal minimal immersion} $F\colon M\to\r^n$ is an immersion 
which is angle preserving and harmonic; every such map parametrizes a minimal surface in $\r^n$
(see Sec.\ \ref{sec:prelim}). The {\em flux} of $F$ 
is the group homomorphism $\Flux (F)\colon H_1(M,\z)\to \r^n$ given on any closed curve 
$\gamma\subset M$ by $\Flux(F)(\gamma)=\int_\gamma \Im (\partial F)$, 
where $\partial F$ is the complex differential of $F$ (see (\ref{eq:flux}) below) 
and $\Im$ denotes the imaginary part.

Theorem \ref{th:Jordan} generalizes pioneering results of Mart\'{i}n and Nadirashvili \cite{MN} who dealt with 
immersed minimal discs in $\r^3$. Their method relies on a refinement of Nadirashvili's construction of a complete 
bounded minimal disc in $\r^3$ \cite{Na} and is based on a recursive application of classical Runge's approximation 
theorem. By using the same technique, Alarc\'on \cite{A1} constructed compact complete minimal immersions in 
$\r^3$ with arbitrary finite topology; i.e., continuous maps 
$F\colon \overline \Omega\to\r^3$ such that $F|_\Omega\colon\Omega\to\r^3$ is a conformal complete minimal immersion, 
where $\Omega$ is a relatively compact domain in an open Riemann surface. However, neither the conformal structure 
of $\Omega$, nor the topology of its boundary, can be controlled by this method; in particular, it can not be ensured that 
$F(\overline\Omega\setminus\Omega)\subset \r^3$ consists of Jordan curves. 
Indeed, the use of Runge's theorem in Nadirashvili's  technique does not enable one to control the placement in $\r^3$ 
of the entire surface, and hence one must cut away pieces of the surface in order to keep it suitably bounded; this surgery
causes the aforementioned problems. By a different technique, relying on Runge-Mergelyan type theorems 
(cf.\ \cite{AL1,AL-No}),  Alarc\'on and L\'opez obtained analogous results for nonorientable minimal surfaces in $\r^3$ \cite{AL-Ritore}, 
null holomorphic curves in $\c^3$, and complex curves in $\c^2$ \cite{AL-Israel}. (Recall that a null curve in $\c^n$, $n\geq 3$, 
is a complex curve whose real and imaginary parts are minimal surfaces in $\r^n$.) 
Their technique still does not suffice to control the conformal structure of the surface or the topology of its boundary.

By introducing the Riemann-Hilbert method into the picture,  Alarc\'on and Forstneri\v c  \cite{AF2} 
recently constructed complete bounded minimal surfaces in $\r^3$, and null curves in $\c^3$, normalized by 
any given bordered Riemann surface.   The principal advantage of the Riemann-Hilbert method over Runge's theorem
in this problem is that it enables one to work on a fixed bordered Riemann surface, controlling its global 
placement in $\r^n$ or $\c^n$ at all stages of the construction.

The main novelty of Theorem \ref{th:Jordan} is that {\em we prescribe both the complex structure} 
(any bordered Riemann surface) {\em and the asymptotic behavior} 
(bounded by Jordan curves) of complete bounded minimal surfaces; furthermore, we obtain 
results in any dimension $n\ge 3$. This is achieved by developing the Riemann-Hilbert technique,
first used in \cite{AF2} for $n=3$,  in any dimension $n\ge 3$, and by further improving its implementation
in the recursive process. Theorem \ref{th:Jordan} is new even in the case $n=3$. 
Furthermore, for $n>3$ this seems to be the first known approximation result by complete bounded minimal surfaces, 
even if one does not take care of the conformal structure of the surface and the asymptotic behavior of its boundary. 
Previous results in this line are known for complex curves in $\c^n$, $n\geq 2$, 
and null curves in $\c^n$, $n\geq 3$; cf.\ \cite{AF,AF2,AL-CY}.

The Riemann-Hilbert method developed in this paper also allows us to establish essentially optimal results 
concerning {\em proper complete minimal surfaces in convex domains}; see Theorems \ref{th:proper} and \ref{th:topology} below.
These results, and the methods used in their proof, will hopefully provide a step towards the more ambitious goal 
of finding optimal geometric conditions on a domain $D\subset\r^n$ for $n\ge 3$ which guarantee that $D$ admits plenty 
of proper (possibly also complete) conformal minimal immersions from any given bordered Riemann surface. 
An explicit motivation comes from the paper \cite{DF2007} on proper holomorphic images
of bordered Riemann surfaces in complex manifolds endowed with an exhaustion function whose Levi form 
has at least two positive eigenvalues at every point. 

We shall say that a domain $\Dscr \subset\r^n$ is {\em smoothly bounded} if it is 
bounded and  its boundary $b\Dscr=\overline \Dscr\setminus \Dscr$ is smooth (at  least of class $\Cscr^2$).

%
%
%
%
\begin{theorem}\label{th:proper}
Let $\Dscr\subset\r^n$  $(n\ge 3)$ be a bounded strictly convex domain with $\Cscr^2$ smooth boundary,
let $M$ be a compact bordered Riemann surface, and let $F\colon M\to \overline\Dscr$ be a conformal minimal immersion 
of class $\Cscr^1(M)$. Then the following assertions hold:
\begin{enumerate}[\rm (a)]
\item If $F(M)\subset \Dscr$ then $F$ can be approximated uniformly on compacts in $\mathring M = M\setminus bM$ 
by continuous maps $\wt F\colon M\to \overline \Dscr$ such that   
$\wt F|_{\mathring M}\colon \mathring M \to\Dscr$ is a conformal complete proper minimal immersion with 
$\Flux(\wt F)=\Flux(F)$. 
\vspace{1mm}
\item If $F(bM)\subset b\Dscr$ then $F$ can be approximated in the $\Cscr^0(M)$ topology
by continuous maps $\wt F\colon M\to\overline \Dscr$ such that  
$\wt F|_{\mathring M}\colon \mathring M \to\Dscr$ is a conformal complete 
proper minimal immersion.
\end{enumerate}
In either case, the frontier $\wt F(bM)\subset b\Dscr$ consists of finitely many closed curves.  
If $n\geq 5$ then the approximation can be achieved by maps $\wt F$ which are embeddings on $\mathring M$.
\end{theorem}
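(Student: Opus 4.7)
\textbf{The plan} is to produce $\wt F$ as the uniform-on-compacts limit of an inductive Riemann-Hilbert construction, following the schema introduced in \cite{AF2} for $n=3$ but now powered by the general-dimensional Riemann-Hilbert tool developed earlier in this paper. Part (b) I would reduce to part (a) by first shrinking $M$ to a slightly smaller concentric bordered Riemann surface $M'\Subset \mathring M$ on which $F$ is a conformal minimal immersion with $F(M')\Subset \Dscr$, applying (a) on $M'$, and then attaching a thin collar near $bM$ in which the approximant stays $\Cscr^0$-close to the original $F$ thanks to continuity up to $b\Dscr$.

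For part (a), fix a compact exhaustion $K_1\Subset K_2\Subset \cdots \Subset \mathring M$ with $\bigcup_j K_j = \mathring M$, a base point $p_0\in K_1$, and summable sequences $\epsilon_j,\eta_j\searrow 0$. Build inductively conformal minimal immersions $F_j\colon M\to \overline\Dscr$ of class $\Cscr^1(M)$, with $F_0=F$, $F_j(M)\subset \Dscr$ and $\Flux(F_j)=\Flux(F)$, satisfying for every $j\ge 1$:
\begin{enumerate}[\rm (i)]
\item $\|F_j - F_{j-1}\|_{\Cscr^0(K_{j-1})} < \epsilon_j$;
\item $\dist(F_j(bM),\, b\Dscr) < \eta_j$;
\item $\dist_{F_j}(p_0,\, bM)\ge j$, where $\dist_{F_j}$ is the intrinsic distance associated to $F_j$.
\end{enumerate}

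The inductive step is the heart of the argument. Given $F_j$, I would partition $bM$ into short arcs $\alpha_1,\ldots,\alpha_N$; on each arc select a target point $q_i\in b\Dscr$ close to $F_j(\alpha_i)$ and, using the strict convexity of $\Dscr$, choose a small family of line segments contained in $\overline\Dscr$ that push $F_j(\alpha_i)$ toward a neighbourhood of $q_i$. Feeding these segments as boundary discs into the Riemann-Hilbert lemma of the paper produces a new conformal minimal immersion with the same flux which on each $\alpha_i$ tracks the displaced boundary, stays $\epsilon_{j+1}$-small from $F_j$ on $K_j$, and whose image remains in $\overline\Dscr$ because the discs lie in $\overline\Dscr$ by convexity. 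Iterating such modifications with carefully alternating tangential directions (as in the Nadirashvili-type step) stretches the induced metric enough to secure (iii) at the next level; a final tiny inward retraction then places the image strictly inside $\Dscr$, yielding $F_{j+1}$.

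A Cauchy argument from (i) produces a limit $\wt F=\lim F_j$ uniformly on compacta of $\mathring M$, conformal minimal with $\Flux(\wt F)=\Flux(F)$; (iii) gives completeness, and (ii) combined with the uniform smallness of perturbations on large compacta and with strict convexity yields a continuous extension $\wt F\colon M\to \overline\Dscr$ with $\wt F(bM)\subset b\Dscr$, hence properness of $\wt F|_{\mathring M}$. Since $bM$ has finitely many smooth Jordan components and $\wt F$ is continuous on $M$, the frontier $\wt F(bM)$ consists of finitely many closed curves. For $n\ge 5$ I would interleave a generic $\Cscr^0$-small perturbation at each step within the space of conformal minimal immersions; because the expected dimension of the self-intersection locus of an immersion of a real surface in $\r^n$ is negative when $n\ge 5$, each $F_j$ can be made an embedding of $\mathring M$, and embeddedness passes to the limit by a standard transversality argument. \emph{The main obstacle} is the simultaneous balancing, at each inductive step, of four competing requirements: staying inside $\overline\Dscr$ (demands convexity and small discs), stretching the intrinsic distance (demands sizeable tangential oscillations of the boundary), $\Cscr^0$-smallness on $K_j$ (demands localization of the perturbation), and pushing the boundary image close to $b\Dscr$ (demands outward discs). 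The new Riemann-Hilbert method of the paper is designed precisely to allow all four conditions to be met at once in any dimension $n\ge 3$.
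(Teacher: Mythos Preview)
Your inductive scheme for part (a) is close in spirit to the paper's, but there is a genuine gap: your conditions (i)--(iii) only give convergence of $F_j$ uniformly on compacta of $\mathring M$, and this does \emph{not} by itself yield a continuous extension $\wt F\colon M\to\overline\Dscr$. Knowing that $\dist(F_j(bM),b\Dscr)<\eta_j\to 0$ says nothing about whether $\{F_j|_{bM}\}$ is Cauchy in $\Cscr^0(bM)$; the boundary images could wander along $b\Dscr$. The paper handles this by proving a quantitative global bound (Lemma \ref{lem:proper}, property (a) in the paper, packaged in Theorem \ref{th:proper2} (iv)):
\[
\|F_j-F_{j-1}\|_{0,M}<\sqrt{2\delta_{j-1}^2+2\delta_{j-1}/\kappa_{\Bscr^{j-1}}^{\rm min}},
\]
with the $\delta_j$ chosen summable. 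This estimate comes from choosing the Riemann-Hilbert discs to lie in planes \emph{orthogonal} to the inward normal of a strictly convex shell $\Lscr\Subset\Dscr$; a Pythagorean argument then bounds the displacement in terms of the width of the shell and the minimal curvature. Your sketch of ``line segments toward a target point $q_i\in b\Dscr$'' does not provide this; without it, the continuous extension claim, and hence the assertion that $\wt F(bM)\subset b\Dscr$ is a finite family of closed curves, is unsupported. (The paper's Remark \ref{rem:continuous} shows exactly how the argument breaks when this summable $\Cscr^0(M)$ control is unavailable.) Separately, the completeness step is not just ``alternating tangential directions'': the paper uses a distinct mechanism (Lemmas \ref{lem:Jordan1}--\ref{lem:Jordan2}) that exposes boundary points and exploits orthogonality to get an arbitrarily large intrinsic gain with an \emph{arbitrarily small} $\Cscr^0(M)$ cost, which is essential for the summability above.

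Your reduction of (b) to (a) does not work as stated. After applying (a) on $M'\Subset\mathring M$ you obtain $\wt F'\colon M'\to\overline\Dscr$ with $\wt F'(bM')\subset b\Dscr$ and $\wt F'|_{\mathring M'}$ already complete and proper into $\Dscr$; there is no way to ``attach a collar'' on $M\setminus M'$ and obtain a conformal minimal immersion of $\mathring M$ into $\Dscr$, nor is $M'$ in general biholomorphic to $M$. The paper instead scales: assuming $0\in\Dscr$, set $F_0=(1-\lambda)F$ for small $\lambda>0$, so $F_0(bM)\subset\Dscr\setminus\overline{\Dscr_{2\lambda d}}$, and then applies Theorem \ref{th:proper2} with $\Lscr=\Dscr_{2\lambda d}$ and $\eta=2\lambda d$. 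The quantitative $\Cscr^0(M)$ bound of Theorem \ref{th:proper2} then gives $\|\wt F-F\|_{0,M}=O(\sqrt\lambda)$, yielding the $\Cscr^0(M)$ approximation in (b).
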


Theorem \ref{th:proper} is proved in Sec.\ \ref{sec:topology}. 
It may be viewed as a version of Theorem \ref{th:Jordan} in which we additionally ensure that 
the boundary curves of a minimal surface are contained in the boundary of the domain, at the cost of losing
topological embeddedness of these curves. A partial result in this direction 
can be found in \cite{A2} where the first named author constructed compact complete proper minimal immersions of 
surfaces with arbitrary finite topology into smoothly bounded strictly convex domains in $\r^3$,
but without control of the conformal structure on the surface or the flux of the immersion. 

In part {\rm (a)} of Theorem \ref{th:proper} it is clearly impossible to ensure approximation in the $\Cscr^0(M)$ topology; 
however, a nontrivial upper bound for the maximum norm $\|\wt F-F\|_{0,M}$, 
depending on the placement of the boundary $F(bM)\subset\Dscr$, is provided by Theorem \ref{th:proper2}. 
In part (b) the flux can be changed by an arbitrarily small amount.  
Unlike in Theorem \ref{th:Jordan}, we are unable to guarantee that the boundary $\wt F(bM)\subset b\Dscr$ 
consists of Jordan curves; see Remark \ref{rem:Jordan}. Hence the following remains an open problem. 

\begin{problem}
Let $\Dscr$ be a smoothly bounded, strictly convex domain  in $\r^n$ for some $n\ge 3$. 
Does there exist a complete proper minimal surface in $\Dscr$ bounded by finitely
many Jordan curves in the boundary $b\Dscr$ of $\Dscr$? What is the answer if $\Dscr$ is the unit ball of $\r^n$?
\end{problem}

Theorem \ref{th:proper} fails in general for weakly convex domains.
For instance, no polyhedral region of $\r^3$ admits a complete proper minimal disc that is
continuous up to the boundary \cite{AN,Na2}; it is easily seen that such a disc 
would violate Bourgain's theorem on the radial variation of bounded analytic functions \cite{Bourgain}.
(See also \cite{G2} for the case of complex discs in a  bidisc of $\c^2$.)
Our next result shows that the situation is rather different if we do not insist on continuity up to the boundary. 

%
%
%
%
\begin{theorem}\label{th:topology}
Let $D$ be a convex domain in $\r^n$ for  some $n\ge 3$. 
\begin{enumerate}[\rm (a)]
\item If $M$ is a compact bordered Riemann surface and $F\colon M\to D$ is a conformal minimal immersion 
of class $\Cscr^1(M)$, then $F$ can be approximated uniformly on compacts in $\mathring M$ 
by conformal complete proper minimal immersions $\wt F\colon \mathring M\to D$ with $\Flux(\wt F)=\Flux(F)$. 
If $n\geq 5$ then $\wt F$ can be chosen an embedding.
\vspace{1mm}
\item Every open orientable smooth surface $S$ carries a full complete proper minimal immersion 
$S\to D$ (embedding if $n\ge 5$) with arbitrary flux.  
\end{enumerate}
\end{theorem}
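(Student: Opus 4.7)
The plan is to deduce Theorem \ref{th:topology} from Theorem \ref{th:proper}(a) via two nested exhaustion arguments.

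For part (a), first write the convex domain $D$ as an increasing union $D=\bigcup_{k\ge 1}D_k$ of smoothly bounded, strictly convex, relatively compact subdomains $D_1\Subset D_2\Subset\cdots\Subset D$; such an exhaustion exists for any convex $D$ (intersect $D$ with growing balls, convolve the convex bodies with a small smoothing kernel to make their boundaries $\Cscr^2$ and strictly convex, and shrink slightly to stay inside $D$). Since $F(M)$ is compact we may assume, after relabeling, that $F(M)\subset D_1$. Fix also a compact exhaustion $K_1\Subset K_2\Subset\cdots\subset \mathring M$. I would then construct inductively $\Cscr^1(M)$ conformal minimal immersions $F_k\colon M\to D_k$, with $F_0=F$, such that for each $k\ge 1$:
\begin{enumerate}[(i)]
\item $\|F_k-F_{k-1}\|_{0,K_k}<\varepsilon_k$ with $\sum_k\varepsilon_k<\infty$,
\item $F_k(M\setminus K_k)\subset D_k\setminus \overline{D_{k-1}}$,
\item $\Flux(F_k)=\Flux(F)$,
\item the intrinsic distance in $(M,F_k^*g_{\r^n})$ from $K_1$ to $M\setminus K_k$ is at least $k$.
\end{enumerate}
The inductive step is obtained by running finitely many rounds of the Riemann--Hilbert deformation procedure that underlies the proof of Theorem \ref{th:proper}(a): applied to $F_{k-1}$, which maps $M$ into $\overline{D_{k-1}}\Subset D_k$ (with $D_k$ bounded, smoothly bounded, and strictly convex), it produces an $F_k$ that pushes the image close to $bD_k$ outside $K_k$ while staying $\Cscr^1(M)$, approximating on $K_k$, preserving the flux, and adding at least one unit of intrinsic length across the annular region $M\setminus K_k$. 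With summable errors, the sequence converges uniformly on compacts in $\mathring M$ to a map $\wt F\colon \mathring M\to D$; harmonicity and conformality pass to the limit because the $\c^n$-valued differentials $\partial F_k$ converge uniformly on compacts. Item (ii) yields properness into $D$, (iv) yields completeness, (iii) preserves the flux. For $n\ge 5$, a generic perturbation at each step keeps $F_k$ an embedding on $K_k$ by the standard codimension count, so $\wt F$ is an embedding of $\mathring M$.

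For part (b), endow the open orientable surface $S$ with a complex structure (classical) and exhaust it by compact bordered Riemann surfaces $M_1\Subset M_2\Subset\cdots\subset S$ with $\bigcup M_j=S$, arranged so that each inclusion $H_1(M_j,\z)\hookrightarrow H_1(S,\z)$ is injective and the images exhaust $H_1(S,\z)$. For a prescribed homomorphism $\phi\colon H_1(S,\z)\to\r^n$, start with a full conformal minimal immersion $F_1\colon M_1\to D$ in $\Cscr^1(M_1)$ with $\Flux(F_1)=\phi|_{H_1(M_1,\z)}$; such an $F_1$ is produced from a Weierstrass-type representation with prescribed periods on the finitely generated group $H_1(M_1,\z)$, and fullness is a generic condition. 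Iteratively, extend $F_j$ across the handle $M_{j+1}\setminus M_j$ with the correct periods on the new generators (a standard Runge--Mergelyan step with period fixing, as in \cite{AL1,AL-No}), then apply the construction from part (a) on $M_{j+1}$ to approximate the extension on $M_j$ and to push $M_{j+1}\setminus M_j$ toward the current level of the target exhaustion of $D$. With summable approximation errors, the limit $\wt F\colon S\to D$ is a full, complete, proper minimal immersion (embedding if $n\ge 5$) with $\Flux(\wt F)=\phi$.

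The main obstacle is the inductive step of (a): achieving the uniform-on-compacts approximation in (i), the outward placement (ii) producing properness in $D$, and the metric increment (iv) producing completeness simultaneously, with each estimate chosen small enough not to spoil the previous ones but large enough to accumulate in the limit. This delicate balancing is precisely what the Riemann--Hilbert machinery of this paper delivers through Theorem \ref{th:proper}(a), so once that theorem is in hand the essential content of Theorem \ref{th:topology} is absorbed into the two exhaustions above.
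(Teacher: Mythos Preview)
Your outline is correct and matches the paper's proof: both exhaust $D$ by smoothly bounded strictly convex domains and iterate the Riemann--Hilbert push toward the boundary (the paper's Lemma~\ref{lem:proper}) together with the intrinsic-diameter increase (Lemma~\ref{lem:Jordan2}) and, for $n\ge 5$, the general-position result (Theorem~\ref{th:gp}), with carefully chosen $\epsilon_j$'s ensuring the limit is still an immersion. One refinement for part~(b): the paper builds the limit on the open subdomain $\Rcal=\bigcup_j K_j$ of the chosen Riemann surface $\wt M$, which is only \emph{homeomorphic} to $\wt M$ (for bounded $D$ one cannot prescribe the conformal type, as noted after Theorem~\ref{th:topology}); your sketch reads as if the limit lives on all of $S$ with the initially chosen complex structure, and you should make this distinction explicit.
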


Recall that a minimal surface in $\r^n$ is said to be {\em full} if it is not contained in any affine hyperplane.
In part (b), the flux is meant with respect to the conformal structure induced on the surface
$S$ by the Euclidean metric of $\r^n$ via the immersion $S\to\r^n$. 

Theorem \ref{th:topology} is proved in Sec.\ \ref{sec:topology}. The case $D=\r^n$ was already established in \cite{AFL,AL1} 
where conformal complete proper minimal immersions $\mathcal{R}\to\r^n$  (embeddings if $n\geq 5$) with arbitrary flux are constructed 
for every open Riemann surface $\mathcal{R}$. However, since the existence of a nonconstant positive harmonic function 
is a nontrivial condition on an open Riemann surface, it is clearly impossible to prescribe the conformal type of a 
full minimal surface in any either convex or smoothly bounded domain $D$ different from $\r^n$.  
Ferrer, Mart\'in, and Meeks \cite{FMM} proved Theorem \ref{th:topology} {\rm (b)} for $n=3$ 
but without the control of the flux, whereas the cases $n\in\{3,4,6\}$ and vanishing flux follow from the results in \cite{AF1,AL-CY}. 

If one is merely interested  in the existence of proper minimal surfaces (without approximation),
then it suffices to assume that the domain $D\subset \r^n$ admits a smooth strongly convex boundary point $p\in bD$. 
Indeed, by using the approximation statement in Theorem \ref{th:proper2} one can find proper conformal 
minimal immersions into $D$ with boundaries in a small neighbourhood of $p$ in $bD$, 
thereby proving the following corollary (see Sec.\ \ref{sec:topology}).

%
%
%
\begin{corollary}\label{co:bdddomains}
If $D\subset\r^n$ $(n\ge 3)$ is a domain with a $\Cscr^2$ smooth strictly convex boundary point then the following hold.
\begin{enumerate}[\rm (a)]
\item 
Every compact bordered Riemann surface $M$ admits a continuous map $\wt F\colon M \to \overline D$ 
such that $\wt F(\mathring M)\subset D$ and  $\wt F|_{\mathring M}\colon \mathring M \to D$ is a conformal 
full complete proper minimal immersion. If $n\geq 5$ then $\wt F$ can be taken to be an embedding on $\mathring M$.
\vspace{1mm}
\item  
Every open orientable smooth surface $S$ carries a full complete proper minimal immersion $S\to D$ 
(embedding if $n\ge 5$) with arbitrary flux.
\end{enumerate}
\end{corollary}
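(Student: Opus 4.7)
The plan is to localize near the strictly convex boundary point $p$. Using that $p$ is a $\Cscr^2$ strictly convex boundary point of $D$, I first construct a smoothly bounded, strictly convex domain $\Dscr\subset\r^n$ with $\Dscr\subset D$ such that some open neighborhood $V$ of $p$ in $bD$ lies entirely in $b\Dscr\cap bD$. This is a routine local construction using a strictly convex $\Cscr^2$ defining function for $bD$ near $p$ and patching it with the defining function of a large strictly convex cap attached smoothly away from $p$. Once such $\Dscr$ is in hand, Theorem \ref{th:proper} and its quantitative refinement Theorem \ref{th:proper2} (referenced in the paper) apply to $\Dscr$, and I deduce the corollary for $D$ by arranging the boundaries of the approximating immersions to cluster near $p$ and hence land in $V\subset bD$.

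For part (a), I start with a full conformal minimal immersion $F\colon M\to\Dscr$ whose image lies in a very small neighborhood $W\subset\Dscr$ of $p$ with $\overline W\cap b\Dscr\subset V$. Such $F$ is obtained by taking any full conformal minimal immersion $M\to\r^n$ (these are abundant by classical Runge--Mergelyan results for minimal surfaces) and rescaling and translating it to place its image inside $\Dscr$ near $p$. Now apply Theorem \ref{th:proper2} to approximate $F$ by a continuous map $\wt F\colon M\to\overline\Dscr$ whose restriction to $\mathring M$ is a conformal full complete proper minimal immersion into $\Dscr$ (an embedding if $n\ge 5$), with $\|\wt F-F\|_{\Cscr^0(M)}$ small enough to force $\wt F(M)\subset\overline W$. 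Then $\wt F(bM)\subset b\Dscr\cap\overline W\subset V\subset bD$, so $\wt F$ is a continuous map $M\to\overline D$ with $\wt F(\mathring M)\subset D$; continuity up to $bM$ together with $\wt F(bM)\subset bD$ forces $\wt F|_{\mathring M}$ to be proper into $D$, while completeness, conformality and fullness are intrinsic and transfer from $\Dscr$ to $D$.

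For part (b), fix any complex structure on $S$ and exhaust $S$ by compact bordered Riemann surfaces $S_1\Subset S_2\Subset\cdots$ with $\bigcup_k S_k=S$. I build inductively conformal minimal immersions $F_k\colon S_k\to\Dscr$ such that
\begin{enumerate}[\rm (i)]
\item $\|F_k-F_{k-1}\|_{\Cscr^0(S_{k-1})}<2^{-k}$,
\item $\Flux(F_k)$ agrees on $H_1(S_{k-1},\z)$ with the prescribed flux,
\item the intrinsic distance from a fixed basepoint to $bS_k$ in the pullback metric exceeds $k$,
\item $F_k(bS_k)$ lies in the $1/k$-neighborhood of $p$.
\end{enumerate}
Each induction step is a Runge--Mergelyan extension of $F_{k-1}$ to a conformal minimal immersion on $S_k$ with the prescribed flux on newly generated loops, followed by an application of Theorem \ref{th:proper2} on $S_k\to\Dscr$, where the Riemann--Hilbert deformation is used simultaneously to inflate the intrinsic distance to $bS_k$ and to concentrate $F_k(bS_k)$ near $p$. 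The limit $F=\lim_k F_k\colon S\to\overline\Dscr$ is a conformal minimal immersion by uniform convergence and elliptic bootstrap, complete by (iii), has the prescribed flux by (ii), is full if each $F_k$ is full (a generic condition), and is proper into $D$ because by (iv) the ends of $S$ map into arbitrarily small neighborhoods of the single boundary point $p\in bD$.

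The main obstacle is the induction step in part (b): one must reconcile (iii), which demands large intrinsic distances produced by long Riemann--Hilbert perturbations, with (iv), which requires the boundary image to remain concentrated near the single point $p$, while simultaneously preserving the approximation (i) and flux (ii) on the earlier piece. The required interleaved scheme of Runge--Mergelyan extensions, Riemann--Hilbert disks of carefully chosen sizes, and a diagonal extraction is by now standard in the theory of complete bounded minimal surfaces (compare \cite{AF2,AFL,AL1}), so no new conceptual difficulty arises once Theorem \ref{th:proper2} has been established.
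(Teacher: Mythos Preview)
Your treatment of part (a) is essentially the paper's argument: place a small full conformal minimal immersion $F\colon M\to\Dscr$ near the strictly convex point, then apply Theorem~\ref{th:proper2} with $\Lscr=\Dscr_\lambda$ and $\eta=\lambda$ small so that the bound $\|\wt F-F\|_{0,M}<\sqrt{2\lambda^2+2\lambda/\kappa_\Lscr^{\rm min}}$ forces $\wt F(bM)$ into the patch $b\Dscr\cap bD$. One point of imprecision: you write ``with $\|\wt F-F\|_{\Cscr^0(M)}$ small enough'', but Theorem~\ref{th:proper2} does not let you make this norm arbitrarily small for a fixed $F$; the bound is governed by $\eta$, which in turn forces $F(bM)$ into the shell $\Dscr\setminus\overline\Lscr$. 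So you must first choose $\lambda$ small and then place $F$ in $\Dscr\setminus\overline{\Dscr_\lambda}$ near $p$, exactly as the paper does.

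For part (b) there is a genuine gap. You propose to obtain each $F_k\in\CMI^1(S_k,\r^n)$ by applying Theorem~\ref{th:proper2} to $S_k$, but that theorem outputs a map whose restriction to $\mathring S_k$ is \emph{complete}; such a map is not of class $\Cscr^1(S_k)$ and cannot be extended across $bS_k$ by Mergelyan approximation at the next step. What the induction actually requires is a $\Cscr^1$ immersion on $S_k$ with $\dist_{F_k}(p_0,bS_k)>k$ (finite but large) and with $F_k(S_k\setminus \mathring S_{k-1})$ trapped in a thin shell near $b\Dscr$ close to $p$. The paper obtains this not from Theorem~\ref{th:proper2} but from its two building blocks applied once per step: Lemma~\ref{lem:proper} pushes $F_k(bS_k)$ into the next shell $\Bscr^j\setminus\overline{\Bscr^j_{\lambda_j}}$ while controlling the $\Cscr^0$ drift, and Lemma~\ref{lem:Jordan2} increases the boundary distance past $k$ by an arbitrarily small $\Cscr^0$ perturbation. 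The additional bookkeeping condition $({\rm c}_j)$, bounding $F_j(K_j)$ inside a ball of slowly growing radius around $p$, is what guarantees that the limit surface stays inside $\Dscr\cap\b(p,r)$ and hence is proper into $D$; your condition (iv) controls only $F_k(bS_k)$, not $F_k(S_k\setminus\mathring S_{k-1})$, so your properness argument is incomplete as written. Once you replace the appeal to Theorem~\ref{th:proper2} by these two lemmas and track the shell condition on all of $S_k\setminus\mathring S_{k-1}$, your scheme coincides with the paper's.
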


Note that every smoothly bounded relatively compact domain in $\r^n$ admits a strictly convex boundary point,
so Corollary \ref{co:bdddomains} applies to such domains.

To the best of our knowledge, Theorems \ref{th:Jordan}, \ref{th:proper}, and \ref{th:topology}, 
and Corollary \ref{co:bdddomains} provide the first examples of complete bounded {\em embedded} 
minimal surfaces in $\r^5$ with controlled topology; furthermore, they solve the {\em conformal  Calabi-Yau problem} 
for {\em embedded bordered Riemann surfaces in convex domains of $\r^n$} for any $n\geq 5$. 

Recall that the {\em Calabi-Yau problem} deals with the existence and geometric properties of 
complete bounded minimal surfaces; see for instance \cite{AL-CY,AF2} and the references therein for
the state of the art of this subject. Regarding the {\em embedded Calabi-Yau problem},  it is still unknown whether there 
exist complete bounded embedded minimal surfaces in $\r^3$. By results of Colding and Minicozzi \cite{CM} and 
Meeks, P\'erez, and Ros \cite{MPR}, there is no such surface with finite genus and at most countably many ends. 
Recently Alarc\'on and L\'opez \cite{AL-JEMS} and Globevnik \cite{Gl,G3} constructed complete bounded 
embedded complex curves  (hence minimal surfaces) in $\c^2\equiv\r^4$;  
however, their method does not provide any information on the topology and the conformal structure of their examples. 
Globevnik  actually constructed a holomorphic function $f$ on the unit ball of $\c^n$ for any $n\ge 2$ \cite{Gl}
and, more generally, on any pseudoconvex domain  $D\subset \c^n$ for $n\ge 2$ \cite{G3}, 
such that every divergent curve in $D$ on which $f$ is bounded has infinite length. 
It follows that every level set $\{f=c\}$ of such a
function is a complete closed complex hypersurface in $D$.  

By applying the Riemann-Hilbert technique, developed in Sec.\ \ref{sec:RH} below, it is straightforward to
extend all main results of the paper \cite{DF2014}  to null hulls of 
compact sets in $\c^n$ and  minimal hulls of compact sets in $\r^n$ for any $n> 3$.
As pointed out in   \cite{DF2014}, the only reason for restricting to $n=3$ was that a 
Riemann-Hilbert lemma for null curves in dimension $n>3$ was not available at that time.
We postpone this to a subsequent publication.

The proofs of our results depend in an essential way on a new tool that we obtain in this paper, 
namely the Riemann-Hilbert boundary value problem for null curves in $\c^n$ and minimal surfaces in $\r^n$ 
for any $n\ge 3$ (cf.\ Theorems \ref{th:RH} and \ref{th:RHCMI}), generalizing the one developed  in 
\cite{AF2} in dimension $n=3$.  We also use  a number of  other recent ideas and  techniques: 
gluing holomorphic sprays on Cartan pairs (cf.\ \cite{DF2007} and \cite{F2011}),
the Mergelyan approximation theorem for conformal minimal immersions in $\r^n$ (cf.\ \cite{AL1,AFL}), 
the general position theorem for minimal surfaces in $\r^n$ for $n\ge 5$ (cf.\ \cite{AFL}), the method of exposing boundary points 
on a bordered Riemann surface (Forstneri\v c and Wold \cite{FW0}), and the circle of ideas around the construction of 
compact complete minimal immersions in $\r^3$ (cf.\ \cite{AL-Israel} and Mart\'in and Nadirashvili \cite{MN}) and complete 
bounded minimal surfaces in $\r^3$ normalized by  bordered Riemann surfaces (cf.\ \cite{AF2} and also \cite{AF}).

%
%

Our methods  easily adapt to give results analogous  to Theorems \ref{th:Jordan}, \ref{th:proper}, and \ref{th:topology} 
in the context of complex curves in $\c^n$, $n\geq 2$, and holomorphic null curves in $\c^n$, $n\geq 3$.
Indeed, all tools used in the proof  are available for these families of curves:  
the Riemann-Hilbert method for holomorphic null curves in arbitrary dimension is provided in this paper
(see Theorem \ref{th:RH}), while Runge-Mergelyan type theorems for null curves are proved in \cite{AF1,AL1}. 
For example, by following the proof of Theorem \ref{th:Jordan} one can show the following result.

\begin{theorem}
\label{th:further}
Every bordered Riemann surface $M$ admits a continuous map $F\colon M\to \c^n$, $n\ge 2$, such that 
$F|_{\mathring M}\colon \mathring M  \to\c^n$ is a complete holomorphic immersion
and $F(bM)$ is a finite union of Jordan curves. If $n\ge 3$ then there is a topological 
embedding $F\colon M\hra \c^n$ with these properties.
\end{theorem}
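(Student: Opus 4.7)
The plan is to mimic the proof of Theorem \ref{th:Jordan}, replacing conformal minimal immersions into $\r^n$ by holomorphic immersions into $\c^n$. Every tool used there has an equally strong, and typically simpler, counterpart in the holomorphic category: the Riemann-Hilbert method of Theorem \ref{th:RH} (formulated for null curves in $\c^n$, hence a fortiori applicable in the unconstrained holomorphic setting); the Runge-Mergelyan theorem for holomorphic maps into $\c^n$ (classical); the method of exposing boundary points of Forstneri\v c and Wold \cite{FW0}; and, for $n\ge 3$, the standard transversality argument that generic holomorphic immersions $M\to\c^n$ are embeddings whenever the target dimension exceeds~$2$.

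Starting from any holomorphic immersion $F_0\in\Cscr^1(M,\c^n)$ (which exists for $n\ge 2$: any nonconstant holomorphic function on $M$, together with an auxiliary function supplied by Runge approximation, yields such an $F_0$), I would inductively construct a sequence of holomorphic immersions $F_k\colon M\to\c^n$ of class $\Cscr^1(M)$ such that {\rm (i)} $\|F_{k+1}-F_k\|_{\Cscr^0(M)}<\epsilon_k$ with $\sum_k\epsilon_k<\infty$; {\rm (ii)} the intrinsic distance from a fixed base point $p_0\in\mathring M$ to $bM$ in the $F_k$-pullback metric exceeds $k$; and {\rm (iii)} the restriction $F_k|_{bM}$ is ``$\delta_k$-injective'' with $\delta_k\searrow 0$ chosen small enough that the uniform limit $F|_{bM}$ is injective on each boundary component. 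Property (ii) is achieved by the now standard labyrinth technique: partition $bM$ into many short arcs and apply Theorem \ref{th:RH} successively to push each arc a long way in $\c^n$, stretching the pulled-back metric near $bM$ while moving a prescribed interior compact set by less than $\epsilon_k$. Property (iii) is ensured by prefacing each Riemann-Hilbert step with an exposing-of-boundary-points deformation, which geometrically separates the target arcs in $\c^n$ so that the subsequent ``tall'' Riemann-Hilbert perturbations cannot collapse them onto each other in the limit. Taking $F=\lim F_k$ then produces the desired continuous extension to $M$; for $n\ge 3$, an additional generic perturbation at each step, supported in $\mathring M$, ensures that each $F_k$ is an embedding, and the $\delta_k$-injective boundary data upgrade the limit to a topological embedding $F\colon M\hra\c^n$.

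The main obstacle, just as in Theorem \ref{th:Jordan}, is the tension between the completeness requirement (which forces large displacements of short boundary arcs at each step) and the injectivity requirement for the boundary map (which demands that those displacements do not bring distant arcs into collision). Resolving this tension is precisely the point of the combined expose-then-Riemann-Hilbert step, and the quantitative bookkeeping of how far each arc is allowed to move at each stage is the technical heart of the construction. This bookkeeping is essentially identical to the one in the proof of Theorem \ref{th:Jordan}; the only modification is the purely notational one of replacing real conformal minimal data by their complex holomorphic analogues, which simplifies rather than complicates matters since no null-quadric constraint is present.
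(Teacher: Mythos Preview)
Your proposal is correct and matches the paper's own treatment: the paper does not give a separate proof of this theorem but simply asserts that it follows by repeating the proof of Theorem~\ref{th:Jordan} with conformal minimal immersions replaced by holomorphic immersions, noting that every tool (Riemann-Hilbert, Mergelyan, exposing boundary points, general position for embeddings when $n\ge 3$) is available---and easier---in the holomorphic category. Your identification of the ingredients and the inductive scheme is exactly this, so there is nothing to add.
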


A similar result can be established for null curves $F\colon M\to\c^3$, $n\ge 3$; recall that the general position of 
null curves in $\c^n$ is embedded  for any $n\ge 3$ (cf.\ \cite[Theorem 2.4]{AF1}).

%
%
%
%
\subsection*{Outline of the paper} 
In Sec.\ \ref{sec:prelim} we introduce the notation and preliminaries. 
In Sec.\ \ref{sec:RH} we develop the Riemann-Hilbert method for null curves 
(Lemmas \ref{lem:RH3}, \ref{lem:RH} and Theorem \ref{th:RH})  
and minimal surfaces  (Theorem \ref{th:RHCMI}) in arbitrary dimension $n\ge 3$. 
Sec.\ \ref{sec:Jordan} is devoted to the proof of Theorem \ref{th:Jordan}.
The main technical part is contained in Lemma \ref{lem:Jordan2}, asserting that any conformal minimal immersion 
$M\to\r^n$ of a compact bordered Riemann surface can be approximated in the $\Cscr^0(M)$ topology 
by conformal minimal immersions $M\to\r^n$ whose boundary distance from a fixed interior point is as big as desired. 
In Sec.\ \ref{sec:Jordan} we also prove that the general position of the boundary curves $bM\to\r^n$ 
of a conformal minimal immersion $M\to\r^n$ is embedded for $n\geq 3$; see Theorem \ref{th:gp}. 
Lemma \ref{lem:Jordan2} is also exploited in the proof of Theorem \ref{th:proper2} in Sec.\ \ref{sec:proper}.
The latter result is the key to the proof of Theorems \ref{th:proper}, \ref{th:topology} 
and Corollary \ref{co:bdddomains}  given  in Sec.\ \ref{sec:topology}.


\section{Notation and preliminaries}\label{sec:prelim}

 We denote by $\langle\cdot,\cdot\rangle$, $\|\cdot\|$, and $\dist(\cdot,\cdot)$ the Euclidean scalar product, norm, and distance in $\r^n$, $n\in\n$.  Given a vector $\bx\in\r^n\setminus\{0\}$ we denote by 
 $\langle\bx\rangle^\bot=\{\bw\in\r^n\colon \langle\bw,\bx\rangle=0\}$ its orthogonal complement.
If $K$ is a compact topological space and $f\colon K\to\r^n$ is a continuous function, 
we denote by $\|f\|_{0,K}$ the maximum norm of $f$.

Set $\d=\{\zeta\in\c\colon |\zeta|<1\}$ and $\t= b{\d}=\{\zeta\in\c\colon |\zeta|=1\}$.

As usual we will identify $\c^n\equiv \r^{2n}$. We shall write $\imath=\sqrt{-1}$ . By $\Re(\bz)$ and $\Im(\bz)$ 
we denote the real and the imaginary part of a point $\bz\in\c^n$. Let $\bz=(z_1,\ldots,z_n)$ be complex
coordinates on $\c^n$. Denote by $\Theta$ the holomorphic bilinear form on $\c^n$ given by
\begin{equation}\label{eq:bilinear}
	\Theta(\bz,\bw)= \sum_{j=1}^n z_j w_j.
\end{equation}
Let $\Agot=\Agot^{n-1}\subset \c^n$ denote the {\em null quadric}  
\begin{equation}\label{eq:Agot}
	\Agot^{n-1} =\{\bz=(z_1,\ldots,z_n) \in\c^n\colon \Theta(\bz,\bz)=z_1^2+\ldots+z_n^2=0\}.
\end{equation}
This is a conical algebraic subvariety of $\c^n$ that is not contained in  any hyperplane of $\c^n$
and is nonsingular except at the origin. We also write $\Agot^{n-1}_*=\Agot^{n-1}\setminus\{0\}$.
In the sequel we shall omit the superscript when the dimension is clear from the context.

Let us recall the basic facts concerning minimal surfaces (see e.g.\ Osserman \cite{Osserman}).
Let $M$ be an open Riemann surface, and let $\theta$ a nowhere vanishing holomorphic $1$-form on $M$
 (such exists by the Oka-Grauert principle, cf.\ Theorem 5.3.1 in \cite[p.\ 190]{F2011}). 
The exterior derivative on $M$ splits into the sum $d=\di+\dibar$ of the $(1,0)$-part $\di$ and the $(0,1)$-part $\dibar$. 
An immersion $F=(F_1,\ldots,F_n)\colon M\to\r^n$ ($n\geq 3$) is {\em conformal} (angle preserving) 
if and only if its Hopf differential $\sum_{j=1}^n (\partial F_j)^2$ vanishes everywhere on $M$, 
that is to say, if $\partial F/\theta\in\Agot$ (\ref{eq:Agot}). A conformal immersion $F\colon M\to\r^n$ is 
minimal if and only if it is harmonic, and in this case $\Phi:=\partial F$ is a $\c^n$-valued  holomorphic $1$-form 
vanishing nowhere on $M$. Given a base point $p_0\in M$, $F$ can be written in the form
\begin{equation}\label{eq:FPhi}
	F(p)=F(p_0)+\Re\Big(\int_{p_0}^p \Phi\Big), \quad p\in M.
\end{equation}
This is called the {\em Weierstrass representation} of $F$.
Conversely, if an $n$-dimensional holomorphic 1-form $\Phi$ on $M$ has vanishing real periods 
(i.e.,  its real part $\Re(\Phi)$ is exact) and satisfies $\Phi/\theta\colon M\to \Agot_*$, then the map 
$F\colon M\to\r^n$ given by \eqref{eq:FPhi} is a conformal minimal immersion.

Let $H_1(M;\z)$ denote the first homology group of $M$ with integer coefficients. 
The {\em flux map} of a conformal minimal immersion $F\colon M \to\r^n$ is the group homomorphism 
$\Flux(F)\colon H_1(M;\z)\to\r^n$ given by
\begin{equation} \label{eq:flux}
	\Flux(F)(\gamma)=\Im\Big(\int_\gamma \partial F\Big) \quad \text{for every closed curve }\gamma\subset M.
\end{equation}
Since the $1$-form $\di F$ on $M$ is holomorphic and therefore closed, the integral on the right hand side is 
independent of the choice of a path in a given homology class.

Next we introduce the mapping spaces that will be used in the paper. 

If $M$ is an open Riemann surface then $\CMI(M,\r^n)$ denotes the set of all conformal minimal immersions $M\to \r^n$.

Assume now that $M$ is a {\em compact bordered Riemann surface}, i.e., a  compact connected Riemann surface with smooth boundary 
$\emptyset \ne bM \subset M$ and interior $\mathring M=M\setminus bM$. For any $r\in \z_+$  we denote by $\Ascr^r(M)$ the space 
of all functions $M\to \c$ of class $\Cscr^r(M)$ that are holomorphic in $\mathring M$. We write $\Ascr^0(M)=\Ascr(M)$. 
It is classical that every compact bordered Riemann surface $M$ can be considered as a smoothly bounded compact domain in 
an open Riemann surface $\wt M$ and, by Mergelyan's theorem, each function in $\Ascr^r(M)$ can be approximated in the
$\Cscr^r(M)$ topology by holomorphic functions on a neighborhood of $M$.  

For any $r\in \n$ we denote by $\CMI^r(M,\r^n)$ the set of all  conformal minimal immersions $M\to\r^n$ 
of class $\Cscr^r(M)$. More precisely, an immersion $F\colon M\to \r^n$ of class $\Cscr^r$ belongs to 
$\CMI^r(M,\r^n)$ if and only if $\di F$ is a $(1,0)$-form of class $\Cscr^{r-1}(M)$ 
which is holomorphic in the interior $\mathring M=M\setminus \partial M$ and has
range in the punctured null quadric $\Agot_*$ (\ref{eq:Agot}). 
For $r=0$ we define $\CMI^0(M,\r^n)$ as the class of all continuous maps $F\colon M\to \r^n$ such that
$F\colon\mathring M\to\r^n$ is a conformal minimal immersion.

By the local Mergelyan theorem for conformal minimal immersions \cite[Theorem 3.1 (a)]{AFL}, 
every $F\in \CMI^r(M,\r^n)$ for $r\ge 1$ can be approximated in the $\Cscr^r(M)$ topology by 
conformal minimal immersions on an open neighborhood of $M$ in the ambient surface $\wt M$. 
If $M$ is Runge in $\wt M$ then every such $F$ can also be approximated in the $\Cscr^r(M)$ topology 
by conformal minimal immersions $\wt M\to\r^n$ (cf.\ \cite[Theorem 5.3]{AFL}). 

We say that a holomorphic map $f\colon M\to \Agot_*$ is {\em nondegenerate}  if the image $f(M)\subset\Agot_*$ 
is not contained in  any complex hyperplane of $\c^n$. Clearly nondegenerate implies {\em nonflat},
where the latter condition means that $f(M)$ is not contained in a (complex) ray of the null cone $\Agot$.
If $f$ is nonflat then the linear span of the tangent spaces $T_{f(p)}\Agot$ over all 
points $p\in M$ equals $\c^n$ (cf.\ \cite[Lemma 2.3]{AFL}). 
The latter condition implies the existence of a dominating
and period dominating holomorphic spray of maps $f_w\colon M\to \Agot_*$, with the parameter
$w$ in a ball in some $\c^N$ and with the core map $f_0=f$ (see \cite[Lemma 5.1]{AF1} 
or \cite[Lemma 3.2]{AFL}). This will be used in the proof of Theorems \ref{th:RH} and \ref{th:RHCMI}.

A conformal minimal immersion $F\colon M\to \r^n$ is said to be {\em nondegenerate} if the map 
$f=\partial F/\theta \colon M\to \Agot_*$ is nondegenerate, and is said to be {\em full} if $F(M)$ 
is not contained in a hyperplane of $\r^n$. Nondegenerate conformal minimal immersions $M\to\r^n$ are full, 
but the converse is true only in the case $n=3$ (see \cite{Osserman}).

If $M$ is an open Riemann surface, we denote by $\CMI_*(M,\r^n)\subset \mathrm{CMI}(M,\r^n)$
the subset consisting of all immersions which are nondegenerate on every connected component of $M$.
The analogous notation is used for compact bordered Riemann surfaces:
$\CMI^r_*(M,\r^n)$ denotes the space of all  $F\in  \CMI^r(M,\r^n)$ which are nondegenerate 
on every connected component of $M$. By Theorem 3.1 in \cite{AFL},  $\CMI_*^r(M,\r^n)$ is a dense subset of 
$\CMI^1(M,\r^n)$ in the $\Cscr^1(M)$ topology for every $r\in\n$.

If $M$ is an open Riemann surface and $F\in\CMI(M,\r^n)$, we denote by $\dist_F(\cdot,\cdot)$ 
the intrinsic distance in $M$ induced by the Euclidean metric of $\r^n$ via $F$; i.e. 
\[
\dist_F(p,q)=\inf\{\length\, F(\gamma) \colon \gamma\subset M \text{ arc connecting $p$ and $q$}\},
\]
where $\length$ denotes the Euclidean length  in $\r^n$.
Likewise we define $\dist_F$ on $M$ when $M$ is a compact bordered Riemann surface. 
If $M$ is open, the immersion $F\colon M\to\r^n$ is said to be {\em complete} if $\dist_F$ 
is a complete metric on $M$;
equivalently, if the image $F(\gamma)$ of any divergent curve $\gamma\subset M$ 
(i.e., a curve which eventually leaves any compact subset of $M$) is a curve of 
infinite length in $\r^n$.


\section{Riemann-Hilbert problem for null curves in $\c^n$}\label{sec:RH}

In this section we find approximate solutions of a general Riemann-Hilbert boundary value problem for null curves
and for confomal minimal immersions.

Let $n\in\n$, $n\geq 3$. Recall that $\Agot^{n-1}\subset \c^n$ is the null quadric
(\ref{eq:Agot}) and $\Agot^{n-1}_*=\Agot^{n-1}\setminus\{0\}$. 
We shall drop the superscript when the dimension $n$ is clear from the context.

We begin with the following essentially optimal result in dimension $n=3$ in which there is no restriction 
on the type of null discs attached at boundary points of the central null disc.
Lemma \ref{lem:RH3} generalizes \cite[Lemma 3.1]{AF2} which pertains to the case of
linear null discs of the form $\xi\mapsto r(\zeta)\,\xi \bu$ in a constant null direction 
$\bu\in \Agot_*$. The corresponding result for ordinary holomorphic discs can be found in several
sources, see e.g.\ \cite[Lemma 3.1]{DF2012}.

%
%
%
%
%
%
%
%
\begin{lemma}  \label{lem:RH3}
Let $F\colon\overline{\d}\to\c^3$ be a null holomorphic disc of class $\Ascr^1(\d)$.
Assume that $I$ is a proper closed segment in the circle $\t=b\d$, 
$r\colon \t \to \r_+ := [0,1]$ is a continuous function supported on $I$ (the {\em size function}),  
and $\sigma \colon I \times\overline{\d}\to\c^3$ is a map of class $\Cscr^1$ such that
for every $\zeta\in I$ the map $\cd \ni \xi \mapsto \sigma(\zeta,\xi)$ is an immersed holomorphic null disc
with $\sigma(\zeta,0)=0$. Let $\varkappa\colon \t \times\overline{\d}\to\c^3$ be given by
\begin{equation}\label{eq:varkappa}
	\varkappa(\zeta,\xi)=F(\zeta) + \sigma\bigl(\zeta,r(\zeta)\, \xi\bigr)
\end{equation}
where we take $\sigma\bigl(\zeta,r(\zeta)\, \xi\bigr)=0$ for $\zeta\in\t\setminus I$.
Given numbers $\epsilon>0$, $0<\rho_0<1$ and an open neighborhood $U$ of $I$ in $\overline{\d}$, 
there exist a number $\rho'\in [\rho_0,1)$ and a null holomorphic immersion 
$G\colon\overline{\d}\to\c^n$ such that $G(0)=F(0)$ and the following conditions hold:
\begin{enumerate}[\it i)]
\item $\dist(G(\zeta),\varkappa(\zeta,\t))<\epsilon$ for all $\zeta\in \t$, 
\item $\dist(G(\rho\zeta),\varkappa(\zeta,\overline{\d}))<\epsilon$ for all $\zeta\in \t$ and all 
$\rho\in [\rho',1)$, and 
\item $G$ is $\epsilon$-close to $F$ in the $\Cscr^1$ topology on 
$(\overline{\d}\setminus  U) \cup \rho'\overline{\d}$.
\end{enumerate}
Moreover, given an upper semicontinuous function $\phi \colon\c^3\to \r\cup\{-\infty\}$, 
we may achieve in addition  that 
\begin{equation}
\label{eq:small-increase}
	\int_{I} \phi \bigl( G(\E^{\imath t})\bigr) \, \frac{dt}{2\pi} \le 
 	\int^{2\pi}_0 \!\! \int_{I} \phi \bigl(\varkappa(\E^{\imath t},\E^{\imath s})\bigr) 
       \frac{dt}{2\pi} \frac{ds}{2\pi} + \epsilon
\end{equation}
\end{lemma}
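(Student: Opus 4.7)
The lemma generalizes Lemma 3.1 of \cite{AF2}, which treats the linear case $\sigma(\zeta,\xi)=\xi\bu$ in a constant null direction $\bu\in\Agot_*$. My strategy is to reduce the general case to the linear one by a double discretization (of $I$ and of each attached null disc) followed by an iterative construction.

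I would begin by partitioning $I$ into $N$ closed sub-arcs $I_1,\ldots,I_N$ with midpoints $\zeta_j$. Uniform continuity of $\sigma$ and $r$ on the compact set $I\times\overline{\d}$ ensures that for $N$ large, $\sigma(\zeta,\cdot)\approx \sigma_j:=\sigma(\zeta_j,\cdot)$ and $r(\zeta)\approx r_j:=r(\zeta_j)$ on $I_j$, to within any prescribed tolerance. Each attached null disc $\sigma_j(r_j\overline{\d})\subset\c^3$ is then discretized by sampling the tangent directions $\bu_{j,k}:=\sigma_j'(r_j \E^{\imath\theta_k^*})\in\Agot_*$ at $K$ angles $\theta_1^*,\ldots,\theta_K^*$. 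As $K\to\infty$, the closed curve $\sigma_j(r_j\t)$ is approximated by a closed null broken path with vertices $\sigma_j(r_j \E^{\imath\theta_k^*})$ whose successive edges lie in the constant null directions $\bu_{j,k}$.

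Next, I would enumerate the pairs $(j,k)$ in some order and apply Lemma 3.1 of \cite{AF2} iteratively. Starting from $F_0=F$, at the $m$-th step I apply the linear case with a size function supported on a tiny sub-arc $I_{j_m,k_m}\subset I_{j_m}$ (chosen disjoint from the sub-arcs used at earlier steps) and the constant null direction $\bu_{j_m,k_m}$. By property (iii) of the linear lemma, $F_m$ is $\Cscr^1$-close to $F_{m-1}$ outside a small neighborhood of the supporting sub-arc, so distinct modifications localize and do not significantly interfere. After $NK$ steps, $G=F_{NK}$ is a null holomorphic disc whose boundary values wind rapidly along the broken-path approximation of each $\sigma_j(r_j\t)$; properties (i) and (ii) follow from the cumulated estimates of the linear lemma together with the two discretization tolerances, and property (iii) from the localization. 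The integral inequality~(\ref{eq:small-increase}) follows from the near-equidistribution of $G(\E^{\imath t})$ along $\varkappa(\E^{\imath t},\t)$ (a consequence of the rapid winding built into the iteration) combined with upper semicontinuity of $\phi$ and a Fatou-type argument.

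\textbf{Main obstacle.} The principal difficulty is ensuring that the iterated linear attachments genuinely trace the full null disc $\sigma_j(r_j\overline{\d})$, not merely the initial tangent line, since a single linear attachment in direction $\bu$ produces a circle in a complex line rather than a bounded arc of a general null curve. This forces the base point of each new attachment to be updated to the current map $F_{m-1}(\zeta)$ and demands a convergence scheme with geometrically decreasing error terms in the polygonal approximation. Two ancillary concerns are (a) preserving $G(0)=F(0)$, arranged by small compensating flux corrections coming from a period-dominating spray of null maps into $\Agot_*$ (cf.\ \cite[Lemma 5.1]{AF1}), and (b) keeping the total $\Cscr^1$-error below $\epsilon$ on $(\overline{\d}\setminus U)\cup\rho'\overline{\d}$ throughout the iteration, which requires a careful choice of supporting neighborhoods and of the order in which the $(j,k)$-modifications are performed.
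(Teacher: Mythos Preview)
Your reduction to the linear case via discretization does not work, and the obstacle you flag in your last paragraph is fatal rather than merely technical. A single application of the linear lemma on a sub-arc $I_{j,k}$ with direction $\bu_{j,k}$ produces, for each $\zeta\in I_{j,k}$, a boundary value close to $F(\zeta)+\rho\,\t\,\bu_{j,k}$ for some radius $\rho$: a \emph{circle} in the complex null line $\c\bu_{j,k}$, centered at $F(\zeta)$. Since your sub-arcs $I_{j,k}$ are pairwise disjoint and each modification is $\Cscr^1$-small off its own support, the base point for the $(j,k)$-attachment is still essentially $F(\zeta)$, not a previously displaced value; there is no mechanism by which successive steps ``concatenate'' into a broken null path. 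Consequently, for $\zeta\in I_{j,k}$ the value $G(\zeta)$ lies near a small circle tangent to $\sigma_j(r_j\t)$ at one point, and $\dist(G(\zeta),\varkappa(\zeta,\t))$ is of order $\dist\bigl(0,\sigma_j(r_j\t)\bigr)>0$, so property (i) fails. Property (ii) fails for the same reason: as $\rho$ varies in $[\rho',1)$ the image $G(\rho\zeta)$ sweeps the \emph{linear} disc $\rho\,\overline\d\,\bu_{j,k}$, not the nonlinear null disc $\sigma_j(r_j\overline\d)$. Making the radii large enough to reach $\sigma_j(r_j\t)$ is impossible because the secant vector $\sigma_j(r_j\E^{\imath\theta})$ from $0$ is in general not null, so no linear null disc through $F(\zeta)$ meets $F(\zeta)+\sigma_j(r_j\t)$.

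The paper's argument avoids discretization altogether. It lifts both $F'$ and the fiber derivative $\partial_\xi\sigma$ through the spinor double cover $\pi\colon\c^2_*\to\Agot^2_*$, $\pi(u,v)=(u^2-v^2,\,2uv,\,-\imath(u^2+v^2))$, to maps $h,\varsigma$ into $\c^2$ where the null constraint disappears. One then perturbs $h$ additively by a high-frequency term $g_N(\zeta,c)=\sqrt{c}\sqrt{2N+1}\,\zeta^N\eta(\zeta,c\zeta^{2N+1})$ built from a rational approximation of $\sqrt{r}\,\varsigma$; the quadratic homogeneity of $\pi$ makes $\pi(h+g_N)=\pi(h)+\pi(g_N)+R_N$ with cross terms $R_N$ whose integrals tend to $0$, while an elementary integration-by-parts lemma shows $\int_0^z\pi(g_N)\,d\zeta\to\mu(z,cz^{2N+1})$. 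This yields a one-parameter family of null discs $F_N(\cdot,c)$ with $F_N(z,c)\approx\varkappa(z,cz^{2N+1})$ uniformly on $\overline\d$, from which (i)--(iii) and the integral estimate follow for large $N$ and a suitable $c\in\t$. The spinor lift is the essential idea; without it there is no linear superposition principle on $\Agot$ to exploit.
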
 


\begin{proof}
Let $\pi\colon\c^2\to\c^3$ be the homogeneous quadratic map defined by
\begin{equation}\label{eq:pi}
	\pi(u,v)=\left(u^2-v^2,2uv,-\imath(u^2+v^2)\right), \quad (u,v)\in\c^2. 
\end{equation}
Note that $\pi$ is a two-sheeted parametrization of the null quadric
$\Agot\subset \c^3$ (\ref{eq:Agot}), commonly called the 
{\em spinor parametrization} of $\Agot$, and $\pi$ is branched only at the point $(0,0)\in\c^2$. 
In particular, $\pi\colon \c^2_* =\c^2\setminus\{0\} \to \Agot_*$ is a doubly sheeted 
holomorphic covering projection. 

Set $\mu(\zeta,\xi) = \sigma\bigl(\zeta,r(\zeta)\, \xi\bigr)$ and extend it by zero to points $\zeta\in\t\setminus I$.
The conditions on $\sigma$ imply that the partial derivative 
\[
	\sigma_2(\zeta,\xi):= \frac{\di \sigma}{\di \xi}(\zeta,\xi) \in\Agot_*, \quad (\zeta,\xi)\in I\times \cd
\]
has values in  $\Agot_*$.  Note that
$ \frac{\di}{\di \xi} \mu(\zeta,\xi)=r(\zeta) \, \sigma_2(\zeta,r(\zeta)\,\xi)$.
Since $I\times \cd$ is simply connected, there is a 
lifting $\varsigma\colon  I\times \cd \to \c^2_*$ such that $\pi\circ\varsigma=\sigma_2$. Set
\[
	\eta(\zeta,\xi) = \sqrt{r(\zeta)}\, \varsigma \bigl(\zeta,r(\zeta)\, \xi\bigr),  
	\quad (\zeta,\xi)\in \t\times\cd.
\]
Then $\eta(\zeta,\xi)$ is holomorphic in $\xi\in\d$ for every fixed $\zeta\in\t$, and we have that
\[
	\pi\bigl(\eta(\zeta,\xi)\bigr) = r(\zeta) \, \sigma_2(\zeta,r(\zeta)\,\xi) 
	= \frac{\di}{\di \xi} \mu(\zeta,\xi). 
\]
We can approximate $\eta$ 
as closely as desired in the sup norm on $\t\times\cd$ by a rational map 
\begin{equation}\label{eq:tilde-eta}
	\wt \eta(\zeta,\xi) = \sum_{j=0}^l B_j(\zeta)\, \xi^j
\end{equation}
where every $B_j(\zeta)$ is a $\c^2$-valued Laurent polynomial with the only pole at $\zeta=0$. Set
\begin{equation}\label{eq:tilde-mu}
	\wt \mu(\zeta,z) = \int_0^z \pi\bigl( \wt\eta(\zeta,\xi)\bigr) \,d\xi = \sum_{k=1}^m A_k(\zeta) \, z^k
\end{equation}
where $\pi$ is the projection (\ref{eq:pi}), $m=2l+1$, 
and $A_k(\zeta)$ are $\c^3$-valued Laurent polynomials with the only pole at $\zeta=0$.
Then $\wt\mu$ is uniformly close to $\mu$ on $\t\times\cd$, and it suffices to prove 
the lemma with $\mu$ replaced by $\wt \mu$. 

To simplify the notation we now drop the tildes and assume that the functions $\eta$ and  $\mu$ 
are given by (\ref{eq:tilde-eta}) and  (\ref{eq:tilde-mu}), respectively. In particular, we have
\[
	\mu_2(\zeta,\xi):= \frac{\di \mu}{\di \xi}(\zeta,\xi) =\pi(\eta(\zeta,\xi)).
\]

%
%
\begin{lemma}\label{lem:estimate}
Let $\mu(\zeta,\xi)= \sum_{k=1}^m A_k(\zeta) \, \xi^k$
where every $A_k(\zeta)$ is a Laurent polynomial with the only pole at $\zeta=0$. 
Write $\mu_2(\zeta,\xi)=\frac{\di \mu}{\di \xi}(\zeta,\xi)$. Then 
\begin{equation}\label{eq:estimate}
	\lim_{N\to\infty} \sup_{|z|\le 1,\, |c|=1} \left | 
	\int_0^z c N\zeta^{N-1} \mu_2(\zeta,c\, \zeta^N)\, d\zeta - \mu(z,c z^N) \right | =0.
\end{equation}
\end{lemma}

\begin{proof}
We have 
$
	\mu_2(\zeta,\xi) = \frac{\di \mu}{\di \xi}(\zeta,\xi) = \sum_{k=1}^m A_k(\zeta) \, k \xi ^{k-1}
$
and hence
\[
  	c N\zeta^{N-1} \mu_2(\zeta,c\, \zeta^N) = \sum_{k=1}^m c^k  A_k(\zeta)  kN\zeta^{kN-1}.
\]
If $N\in\n$ is chosen big enough then $A_k(\zeta)\zeta^N$ vanishes at $\zeta=0$
for every $k=1,\ldots,m$. For such $N$ integration by parts gives 
\begin{equation}\label{eq:IntN}
	\int_0^z A_k(\zeta)  kN\zeta^{kN-1} \,d\zeta = \int_0^z A_k(\zeta)\,d\zeta^{kN} 
	= A_k(z) z^{kN} - \int_0^z A'_k(\zeta) \zeta^{kN}  \,d\zeta.
\end{equation}
Since $A'_k(\zeta)=\sum_{|j| \le m_k} A'_{k,j}\zeta^j$ for some integer $m_k\in\n$, we have
\[
	\int_0^z A'_k(\zeta) \zeta^{kN} d\zeta 
	= \sum_{|j| \le m_k}  \int_0^z  A'_{k,j}\zeta^{j+kN}\,d\zeta
	= \sum_{|j| \le m_k} \frac{A'_{k,j} z^{j+kN+1}}{j+kN+1}.
\]
The right hand side converges to zero uniformly on $\cd= \{|z|\le 1\}$ when
$N\to\infty$. Multiplying the equation (\ref{eq:IntN}) by  $c^k\in \t$,
summing over  $k=1,\ldots,m$ and observing that
\[
	\sum_{k=1}^m c^k A_k(z) z^{kN} = \sum_{k=1}^m A_k(z) (cz^N)^k = \mu(z,c z^N)
\]
we get (\ref{eq:estimate}). This proves Lemma \ref{lem:estimate}.
\end{proof}

For every point $c=\E^{\imath\phi}\in\t$ with $\phi\in [0,2\pi)$ we set 
$\sqrt c = \E^{\imath \phi/2}$. Consider the sequence of maps 
$g_N\colon\c\times\t \to \c^2$ $(N\in\n)$ given by 
\begin{equation}\label{eq:gN}
	g_N(\zeta,c) = \sqrt{c} \, \sqrt{2N+1}\, \zeta^N \eta(\zeta,c\, \zeta^{2N+1}).
\end{equation}
Note that $g_N$ is a holomorphic polynomial in $\zeta\in \c$ for every sufficiently big $N$, say
$N\ge N_0$. Since the projection $\pi$ (\ref{eq:pi}) is homogeneous quadratic, we have that
\[
	\pi(g_N(\zeta,c)) = c (2N+1)\zeta^{2N} \pi\bigl( \eta(\zeta,c\, \zeta^{2N+1})\bigr) 
	= c (2N+1)\zeta^{2N} \mu_2(\zeta,c\, \zeta^{2N+1})
\] 
and hence
\[
	\int_0^z \pi(g_N(\zeta,c)) \,d\zeta = \int_0^z c (2N+1)\zeta^{2N} \mu_2(\zeta,c\, \zeta^{2N+1})\, d\zeta.
\]
By Lemma \ref{lem:estimate} we have
\begin{equation}\label{eq:estimate2}
	\lim_{N\to\infty} \sup_{|z|\le 1,\, c\in\t} \left | 
	\int_0^z \pi\bigl(g_N(\zeta,c)\bigr) d\zeta - \mu(z,c z^{2N+1}) \right | =0.
\end{equation}

The derivative $F' \colon \overline{\mathbb D}\to \Agot^2_*$ of the given null disc $F$ 
lifts to a continuous map $h=(u,v)\colon \cd\to\c^2_*$ that is holomorphic on $\d$. 
With $g_N$ as in (\ref{eq:gN}) we consider the sequence of maps $h_N\colon \cd\times \t \to\c^2$ 
given for $N\ge N_0$ by
\begin{equation}\label{eq:hN}
	h_N(\zeta,c) = h(\zeta) + g_N(\zeta,c),\quad \zeta \in \cd,\ c\in\t.
\end{equation}
A general position argument shows that for a generic choice of $F$ 
we have $h_N(\cd\times \t)\subset \c^2_*$ for all sufficiently big $N\in \n$ (see the proof of Lemma 3.1 in
\cite{AF2} for the details). Assume that this is the case. Consider the holomorphic null discs
\[
	F_N(z,c) = F(0) + \int_0^z \pi(h_N(\zeta,c))\,d\zeta,\quad z\in \cd,\ c\in \t.
\]
Since $\pi$ is a homogeneous quadratic map, we have 
\begin{equation}\label{eq:pi-hN}
	\pi(h_N(\zeta,c))  =  \pi(h(\zeta)) + \pi(g_N(\zeta,c)) + R_N(\zeta,c)
\end{equation}
where each component of  the remainder term $R_N(\zeta,c)$ is a linear combination with constant 
coefficients of terms $g_{N,j}(\zeta,c) u(\zeta)$ and  $g_{N,j}(\zeta,c) v(\zeta)$ for $j=1,2$. 
(Here we write $g_N=(g_{N,1},g_{N,2})$.) We claim that 
\begin{equation}\label{eq:estimate3}
	\lim_{N\to\infty} \sup_{|z|\le 1,\, c\in\t} \left |\int_0^z R_N(\zeta,c) \, d\zeta \right | =0.
\end{equation}
To see this, set
\[
	C_1= \sup_{ |\zeta| \le1, |z|\le 1} |\zeta^{N_0}\eta(\zeta,z)|,\quad 
	C_2=\max\bigl\{ \sup_{ |\zeta| \le1} |u(\zeta)|,\,  \sup_{ |\zeta| \le1}|v(\zeta)| \bigr\}.
\]
Then  $\sup_{ |\zeta| \le1} |\zeta^{N_0} \eta(\zeta,c \zeta^{2N+1})| \le C_1$ for $N\in\n$. 
Given $z\in\cd$, $c\in\t$, $j\in\{1,2\}$ and $N\ge N_0$ we then have 
\begin{eqnarray}\label{eq:est-gN}
	\left| \int_0^z  g_{N,j}(\zeta,c) u(\zeta) d\zeta \right| &\le & 
	 \int_0^{|z|}  \sqrt{2N+1}\,  |\zeta|^{N-N_0}   
	 |\zeta^{N_0}\eta(\zeta,c\, \zeta^{2N+1})| \cdotp |u(\zeta)| \, d|\zeta|  \cr
	 &\le &   C_1C_2 \int_0^{|z|}  \sqrt{2N+1}\,  |\zeta|^{N-N_0} \, d|\zeta|  \cr
	&\le &    C_1C_2 \frac{\sqrt{2N+1}}{N-N_0+1}. 
\end{eqnarray}
Clearly the right hand side converges to zero as $N\to+\infty$. The same estimate holds 
with $u(\zeta)$ replaced by $v(\zeta)$. Since $R_N(\zeta,c)$
is a linear combination of finitely many such terms whose number is independent of $N$,
(\ref{eq:estimate3}) follows.

Since $\pi(h(\zeta))= F'(\zeta)$, we get by integrating the equation (\ref{eq:pi-hN}) and 
using the estimates (\ref{eq:estimate2}), (\ref{eq:estimate3}) that
\begin{equation}\label{eq:FN}
	F_N(z,c) = F(z) + \mu(z,c z^{2N+1}) + E_N(z,c) = \varkappa(z,c z^{2N+1}) + E_N(z,c)  
\end{equation}
where 
\begin{equation}\label{eq:est-EN}
	\lim_{N\to\infty} \sup_{|z|\le 1,\, c\in\t} |E_N(z,c)|=0. 
\end{equation}

It is easily seen  that for every $c\in\t$ and for all sufficiently big $N\in \n$ the null disc 
$G=F_N(\cdotp,c)$ satisfies  conditions {\em i)} -- {\em iii)}  in Lemma \ref{lem:RH3}; a suitable choice of the constant 
$c\in\t$ ensures that it also satisfies condition (\ref{eq:small-increase}). 
(See the proof of \cite[Lemma 3.1]{AF2} and of \cite[Lemma 3.1]{DF2012} for the details.) 
\end{proof}

%
%
%
%

We now proceed to the case $n>3$. This requires some additional preparations. 

Let $\bu, \bv, \bw\in \Agot_*$ be linearly independent null vectors such that
\begin{equation}\label{eq:nondeg}
	c:=\Theta(\bu,\bv)\ne 0,\quad b:=\Theta(\bu,\bw)\ne 0,  \quad a:=\Theta(\bv,\bw)\ne 0,
\end{equation}
where $\Theta$ is the complex bilinear form on $\c^n$ given in \eqref{eq:bilinear}.
Denote by $\Agot_{(\bu,\bv,\bw)}$ the intersection of $\Agot$ with the complex 
$3$-dimensional subspace $\Lscr(\bu,\bv,\bw)$ of $\c^n$ spanned by the vectors $\bu,\bv,\bw$.
Condition (\ref{eq:nondeg}) ensures that $\Agot_{(\bu,\bv,\bw)}$
is biholomorphic (in fact, linearly equivalent) to the  $2$-dimensional null quadric $\Agot^2\subset\c^3$. 
Indeed, a calculation shows that $\alpha \bu+\beta \bv+ \gamma \bw\in \Agot$ 
for some $(\alpha,\beta,\gamma)\in \c^3$ if and only if
\[
	\alpha\beta\, \Theta(\bu,\bv)+ \alpha\gamma\, \Theta(\bu,\bw)+\beta\gamma\, \Theta(\bv,\bw)=0.
\] 
Using the notation (\ref{eq:nondeg}), the above equation is equivalent to 
\[
	 \left(\frac \alpha a-\imath \frac \beta b\right)^2+
         \left(\frac \beta b-\imath \frac\gamma c\right)^2
         + \left(\frac\gamma c-\imath \frac \alpha a\right)^2=0.
\]
This is the equation of the null quadric $\Agot^2\subset\c^3$ (\ref{eq:Agot}) in the coordinates
\[
	z_1=\frac \alpha a-\imath \frac \beta b, 
	\quad z_2=\frac \beta b-\imath \frac\gamma c,
	\quad  z_3=\frac\gamma c-\imath \frac \alpha a.
\] 
Note that 
\[
	\bz=(z_1,z_2,z_3)=  (\alpha,\beta,\gamma)\cdotp  A(a,b,c)
\]
where $A(a,b,c)$ is the following nonsingular $3\times 3$ matrix with holomorphic coefficients: 
\begin{equation}\label{eq:A}
	A(a,b,c) =\left(\begin{matrix} 1/a  & 0  & -\imath/a \cr
					-\imath/b & 1/b &  0  \cr
					0  & -\imath/c & 1/c  \cr
				\end{matrix}
	\right).
\end{equation}
We are using row vectors and matrix product on the right for the convenience of notation.

Let $\pi\colon\c^2\to\c^3$ be the homogeneous quadratic map 
given by (\ref{eq:pi}). Note that 
\[
	\pi(1,0)= (1,0,-\imath),\quad \pi(0,1)=(-1,0,-\imath).
\]
Recall that the restriction $\pi\colon \c^2_*=\c^2\setminus\{0\} \to \Agot^2_*$ 
is a doubly sheeted holomorphic covering projection. We have
\begin{eqnarray*}
	\pi\left(\frac 1 {\sqrt a},0\right) &=& \left(\frac 1 a,0,-\frac\imath a\right)
	= (1,0,0) \cdotp A(a,b,c), \cr    
	\pi\left(\imath\sqrt{\frac\imath{2b}},-\sqrt{\frac\imath{2b}}\right) &=& 
	 \left(-\frac\imath b,\frac 1 b,0\right) =  (0,1,0) \cdotp A(a,b,c).
\end{eqnarray*}
The choice of $\sqrt{ }$ is fine on any simply connected subset in the domain space.
Pick a holomorphically varying family of linear automorphisms $\phi_{(a,b)}$ of $\c^2$ such that 
\[
	\phi_{(a,b)}(0,0)=(0,0), \quad \phi_{(a,b)}(1,0)=\left(\frac 1 {\sqrt a},0\right), \quad 
	\phi_{(a,b)}(0,1)=\left(\imath \sqrt{\frac\imath{2b}},-\sqrt{\frac\imath{2b}}\right).
\]
This is achieved by taking $\phi_{(a,b)}(s,t)= (s,t) \cdotp B(a,b)$ where $B$ is the $2\times 2$ matrix
\begin{equation}\label{eq:B}
	B(a,b) =\left(\begin{matrix} \frac{1}{\sqrt a}  & 0  \cr
					\imath \sqrt{\frac\imath{2b}}  & -\sqrt{\frac\imath{2b}} \cr
			\end{matrix}	\right).
\end{equation}
The map 
\[
	\c^2\ni  (s,t) \mapsto  
	\bigl(\alpha(s,t), \beta(s,t),\gamma(s,t)\bigr)=
	\pi\bigl((s,t)\cdotp B(a,b)\bigr)  \cdotp A(a,b,c)^{-1} \in \c^3
\]
is homogeneous quadratic in $(s,t)$ and depends holomorphically on $(a,b,c)$, 
and hence on the triple $(\bu,\bv,\bw)$ of null vectors satisfying (\ref{eq:nondeg}).  
By the construction the associated map
\begin{equation}\label{eq:psi}
	\c^2\ni  (s,t) \mapsto  
	\psi_{(\bu,\bv,\bw)}(s,t) =\alpha(s,t) \bu+\beta(s,t) \bv+ \gamma (s,t)\bw 
\end{equation}
is a holomorphically varying parametrization of the quadric $\Agot_{(\bu,\bv,\bw)}$ satisfying 
\begin{equation}\label{eq:normalization}
 	\psi_{(\bu,\bv,\bw)}(\be_1) =\bu, \quad  \psi_{(\bu,\bv,\bw)}(\be_2) =\bv
\end{equation}
where $\be_1=(1,0)$ and $\be_2=(0,1)$. 
Note that $\psi_{(\bu,\bv,\bw)}$ is well defined on the set of triples $(\bu,\bv,\bw)\in (\Agot^{n-1})^3$ 
satisfying condition (\ref{eq:nondeg}), except for the indeterminacies caused by the square roots
in the entries of the matrix $B$ (\ref{eq:B}). 
These reflect the fact that $\pi$ is a doubly sheeted quadratic map, so
we have  four different choices providing the normalization (\ref{eq:normalization}).

In the sequel we shall hold fixed a pair of null vectors $\bu,\bv\in \Agot_*$, subject to the condition 
$c=\Theta(\bu,\bv)\ne 0$, and will assume that $\bw=f(\zeta)$ where 
$f\colon \overline{\mathbb D}\to \Agot_*$ is a holomorphic map such that the triple 
of null vectors $(\bu,\bv,f(\zeta))$ satisfies (\ref{eq:nondeg}) 
for every $\zeta\in \cd$. 

The following lemma provides an approximate solution of the Riemann-Hilbert problem
for null holomorphic discs in $\c^n$ for any $n\ge 3$ under the condition that the null discs
attached at boundary points of the (arbitrary) center null disc $F$ have a constant
direction vector $\bu\in \Agot_*$. (For $n=3$ this result is subsumed by Lemma \ref{lem:RH3}.)
At this time we are unable to prove the exact analogue of Lemma \ref{lem:RH3}
in dimensions $n>3$, but the present version is entirely sufficient for the 
applications in this paper.

%
%
%
%
%
%
%
%
\begin{lemma}  \label{lem:RH}
Fix an integer $n\ge 3$ and let $\Agot=\Agot^{n-1}$ be the null quadric (\ref{eq:Agot}).
Assume that $\bu,\bv\in\Agot_*=\Agot\setminus\{0\}$ are null vectors such that 
$\Theta(\bu,\bv)\ne0$. Let $F\colon\overline{\d}\to\c^n$ be a null holomorphic 
immersion of class $\Ascr^1(\d)$ whose derivative $f=F'\colon \overline{\mathbb D}\to \Agot_*$ 
satisfies the following nondegeneracy condition: 
\begin{equation}\label{eq:nondeg2}
	\Theta(\bu,f(\zeta))\ne 0\ \ \text{and} \ \  \Theta(\bv,f(\zeta))\ne 0
	\quad \forall \zeta\in\cd.
\end{equation}
Let $r\colon \t=b\d \to \r_+ := [0,+\infty)$ be a continuous function (the {\em size function}),  
and let $\sigma \colon \t \times\overline{\d}\to\c$ be a function of class $\Cscr^1$ such that
for every $\zeta\in \t$ the function $\cd\ni \xi \mapsto \sigma(\zeta,\xi)$ is holomorphic on $\d$,
$\sigma(\zeta,0)=0$, the partial derivative $\frac{\di \sigma}{\di \xi}$ is nonvanishing on $\t\times \cd$, 
and the winding number of the function $\t \ni \zeta \mapsto \frac{\di \sigma}{\di \xi}(\zeta,0) \in \c_*$ 
equals zero.  Set $\mu(\zeta,\xi) = r(\zeta) \sigma(\zeta,\xi)$ and let 
$\varkappa\colon \t \times\overline{\d}\to\c^n$ be given by
\begin{equation}\label{eq:varkappa2}
	\varkappa(\zeta,\xi)=F(\zeta) + \mu(\zeta,\xi)\, \bu = F(\zeta)+ r(\zeta)\, \sigma(\zeta,\xi)\, \bu.
\end{equation}
Given numbers $\epsilon>0$ and $0<\rho_0<1$, there exist a number $\rho'\in [\rho_0,1)$ 
and a null holomorphic disc $G\colon\overline{\d}\to\c^n$ 
such that $G(0)=F(0)$ and the following conditions hold:
\begin{enumerate}[\it i)]
\item $\dist(G(\zeta),\varkappa(\zeta,\t))<\epsilon$ for all $\zeta\in \t$, 
\item $\dist(G(\rho\zeta),\varkappa(\zeta,\overline{\d}))<\epsilon$ for all $\zeta\in \t$ and all 
$\rho\in [\rho',1)$, and 
\item $G$ is $\epsilon$-close to $F$ in the $\Cscr^1$ topology on $\{\zeta\in\c\colon |\zeta|\leq \rho'\}$.
\end{enumerate}
Furthermore, if $I$ is a compact arc in $\t$ such that the function $r$ vanishes on $\t\setminus  I$
and $U$ is an open neighborhood of $I$ in $\overline{\d}$, then in addition to the above
\begin{enumerate}[\it i)]
\item[\it iv)] one can choose $G$ to be $\epsilon$-close to $F$ in the $\Cscr^1$ topology 
on $\overline{\d}\setminus  U$.
\end{enumerate}
Moreover, given an upper semicontinuous function $\phi \colon\c^n\to \r\cup\{-\infty\}$ 
and a closed arc $I\subset \t$, we may achieve in addition  that 
\begin{equation}
\label{small-increase2}
	\int_{I} \phi \bigl( G(\E^{\imath t})\bigr) \, \frac{dt}{2\pi} \le 
 	\int^{2\pi}_0 \!\! \int_{I} \phi \bigl(\varkappa(\E^{\imath t},\E^{\imath s})\bigr) 
       \frac{dt}{2\pi} \frac{ds}{2\pi} + \epsilon
\end{equation}
\end{lemma}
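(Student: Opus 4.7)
I would mimic the proof of Lemma \ref{lem:RH3}, replacing the fixed spinor parametrization $\pi$ of $\Agot^2$ by the $\zeta$-varying three-dimensional parametrization $\psi_{(\bu,\bv,f(\zeta))}\colon \c^2\to\Agot_{(\bu,\bv,f(\zeta))}$ defined in (\ref{eq:psi}). The nondegeneracy hypothesis (\ref{eq:nondeg2}), together with the assumption $\Theta(\bu,\bv)\ne 0$, is exactly condition (\ref{eq:nondeg}) for the triple $(\bu,\bv,f(\zeta))$ for every $\zeta\in\cd$, so the scalar entries of the matrices $A$ and $B$ in (\ref{eq:A})--(\ref{eq:B}) are holomorphic and nonvanishing in $\zeta$ on $\cd$, and the square-root ambiguities can be resolved globally by simple connectedness of $\cd$. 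Because the desired boundary perturbation $\varkappa(\zeta,\xi) - F(\zeta) = \mu(\zeta,\xi)\bu$ points in the single constant direction $\bu = \psi_{(\bu,\bv,f(\zeta))}(\be_1)$, the three-dimensional null subvariety $\Agot_{(\bu,\bv,f(\zeta))}$ contains everything we need, and the construction reduces to the three-dimensional pattern of Lemma \ref{lem:RH3}.

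\textbf{Construction.} Following the proof of Lemma \ref{lem:RH3}, first approximate $\mu(\zeta,\xi) = r(\zeta)\sigma(\zeta,\xi)$ uniformly on $\t\times\cd$ by a polynomial of the form $\sum_{k=1}^m A_k(\zeta)\,\xi^k$ with each $A_k$ a Laurent polynomial having its only pole at $0$; write $\mu_2 = \partial\mu/\partial\xi$. The hypotheses on $\sigma$ (nonvanishing of $\partial\sigma/\partial\xi$ on $\t\times\cd$ and vanishing winding number of $\partial\sigma/\partial\xi(\,\cdot\,,0)$ along $\t$) allow us to fix a continuous branch of $\sqrt{\mu_2(\zeta,\xi)}$ on $\t\times\cd$ that is holomorphic in $\xi$. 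Next, lift $f(\zeta)=F'(\zeta)$ through the two-to-one covering $\psi_{(\bu,\bv,f(\zeta))}\colon \c^2_*\to\Agot_{(\bu,\bv,f(\zeta))}\setminus\{0\}$ to a holomorphic map $(s_0(\zeta),t_0(\zeta))\colon \cd\to\c^2_*$; this is possible by simple connectedness. In analogy with (\ref{eq:gN}), define for $c\in\t$ and $N\in\n$
\[
g_N(\zeta,c) \;=\; \sqrt{c}\,\sqrt{2N+1}\,\zeta^N\,\sqrt{\mu_2(\zeta,c\,\zeta^{2N+1})},
\]
and set
\[
G(z) \;=\; F(0) + \int_0^z \psi_{(\bu,\bv,f(\zeta))}\bigl(s_0(\zeta)+g_N(\zeta,c),\,t_0(\zeta)\bigr)\,d\zeta.
\]
A generic position argument as in Lemma \ref{lem:RH3} ensures $(s_0+g_N,t_0)\in\c^2_*$ for all sufficiently large $N$, so the integrand lies in $\Agot_*$ and $G$ is a null holomorphic disc.

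\textbf{Analysis and main obstacle.} Since $\psi_{(\bu,\bv,f(\zeta))}$ is homogeneous quadratic in $(s,t)$ with $\psi(\be_1)=\bu$, the expansion of $\psi(s_0+g_N,t_0)$ around $(s_0,t_0)$ yields three terms: the unperturbed value $f(\zeta) = \psi(s_0,t_0)$; a linear cross term of the form $g_N(\zeta,c)\,L(\zeta)$, where $L$ is a $\c^n$-valued holomorphic function of $\zeta$ on $\cd$ (bounded because it is built from $s_0, t_0$ and the bounded entries of the matrices $A,B$); and the pure quadratic term
\[
g_N^2\,\bu \;=\; c(2N+1)\,\zeta^{2N}\,\mu_2(\zeta,c\,\zeta^{2N+1})\,\bu.
\]
Integrating from $0$ to $z$, the first term contributes $F(z)-F(0)$; by the analogue of Lemma \ref{lem:estimate} applied with $N$ replaced by $2N+1$, the quadratic piece converges uniformly on $\cd\times\t$ to $\mu(z,cz^{2N+1})\,\bu$ as $N\to\infty$; and the cross-term integral is bounded by $C\sqrt{2N+1}/N \to 0$ via the estimate (\ref{eq:est-gN}) (applied with $L$ in place of $u,v$). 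Conditions $(i)$--$(iv)$ and the averaged bound (\ref{small-increase2}) then follow exactly as in Lemma \ref{lem:RH3}, using that $g_N\to 0$ uniformly on $\rho'\cd$ for any fixed $\rho'<1$ (because of the factor $\zeta^N$), that $g_N$ vanishes where $r$ vanishes (hence on $\t\setminus I$), and the averaging/pigeonhole argument over $c\in\t$. The \emph{main obstacle} absent from the $n=3$ case is precisely the $\zeta$-dependence of the parametrization $\psi_{(\bu,\bv,f(\zeta))}$, which complicates the expansion of $\psi(s_0+g_N,t_0)$; the resolution is that all additional $\zeta$-dependent coefficients are holomorphic and bounded on $\cd$, so the quantitative estimates from Lemma \ref{lem:RH3} carry over with at most a change in constants.
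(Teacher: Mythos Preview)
Your proposal is correct and follows essentially the same approach as the paper's proof: lift $f=F'$ through the $\zeta$-dependent parametrization $\psi_{(\bu,\bv,f(\zeta))}$ to a map $h=(s_0,t_0)\colon\cd\to\c^2_*$, perturb only the first coordinate by the scalar $g_N$, and exploit the homogeneous quadratic nature of $\psi$ to split the integrand into the three pieces $f(\zeta)$, $g_N^2\,\bu$, and a cross term, with the same estimates as in Lemma~\ref{lem:RH3}. The paper's presentation differs only in minor ordering (it first takes the square root $\eta=\sqrt{\mu_2}$ and then approximates $\eta$ by a Laurent polynomial in $\zeta$, rather than approximating $\mu$ first) and in making the general position step slightly more explicit (it suffices that the second component $t_0$ be nonvanishing on $\t$).
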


\begin{remark}
For every $\zeta\in\t$ the map $\cd\ni \xi \mapsto \sigma(\zeta,\xi)\bu$ in the lemma is an 
immersed holomorphic disc directed by the null vector $\bu\in\Agot_*$; the nonnegative function
$r(\zeta)\ge 0$ is used to rescale it. If the support of $r$ is contained in a proper subarc $I$ of the circle 
$\t$ then it suffices to assume that $\sigma(\zeta,\xi)$ is defined for $\zeta\in I$, and in this 
case the winding number condition on the function $ \frac{\di \sigma}{\di \xi}(\zeta,\xi)\ne 0 $ is irrelevant.
Conditions (\ref{eq:small-increase}) and (\ref{small-increase2}) are not used in this paper, but 
they will be used in the envisioned applications to minimal hulls. 
\end{remark}

\begin{proof}
Write $\Agot=\Agot^{n-1}$ and fix a pair of null vectors
$\bu,\bv\in\Agot_*$ as in the lemma. Given a vector $\bw\in \Agot_*$ such that the triple 
$(\bu,\bv,\bw)$ satisfies condition (\ref{eq:nondeg}), we denote by 
$\psi_\bw \colon \c^2\to \Agot$ the map $\psi_{(\bu,\bv,\bw)}$ (\ref{eq:psi});
hence by (\ref{eq:normalization}) we have that 
\[
	\psi_\bw (\be_1) = \bu,\quad \psi_\bw (\be_2) =\bv.
\]
Recall that $f=F'\colon \cd \to \Agot_*$ is a map of class $\Ascr(\d)$, and
condition (\ref{eq:nondeg2}) implies that the triple of null vectors $(\bu,\bv,f(\zeta))$
satisfies condition (\ref{eq:nondeg}) for every $\zeta\in \cd$.
Due to simple connectivity of the disc the coefficients of the matrix function $B$
(\ref{eq:B}) are well defined functions of class $\Ascr(\d)$, and there is a holomorphic map 
$h=(u,v)\colon \cd\to \c^2_*$ satisfying
\begin{equation}\label{eq:h}
	\psi_{f(\zeta)}(h(\zeta))= f(\zeta),\quad \zeta\in\cd.
\end{equation}
The conditions on the functions $r\ge 0$ and $\sigma$ ensure that the partial derivative
\[
	\mu_2(\zeta,\xi):= 
	\frac{\di \mu}{\di \xi}(\zeta,\xi) = r(\zeta)\, \frac{\di \sigma}{\di \xi}(\zeta,\xi)
\]
admits a continuous square root 
\[
	\eta(\zeta,\xi) = \sqrt{\mu_2(\zeta,\xi)},  \qquad (\zeta,\xi)\in \t\times\cd
\]
that is holomorphic in $\xi\in\d$ for every fixed $\zeta\in\t$. We can approximate $\eta$ 
as closely as desired in the sup norm on $\t\times\cd$ by a rational function of the form
\begin{equation}\label{eq:tilde-eta2}
	\tilde\eta(\zeta,\xi) = \sum_{j=0}^l B_j(\zeta)\, \xi^j
\end{equation}
where every $B_j(\zeta)$ is a Laurent polynomial with the only pole at $\zeta=0$. Set
\begin{equation}\label{eq:tilde-mu2}
	\tilde\mu(\zeta,z) = \int_0^z \tilde\eta(\zeta,\xi)^2 \,d\xi 
	= \sum_{k=1}^m A_k(\zeta) \, z^k
\end{equation}
where $m=2l+1$ and $A_k(\zeta)$ are Laurent polynomials with the only pole at $\zeta=0$.
Then $\tilde\mu$ is uniformly close to $\mu$ on $\t\times\cd$, and it suffices to prove 
the lemma with $\mu$ replaced by $\tilde \mu$. 
We now drop the tildes and assume that the functions $\eta$ and  $\mu$ 
are given by (\ref{eq:tilde-eta2}) and  (\ref{eq:tilde-mu2}), respectively.
For every point $c=\E^{\imath\phi}\in\t$ with $\phi\in [0,2\pi)$ we set 
$\sqrt c = \E^{\imath \phi/2}$ and consider the sequence of functions 
\[
	g_N(\zeta,c) = \sqrt{c} \, \sqrt{2N+1}\, \zeta^N \eta(\zeta,c\, \zeta^{2N+1}).
\]
(Formally this coincides with (\ref{eq:gN}), except that $g_N$ is now scalar-valued.)
Note that $g_N$ is a holomorphic polynomial in $\zeta\in \c$ for every sufficiently big $N$ and 
\[
	\int_0^z g_N(\zeta,c)^2 \,d\zeta = \int_0^z c (2N+1)\zeta^{2N} \mu_2(\zeta,c\, \zeta^{2N+1})\, d\zeta
\]
where $\mu_2(\zeta,\xi)=\frac{\di \mu}{\di \xi}(\zeta,\xi)$.  
By Lemma \ref{lem:estimate} we have
\begin{equation}\label{eq:estimate-gN}
	\lim_{N\to\infty} \sup_{|z|\le 1,\, c\in\t} \left | 
	\int_0^z g_N(\zeta,c)^2 \,d\zeta - \mu(z,c z^{2N+1}) \right | =0.
\end{equation}
For every sufficiently big $N \in \n$ we define the map 
$h_N=(u_N,v_N)\colon \cd\times\t\to\c^2$ by 
\[
	h_N(\zeta,c)= h(\zeta) +  g_N(\zeta,c) \,\be_1 = \bigl( u(\zeta) + g_N(\zeta,c), v(\zeta) \bigr).
\] 
By general position, moving $f$ and hence $h$ slightly, we can ensure that
$h_N(\zeta,c)\ne (0,0)$ for all $(\zeta,c)\in \cd\times\t$ and all sufficiently big $N\in \n$. 
Indeed, we claim that this holds as long as $v$ has no zeros on $\t\times \t$. Under this
assumption we have that $v(\zeta)\ne 0$ in an annulus $\rho_1 \le |\zeta|\le 1$ 
for some $\rho_1 <1$, whence $h_N(\zeta,c) \ne (0,0)$ for such $\zeta$ and for any $N\in \n$
and $c\in\t$.  Since $h_N \to h$ uniformly on 
$\{|\zeta|\le \rho_1\}\times\t$ as $N\to +\infty$
and $h$ does not assume the value $(0,0)$, the same is true for $h_N$ 
for all sufficiently big $N\in \n$. 

By the definition of $\psi_{f(\zeta)}$  the map
\[
 	f_N(\zeta,c) := \psi_{f(\zeta)}\bigl(h_N(\zeta,c)\bigr) \in \c^n, 
 	\quad (\zeta,c)\in \cd\times\t
\]
has range in the punctured null quadric $\Agot_*$, and hence the map
\[
	\cd\ni z\mapsto F_N(z,c) = F(0)+ \int_0^z f_N(\zeta,c)\, d\zeta \in \c^n
\]
is an immersed holomorphic null disc for every $c\in\t$ and for all sufficiently big $N\in \n$.

We claim that if $N\in\n$ is chosen big enough then the null disc 
$G=F_N(\cdot,c)$ satisfies Lemma \ref{lem:RH} 
for a suitable choice of the constant $c=c_N \in\t$.  
Indeed, since all maps in the definition of $\psi_{f(\zeta)}$ are
either linear (given by a product with the matrices $A^{-1}$ and $B$)
or homogeneous quadratic (the projection $\pi$ given by (\ref{eq:pi})), we infer that
\begin{equation}\label{eq:psi-f}
	\psi_{f(\zeta)} \left( g_N(\zeta,c) \be_1\right) =
	g_N(\zeta,c)^2 \, \bu,  \quad (\zeta,c)\in \cd\times\t.
\end{equation}
As we also have $\psi_{f(\zeta)}(h(\zeta))=f(\zeta)$ by (\ref{eq:h}), we get for
$(\zeta,c)\in \cd\times\t$ that
\begin{equation}\label{eq:fN}
	f_N(\zeta,c) = \psi_{f(\zeta)}(h_N(\zeta,c)) = f(\zeta) + g_N(\zeta,c)^2 \, \bu + R_N(\zeta,c).
\end{equation}
In order to estimate the remainder $R_N(\zeta,c)$
we observe that the terms in $f_N(\zeta,c)$ are of three different kinds as follows:
\begin{itemize} 
\item[\rm (a)] Terms which contain $u^2$, $v^2$ or $uv$ (where $h=(u,v)$);
the sum of all such terms equals  $f(\zeta)$  in view of (\ref{eq:h}).
\item[\rm (b)] Terms  which do not contain any  component $u,v$ of $h$; 
the sum of all such terms equals $g_N(\zeta,c)^2 \, \bu$ in view of (\ref{eq:psi-f}).
\item[\rm (c)] Terms which contain exactly one component $u,v$ of $h$
and exactly one copy of the function $g_N(\zeta,c)$. 
All such terms are placed in the remainder $R_N$.
\end{itemize} 
The terms described above are multiplied by various elements of the matrices $A^{-1}$ (\ref{eq:A}) and 
$B$ (\ref{eq:B}); those are  functions in $\Ascr(\d)$ depending on $f$ and $h$
but not depending on $N$. By integrating the equation (\ref{eq:fN})
we see that the map $F_N(z,c)$ is of the form (\ref{eq:FN}) and the remainder 
$E_N(z,c)$ satisfies (\ref{eq:est-EN}) as is seen from the estimates (\ref{eq:estimate-gN}) and (\ref{eq:est-gN}).  
\end{proof}

By using Lemmas \ref{lem:RH3} and \ref{lem:RH} we can prove the following result which gives an approximate 
solution to the Riemann-Hilbert problem for bordered Riemann surfaces as null curves
in $\c^n$. The case $n=3$ corresponds to \cite[Theorem 3.4]{AF2}. 
The proof given there extends directly to the present situation, replacing 
\cite[Lemma 3.1]{AF2} with Lemma \ref{lem:RH} above.

%
%
%
%
\begin{theorem}[{\bf Riemann-Hilbert problem for null curves in $\c^n$}]\label{th:RH}
Fix an integer $n\ge 3$ and let $\Agot=\Agot^{n-1}\subset\c^n$ denote the null quadric (\ref{eq:Agot}).
Let $M$ be a compact bordered Riemann surface with boundary $bM\ne\emptyset$, and let $I_1,\ldots,I_k$ be a finite collection of  pairwise disjoint compact subarcs of $bM$ which are not connected components of $bM$. Choose a thin annular neighborhood  
$A\subset M$ of $bM$ and a smooth retraction $\rho\colon A\to bM$. Assume that
\begin{itemize}
\item $F\colon M \to\c^n$ is a null holomorphic immersion of class $\Ascr^1(M)$,
\item $\bu_1,\ldots,\bu_k \in \Agot_*=\Agot\setminus\{0\}$ are null vectors (the {\em direction vectors}), 
\item  $r \colon bM \to \r_+$ is a continuous  nonnegative function supported on  $I:=\bigcup_{i=1}^k I_i$, 
\item  $\sigma \colon I \times\overline{\d}\to\c$ is a function of class $\Cscr^1$ such that
for every $\zeta\in I$ the function $\cd\ni \xi \mapsto \sigma(\zeta,\xi)$ is holomorphic on $\d$,
$\sigma(\zeta,0)=0$, and the partial derivative $\frac{\di \sigma}{\di \xi}$ is nowhere vanishing on $I \times \cd$.
\end{itemize}
Consider the continuous map  $\varkappa\colon bM \times\overline{\d}\to\c^n$ given by
\[
	\varkappa(\zeta ,\xi)=\left\{\begin{array}{ll}
	F(\zeta );     & \zeta \in bM\setminus I \\    
	F(\zeta ) + r(\zeta) \sigma(\zeta,\xi) \bu_i; & \zeta \in I_i,\; i\in\{1,\ldots,k\}.
	\end{array}\right.
\]
Given a number $\epsilon>0$ there exists an arbitrarily small open neighborhood $\Omega\subset M$ of   
$I=\bigcup_{i=1}^k I_i$ and a null holomorphic immersion $G\colon M\to\c^n$ of class $\Ascr^1(M)$ 
satisfying the following properties:
\begin{enumerate}[\it i)]
\item $\dist(G(\zeta ),\varkappa(\zeta ,\t))<\epsilon$ for all $\zeta \in bM$.
\item $\dist(G(\zeta ),\varkappa(\rho(\zeta ),\overline{\d}))<\epsilon$ for all $\zeta \in \Omega$.
\item $G$ is $\epsilon$-close to $F$ in the $\Cscr^1$ topology on $M\setminus \Omega$.
\end{enumerate}
\end{theorem}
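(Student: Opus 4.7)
The plan is to reduce the global Riemann-Hilbert problem on $M$ to finitely many local problems on discs, each solved by Lemma \ref{lem:RH}, and then to paste the local modifications to $F$ on the rest of $M$ via the gluing technique for holomorphic sprays on Cartan pairs. This is essentially the scheme carried out for $n=3$ in \cite{AF2}, the only substantive change being that the disc-model Riemann-Hilbert step now uses Lemma \ref{lem:RH} instead of its $n=3$ predecessor.

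First I would shrink each $I_j$ slightly inside $bM$ and choose pairwise disjoint compact simply connected neighborhoods $D_j\subset M$ of $I_j$ (disjoint from all $I_i$ with $i\ne j$), together with biholomorphisms $D_j \cong \overline{\d}$ sending $I_j$ to a proper compact subarc of $\t$. Via these charts $F|_{D_j}$ becomes a null holomorphic disc $\cd\to\c^n$ of class $\Ascr^1$. For each direction vector $\bu_j\in \Agot_*$ I pick an auxiliary null vector $\bv_j\in\Agot_*$ with $\Theta(\bu_j,\bv_j)\ne 0$ and $\Theta(\bv_j,F'(\zeta))\ne 0$ on $D_j$. Since the set of admissible $\bv_j$ is the complement of a proper subvariety of $\Agot_*$, a generic choice works; if $F'|_{D_j}$ happened to be trivial along the axis $\bv_j$, one first approximates $F$ by a nondegenerate null immersion using the Mergelyan theorem for null curves (cf.\ \cite{AL1}), which makes the nondegeneracy condition \eqref{eq:nondeg2} generic.

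Second, Lemma \ref{lem:RH} applied to each $(F|_{D_j},\bu_j,\bv_j,r|_{I_j},\sigma|_{I_j\times\cd})$ yields a null disc $G_j\colon D_j\to\c^n$ satisfying the analogues of $i)$ and $ii)$ on $D_j$ and agreeing with $F$ up to error $\epsilon$ in $\Cscr^1$ on $D_j\setminus U_j$ for some small open neighborhood $U_j\subset D_j$ of $I_j$ (using clause $iv)$ of Lemma \ref{lem:RH}). We may freely choose $U_j$ to have smooth boundary in $M$ and to sit inside the annular neighborhood $A$ of $bM$.

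Third, and this is where the main obstruction lies, one must splice the discrete data $\{G_j\text{ on }D_j,\ F\text{ on }M\setminus\bigcup_j U_j\}$ into a single null holomorphic immersion $G\in \Ascr^1(M,\c^n)$. Following the strategy of \cite{DF2007,F2011,AF2}, one embeds $F$ and each $G_j$ into dominating and period-dominating holomorphic sprays of maps with values in $\Agot_*$ (sprays of the derivatives $\partial F/\theta$ and $\partial G_j/\theta$); the existence of such sprays uses that nondegenerate maps into $\Agot_*$ admit period-dominating families, see the discussion around \cite[Lemma 3.2]{AFL} and \cite[Lemma 5.1]{AF1}. Setting $A_j=\overline{U_j}$ and $B=\overline{M\setminus\bigcup_j U_j}$, the pairs $(A_j,B)$ form Cartan pairs in $M$, and the two sprays differ by a small transition map on $A_j\cap B$. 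A Cartan-type splitting on each intersection (a standard Cousin-I argument) writes this transition as a difference of invertible holomorphic maps close to the identity, yielding a glued spray $\widetilde f_w\colon M\to\Agot_*$ on a small parameter ball. Finally, period domination lets us choose a parameter value $w_0$ such that the $\c^n$-valued $1$-form $\widetilde f_{w_0}\,\theta$ has the same periods as $\partial F$; integrating it from a fixed base point produces the desired null immersion $G\in\Ascr^1(M,\c^n)$, and conditions $i)$–$iii)$ follow by construction from the local Lemma \ref{lem:RH} estimates plus the smallness of the gluing correction. The hard part is keeping the gluing small enough on $A_j\cap B$ so that the period-closing perturbation preserves $\Cscr^1$-closeness to $F$ on $B$; this is handled exactly as in \cite[proof of Theorem 3.4]{AF2}, and no new ingredient beyond Lemma \ref{lem:RH} is required to make the argument go through for arbitrary $n\ge 3$.
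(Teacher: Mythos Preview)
Your proposal is correct and follows essentially the same route as the paper: localize to discs $D_i\supset I_i$, apply Lemma \ref{lem:RH} on each, and glue via dominating, period-dominating sprays over Cartan pairs exactly as in \cite[Theorem 3.4]{AF2}. Two small remarks: the paper builds a single spray $f_w$ on all of $M$ \emph{first} and then applies a parametric version of Lemma \ref{lem:RH} to each restriction $f_w|_{D_i}$ (rather than constructing separate sprays around $F$ and the $G_j$ afterward), which makes closeness on the Cartan overlaps automatic; and you only arranged $\Theta(\bv_j,f)\ne 0$, whereas condition \eqref{eq:nondeg2} also demands $\Theta(\bu_j,f)\ne 0$, which the paper secures by a small perturbation of the given direction vectors $\bu_j$ (permissible since the conclusion is only approximate).
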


\begin{proof} 
Recall that $\Theta$ is the bilinear form (\ref{eq:bilinear}) on $\c^n$.
For each index $i\in \{1,\ldots,k\}$ we choose a null vector $\bv_i\in \Agot_*$
such that $\Theta(\bu_i,\bv_i)\ne 0$. Pick a holomorphic $1$-form $\theta$ without zeros on $M$. 
Then $dF=f\theta$ where $f\colon M\to \Agot_*$ is a map of class $\Ascr(M)$.
Deforming $F$ slightly if needed we may assume that $f$ is nondegenerate 
(see Sec.\ \ref{sec:prelim}). By a small deformation of the pairs of null vectors $(\bu_i,\bv_i)$  
we may also assume that the following conditions hold on each of the arcs $I_i$ (cf.\ (\ref{eq:nondeg2})):
\[
	\Theta(\bu_i,f(\zeta ))\ne 0\quad \text{and}\quad \Theta(\bv_i,f(\zeta ))\ne 0 \quad \text{for all } \zeta\ \in I_i.
\]
By continuity there is a neighborhood  $U_i\subset A\subset M$ of the arc $I_i$ such that 
the same conditions hold for all $\zeta \in U_i$.

By \cite[Lemma 5.1]{AF1} there exists a {\em spray of maps} $f_w\colon M\to \Agot_*$ of class $\Ascr(M)$, 
depending holomorphically on a parameter $w$ in a ball $B\subset \c^N$ 
for some big integer $N$ and satisfying the following properties:
\begin{itemize}
\item[\rm (a)] the spray is {\em dominating}, i.e., the partial differential 
$ 
	\frac{\di}{\di w}\big|_{w=0} f_w(\zeta ) \colon \c^N\to \c^n
$ 
is surjective for every $\zeta \in M$. 
\item[\rm (b)] The spray is {\em period dominating} in the following sense.
Let $C_1,\ldots, C_l$ be smooth closed curves in $\mathring M$ which form the basis
of the homology group $H_1(M;\z)$. Then the period map
$\Pcal=(\Pcal_1,\ldots,\Pcal_l)\colon B \to (\c^n)^l$ with the components
\[
	\Pcal_j(w) = \int_{C_j} f_w\theta \in \c^n, \quad w\in B,\ j=1,\ldots,l
\] 
has the property that $\frac{\di}{\di w}\big|_{w=0} \Pcal(w): \c^N\to (\c^n)^l$ is surjective.
\end{itemize}

For each $i=1,\ldots, l$ we pick a compact, smoothly bounded, simply connected domain
$D_i$ in $M$ (a disc) such that $D_i\subset U_i$, $D_i$ contains a neighborhood of the arc $I_i$
in $M$, and $D_i\cap D_j=\emptyset$ for $1\le i\ne j\le l$.
Since the curves $C_j$ lie in the interior of $M$, we can choose the $D_i$'s small enough such that
$\bigcup_{j=1}^l C_j \cap \bigcup_{i=1}^k D_i=\emptyset$. For every $i=1,\ldots,l$ 
the function $\sigma(\zeta,\xi)$ can be extended to $(\zeta,\xi) \in bD_i\times \cd$ such that conditions of 
Lemma \ref{lem:RH} are fulfilled on the disc $D_i$, 
and the function $r$ extends to $bD_i$ such that it vanishes on $bD_i\setminus I_i$.

Under these conditions we can apply Lemma \ref{lem:RH} on each disc $D_i$ 
to approximate the restricted spray $f_w|_{D_i}$ as closely as desired,
uniformly on $D_i\setminus V_i$ for a small neighborhood $V_i\subset D_i$ of the arc $I_i$, 
by a holomorphic spray $g_{i,w}\colon D_i\to\Agot_*$ of class $\Ascr(D_i)$ such that the integrals
$G_{i,w}(\zeta ) = \int^\zeta  g_{i,w}\theta$ $(\zeta \in D_i,\ w\in B)$ 
with suitably chosen initial values at some point $p_i\in D_i$ satisfy the conclusion of Lemma \ref{lem:RH}.
(To be precise, we need a parametric version of Lemma \ref{lem:RH} with a holomorphic dependence
on the parameter $w$ of the spray. It is clear that the proof of Lemma \ref{lem:RH} 
gives this without any changes.)

Assuming that these approximations are close enough, the domination property (a) of the 
spray $f_{w}$ allow us to glue the sprays $f_w$ and $g_{i,w}$ for $i=1,\ldots, l$
into a new holomorphic spray $\tilde g_w\colon M\to \Agot_*$  which approximates 
$f_w$ very closely on $M\setminus \bigcup_{i=1}^l V_i$. (The parameter ball $B\subset\c^N$ 
shrinks a little. For the details and references regarding this gluing see  \cite[Theorem 3.4]{AF2}.) 
The period domination property of $f_w$ (condition (b) above) implies
that there exists  $w_0\in B$ close to $0$ such that the map
$g=\tilde g_{w_0}\colon M\to \Agot_*$ has vanishing periods over the curves $C_j$.
The holomorphic map $G\colon M\to \c^n$, defined by 
$G(\zeta ) = F(p)+\int_p^\zeta  g\theta$ $(\zeta \in M$)  for a fixed initial point $p\in M$, 
is then a null holomorphic immersion satisfying Theorem \ref{th:RH} provided that all 
approximations were sufficiently close.
\end{proof}

By adapting Theorem \ref{th:RH} to  conformal minimal immersions we obtain the following. 

%
%
%
%
\begin{theorem}[\bf Riemann-Hilbert problem for conformal minimal immersions in $\r^n$] 
\label{th:RHCMI}
Assume that $n\ge 3$ and the data $M$,  $I_1,\ldots,I_k\subset bM$, $I=\bigcup_{i=1}^k I_i$, $r \colon bM \to \r_+$,
$\sigma\colon I \times \cd\to\c$, $A\subset M$ and $\rho\colon A\to bM$ are as in Theorem \ref{th:RH}.
Let $F\in\CMI^1(M,\r^n)$. For each $i=1,\ldots, k$ let $\bu_i,\bv_i\in \r^n$ be a pair of orthogonal vectors 
satisfying $\|\bu_i\|=\|\bv_i\|>0$. Consider the continuous map  
$\varkappa\colon bM \times\overline{\d}\to\r^n$ given by 
\[
	\varkappa(\zeta ,\xi)=\left\{\begin{array}{ll}
	F(\zeta ),  & \zeta \in bM\setminus I; \\ 
	F(\zeta ) + r(\zeta )\bigl( \Re \sigma(\zeta,\xi) \bu_i+ \Im \sigma(\zeta,\xi)  \bv_i \bigr), & 
	\zeta \in I_i,\; i\in\{1,\ldots,k\}.
	\end{array}\right.
\]
Given a number $\epsilon>0$ there exist an arbitrarily small open neighborhood $\Omega\subset M$ of   
$I=\bigcup_{i=1}^k I_i$ and a  conformal minimal immersion $G\in\CMI^1(M,\r^n)$
satisfying  the following properties:
\begin{enumerate}[\it i)]
\item $\dist(G(\zeta ),\varkappa(\zeta ,\t))<\epsilon$ for all $\zeta \in bM$.
\item $\dist(G(\zeta ),\varkappa(\rho(\zeta ),\overline{\d}))<\epsilon$ for all $\zeta \in \Omega$.
\item $G$ is $\epsilon$-close to $F$ in the $\Cscr^1$ norm on $M\setminus \Omega$.
\item $\Flux (G)=\Flux (F)$.
\end{enumerate}
\end{theorem}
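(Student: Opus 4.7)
The plan is to reduce Theorem \ref{th:RHCMI} to Theorem \ref{th:RH} by identifying conformal minimal immersions $M\to\r^n$ with real parts of null holomorphic curves $M\to\c^n$. The key algebraic observation is that, for each index $i$, the conditions that $\bu_i,\bv_i\in\r^n$ are orthogonal with common positive norm are exactly equivalent to
\[
\bw_i:=\bu_i-\imath\bv_i\in\Agot_*,
\]
since $\Theta(\bw_i,\bw_i)=\|\bu_i\|^2-\|\bv_i\|^2-2\imath\langle\bu_i,\bv_i\rangle$. Moreover, for every $s\in\c$ one has $\Re(s\bw_i)=\Re(s)\,\bu_i+\Im(s)\,\bv_i$, so on each $I_i\times\cd$ the datum $\varkappa$ is the real part of $F_\c(\zeta)+r(\zeta)\sigma(\zeta,\xi)\,\bw_i$ for any local null holomorphic lift $F_\c$ of $F$. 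This turns the prescribed real Riemann--Hilbert problem into the complex one of Theorem \ref{th:RH} with direction vectors $\bw_i\in\Agot_*$, provided one can produce a null lift with the correct global periods.

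To globalize, I would fix a nowhere vanishing holomorphic $1$-form $\theta$ on $M$, write $2\di F=f\theta$ with $f\colon M\to\Agot_*$ in $\Ascr(M)$, and, by \cite[Theorem 3.1]{AFL}, replace $f$ with a nondegenerate $\Cscr^1$-approximant. Following \cite[Lemma 5.1]{AF1}, construct a holomorphic spray $\{f_w\}_{w\in B}$ with $B\subset\c^N$ a ball, core $f_0=f$, that is dominating on $M$ and period-dominating on a homology basis $C_1,\dots,C_l$ of $M$ in the $\c^n$-valued sense, i.e.\ the derivative at $0$ of
\[
\Pcal(w)=\Bigl(\int_{C_1}f_w\theta,\dots,\int_{C_l}f_w\theta\Bigr)\in(\c^n)^l
\]
is surjective. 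Because $\Pcal$ takes values in $(\c^n)^l$, this simultaneously controls the real periods (whose vanishing makes $\Re\int f_w\theta$ single-valued on $M$) and the imaginary periods (which compute the flux).

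Next, pick pairwise disjoint smoothly bounded, simply connected discs $D_1,\dots,D_k\subset M$ with $I_i\subset D_i$ and $D_i\cap\bigcup_j C_j=\emptyset$. After a small perturbation of the pairs $(\bu_i,\bv_i)$ that preserves orthogonality and equal norm (hence keeps $\bw_i\in\Agot_*$), assume $\Theta(\bw_i,f(\zeta))\ne 0$ on $D_i$, and choose a companion null vector $\bw'_i\in\Agot_*$ with $\Theta(\bw_i,\bw'_i)\ne 0$ and $\Theta(\bw'_i,f(\zeta))\ne 0$ on $D_i$. Simple connectedness of $D_i$ allows integration of $f_w\theta$ to a parametric null disc $F_{\c,w,i}\colon D_i\to\c^n$. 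Apply the parametric version of Lemma \ref{lem:RH} inside each $D_i$ with direction vector $\bw_i$, size function $r$ and kernel $\sigma$ to modify $f_w|_{D_i}$ into a holomorphic spray $g_{i,w}\colon D_i\to\Agot_*$ that is close to $f_w|_{D_i}$ off a small neighborhood $V_i$ of $I_i$ and whose primitive realizes the complex boundary datum $F_{\c,w,i}(\zeta)+r(\zeta)\sigma(\zeta,\xi)\bw_i$ to accuracy $\epsilon$ in the senses (i)--(iii) of Lemma \ref{lem:RH}.

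Finally, glue $f_w$ with the $g_{i,w}$'s, exactly as in the proof of Theorem \ref{th:RH}, into a dominating holomorphic spray $\tilde g_w\colon M\to\Agot_*$ on a possibly smaller parameter ball, with $\tilde g_w$ close to $f_w$ on $M\setminus\bigcup V_i$ and close to $g_{i,w}$ on each $D_i$. Period-domination furnishes $w_0\in B$ near $0$ with $\int_{C_j}\tilde g_{w_0}\theta=\int_{C_j}f\theta$ for all $j$, whence $G(p):=F(p_0)+\Re\int_{p_0}^p\tilde g_{w_0}\theta$ is a well-defined element of $\CMI^1(M,\r^n)$ with $\Flux(G)=\Flux(F)$, and properties (i)--(iii) follow from the corresponding complex estimates by taking real parts. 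The main point requiring care, rather than a genuine obstacle, is the simultaneous control of the $2ln$ real period conditions needed to guarantee well-definedness together with flux preservation; this is already built into the $\c^n$-valued period-domination of \cite[Lemma 5.1]{AF1}, so no new ingredient beyond Theorem \ref{th:RH} is required.
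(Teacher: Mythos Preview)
Your proposal is correct and follows essentially the same approach as the paper: you reduce to the null-curve case via the null vectors $\bw_i=\bu_i-\imath\bv_i\in\Agot_*$, rerun the proof of Theorem \ref{th:RH} on the derivative $f=\partial F/\theta$, and at the period step match the full complex periods $\int_{C_j}\tilde g_{w_0}\theta=\int_{C_j}f\theta$ rather than merely annihilating them, which simultaneously yields well-definedness of $G=F(p_0)+\Re\int\tilde g_{w_0}\theta$ and $\Flux(G)=\Flux(F)$. The paper's proof is terser but identical in substance; your remark that $\c^n$-valued period-domination handles all $2ln$ real conditions at once is exactly the point the paper leaves implicit.
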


\begin{proof}
The conditions on $\bu_i,\bv_i\in\r^n$ imply that 
$\wt \bu_i=\bu_i-\imath \bv_i\in\Agot_*$ is a null vector for every $i=1,\ldots,k$. 
Pick a holomorphic $1$-form $\theta$ without zeros on $M$. Then
$\di F=f\theta$ where $f\colon M\to \Agot_*$ is a holomorphic map of class $\Ascr(M)$. 
We apply the proof of Theorem \ref{th:RH} to the map $f$
and the null vectors $\wt \bu_i$, but with the following difference.
At the very last step of the proof we can argue that we obtain a holomorphic 
map $g=\tilde g_{w_0}\colon M\to \Agot_*$ in the new spray 
(for some value of the parameter $w_0\in \c^N$ close to $0$) such that
\[
	\int_{C_j} g\theta = \int_{C_j} f\theta = \int_{C_j}\di F;  \quad j=1,\ldots,l.
\] 
Since $\int_{C_j}2\Re (\di F) = \int_{C_j} dF=0$, the real periods of $g$ vanish 
and the imaginary periods equal those of $f$. Hence the map  $G\colon M\to\r^n$,
given by $G(\zeta )=F(p_0)+\int_{p_0}^\zeta  \Re(g\theta)$ 
for some fixed $p_0\in M$, is a conformal minimal immersion with $\Flux(G)=\Flux(F)$
(property {\it iv)}). Properties  {\it i)--iii)} of $G$ follow  from 
the corresponding properties of the map $\int^\zeta  g\theta$ 
on each disc $D_i\subset M$ constructed in the proof of Theorem \ref{th:RH}.
(In fact, with a correct choice of initial values we have $G(\zeta ) =\int^\zeta  \Re(g\theta)$ 
for $\zeta \in D_i$, $i=1,\ldots,k$.)
\end{proof}


\section{Complete minimal surfaces bounded by Jordan curves}\label{sec:Jordan}

In this section we prove Theorem \ref{th:Jordan}. The key in the proof  is the following lemma
which asserts that every conformal minimal immersion $M\to\r^n$ of a compact bordered Riemann surface
can be approximated as close as desired in the $\Cscr^0(M)$ topology by conformal minimal immersions 
with arbitrarily large intrinsic diameter.

\begin{lemma}\label{lem:Jordan2}
Let $M$ be a compact bordered Riemann surface, let $n\geq 3$ be a natural number, and let $G\in\CMI_*^1(M,\r^n)$. 
Given a point $p_0\in \mathring M$ and a number $\lambda>0$, we can approximate $G$ arbitrarily closely 
in the $\Cscr^0(M)$ topology by a conformal minimal immersion $\wh G\in\CMI_*^1(M,\r^n)$ such that 
$\dist_{\wh G}(p_0,bM)>\lambda$ and $\Flux(\wh G)=\Flux(G)$.
\end{lemma}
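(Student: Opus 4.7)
The plan is to establish this lemma by finite iteration of the Riemann--Hilbert theorem (Theorem~\ref{th:RHCMI}) along a nested exhaustion of $M$, in the spirit of the scheme of \cite[Sec.\ 5]{AF2} for $n=3$. The idea is to concentrate each RH perturbation in a disjoint annular collar of a sequence of interior smoothly bounded subdomains: disjointness of supports keeps the cumulative $\Cscr^0$ error bounded by a single step's displacement, while intrinsic distance gains stack up across the annular crossings.

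Choose a large integer $N > 8\lambda/\epsilon$, set $\delta = \epsilon/2$, and pick a nested sequence $p_0 \in \mathring M_0 \Subset M_0 \Subset M_1 \Subset \cdots \Subset M_N \subset M$ of smoothly bounded compact Runge subdomains with very thin annular differences $A_k = M_k \setminus \mathring M_{k-1}$ and $M \setminus M_N$ an arbitrarily thin collar of $bM$. I would build inductively $G_0 = G, G_1, \ldots, G_N = \wh G \in \CMI_*^1(M, \r^n)$, each step $G_{k-1} \to G_k$ satisfying $\Flux(G_k) = \Flux(G)$, $\|G_k - G_{k-1}\|_{\Cscr^0(M)} < \delta$ with $G_k = G_{k-1}$ outside $A_k$ up to arbitrarily small error, and the key intrinsic length invariant $\dist_{G_k}(p_0, bM_k) \geq \dist_{G_{k-1}}(p_0, bM_{k-1}) + \delta/4$. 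For the inductive step, view $M_k$ as a bordered Riemann surface and apply Theorem~\ref{th:RHCMI} to $G_{k-1}|_{M_k}$: cover $bM_k$ by pairwise disjoint compact subarcs $I_1, \ldots, I_L$ with arbitrarily small gaps, choose orthogonal direction pairs $(\bu_j, \bv_j) \in \r^n$ of common norm $\delta$, take $r \equiv 1$ on $\bigcup I_j$ and $\sigma(\zeta, \xi) = \xi$. This produces $\tilde G_k \in \CMI^1(M_k, \r^n)$ with $\Flux(\tilde G_k) = \Flux(G_{k-1})$, $\|\tilde G_k - G_{k-1}\|_{\Cscr^0(M_k)} < \delta$, and boundary image $\tilde G_k(bM_k)$ in an arbitrarily small neighborhood of the circles $\varkappa(\zeta, \t)$ lying in $G_{k-1}(\zeta) + \mathrm{span}(\bu_j, \bv_j)$ at Euclidean distance $\delta$ from $G_{k-1}(\zeta)$, while $\tilde G_k$ remains $\Cscr^1$-close to $G_{k-1}$ on $M_k$ away from a thin neighborhood $\Omega_k \subset A_k$ of $\bigcup I_j$. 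I would then glue $\tilde G_k$ on $M_k$ with $G_{k-1}$ on $M \setminus \mathring M_k$ using the Cartan-pair gluing method for period-dominating holomorphic sprays \cite{DF2007, F2011} applied to $\di G_k/\theta \colon M \to \Agot_*$, followed by Mergelyan approximation \cite[Theorem 5.3]{AFL}, obtaining $G_k \in \CMI_*^1(M, \r^n)$ with the required properties.

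The per-step length estimate uses that $\tilde G_k \approx G_{k-1}$ on $bM_{k-1}$ (which lies outside $\Omega_k$), while $\tilde G_k(bM_k)$ is displaced from $G_{k-1}(bM_k)$ by $\delta$ in the $(\bu_j, \bv_j)$-plane: any path in $M_k$ from $q \in bM_{k-1}$ to $\zeta \in bM_k$ has $G_k$-length at least the Euclidean distance $|G_k(q) - G_k(\zeta)| \geq \delta - |G_{k-1}(q) - G_{k-1}(\zeta)| \geq \delta/2$, where the last bound is secured by choosing $A_k$ so thin that the $G_{k-1}$-image diameter of $A_k$ is below $\delta/2$. Iterating $N$ times yields $\dist_{\wh G}(p_0, bM_N) \geq N\delta/4 > 2\lambda$; since $M \setminus M_N$ is an arbitrarily thin Euclidean collar on which $\wh G = G$ is uniformly continuous and bounded, we conclude $\dist_{\wh G}(p_0, bM) > \lambda$, while disjointness of supports keeps $\|\wh G - G\|_{\Cscr^0(M)} < \delta < \epsilon$. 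The principal obstacle is the gluing step: extending the RH perturbation $\tilde G_k$ on $M_k$ to a nondegenerate conformal minimal immersion on all of $M$ matching $G_{k-1}$ outside $M_k$ while preserving the flux. This is handled by the Cartan-pair gluing of period-dominating holomorphic sprays from \cite{DF2007, F2011} applied to the lifted null-quadric maps, combined with Mergelyan approximation on the Runge pair $M_k \subset M$; it is a standard but technically delicate tool in the Alarc\'on--Forstneri\v c framework that must be adapted here to simultaneously control flux, nondegeneracy, and $\Cscr^0$ closeness.
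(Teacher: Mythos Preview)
Your length estimate has a genuine gap. Theorem~\ref{th:RHCMI} requires the arcs $I_1,\ldots,I_L\subset bM_k$ to be proper subarcs (not full components), and the continuous size function $r$ to be supported on $\bigcup_j I_j$; in particular $r$ must vanish on the nonempty complement $bM_k\setminus\bigcup_j I_j$ and taper to zero near the endpoints of each $I_j$. So you cannot take $r\equiv 1$ on $\bigcup_j I_j$, and for $\zeta$ in a gap (or near an endpoint) the Riemann--Hilbert output satisfies $G_k(\zeta)\approx G_{k-1}(\zeta)$. Your key inequality $|G_k(q)-G_k(\zeta)|\ge \delta-|G_{k-1}(q)-G_{k-1}(\zeta)|$ implicitly assumes $|G_k(\zeta)-G_{k-1}(\zeta)|\approx\delta$, which fails precisely at those points. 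Since $\dist_{G_k}(p_0,bM_k)$ is an infimum over \emph{all} paths, a path from $p_0$ that crosses $bM_k$ at a gap point yields no length gain in $A_k$, and the inductive invariant $\dist_{G_k}(p_0,bM_k)\ge \dist_{G_{k-1}}(p_0,bM_{k-1})+\delta/4$ breaks down. Making the gaps ``arbitrarily small'' does not help: there is nothing to prevent a path from threading every gap.

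This is exactly the difficulty that the paper's Lemma~\ref{lem:Jordan1} is designed to overcome, and it does so by a two-stage construction rather than a single Riemann--Hilbert step. First (Lemma~\ref{lem:step1}) one uses Mergelyan approximation together with the Forstneri\v c--Wold technique of exposing boundary points \cite{FW0} to produce an intermediate immersion $F_0$ for which every path from $K$ into a small neighborhood $U_{i,j}$ of each of finitely many boundary points $p_{i,j}$ already has length $>\eta$ (property (P4), measured via projections $\pi_{a,b}$). Only then is Theorem~\ref{th:RHCMI} applied, on arcs $I_{i,j}$ whose endpoints lie \emph{inside} the neighborhoods $U_{i,j-1},U_{i,j}$. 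Now any path to $bM$ either enters some $U_{i,j}$ (length gain from stage one) or terminates on the subarc $\beta_{i,j}\Subset I_{i,j}$ where $r\equiv\eta$ is at full strength (length gain from stage two). Your scheme has only the second stage and no mechanism to handle the gaps. The paper then iterates Lemma~\ref{lem:Jordan1} with step sizes $\eta_j=c/j$, using the Pythagorean bound $\|\wh F-\Ygot\|_{0,bM}<\sqrt{\delta^2+\eta^2}$ (obtained by pushing orthogonally to $F-\Ygot$) so that $\sum\eta_j=\infty$ while $\sqrt{\delta_0^2+\sum\eta_j^2}$ stays below~$\epsilon$; your disjoint-supports idea is an interesting alternative for controlling the $\Cscr^0$ error, but it does not survive without a fix for the gap problem.
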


The notation $\CMI_*^1(M)$ has been introduced in Sec.\ \ref{sec:prelim}. Since conformal minimal immersions are 
harmonic, the approximation in the above lemma takes place in the $\Cscr^r$ topology on compact subsets of $\mathring M$ 
for all $r\in\z_+$. However, if $\lambda>\dist_G(p_0,bM)$ then the approximation in the $\Cscr^1(M)$ topology is clearly impossible. 


Lemma \ref{lem:Jordan2} will follow by a standard recursive application of the following result.

\begin{lemma}\label{lem:Jordan1}
Let $M$ be a compact bordered Riemann surface and let $n\geq 3$. 
Consider $F\in\CMI^1(M,\r^n)$, a smooth map $\Ygot\colon bM\to\r^n$, and a number $\delta>0$ such that
\begin{equation}\label{eq:lemJordan1}
\|F-\Ygot\|_{0,bM}<\delta. 
\end{equation}
Fix a point $p_0\in \mathring M$ and choose a number  $d>0$ such that
\begin{equation}\label{eq:d}
	0<d<\dist_F(p_0,bM).
\end{equation}
Then for each $\eta>0$ the map $F$ can be approximated uniformly on compacts in $\mathring M$ by nondegenerate
conformal minimal immersions $\wh F\in\CMI_*^1(M,\r^n)$ satisfying the following properties:
\begin{enumerate}[\rm (a)]
\item $\|\wh F-\Ygot\|_{0,bM}<\sqrt{\delta^2+\eta^2}$.
\item $\dist_{\wh F}(p_0,bM)> d+\eta$.
\item $\Flux(\wh F)=\Flux(F)$.
\end{enumerate}
\end{lemma}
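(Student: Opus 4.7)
The natural approach is to apply the Riemann-Hilbert theorem for conformal minimal immersions (Theorem \ref{th:RHCMI}) once, pushing the boundary image $F(bM)$ outward by $\approx\eta$ in directions \emph{orthogonal} to the error vector $F-\Ygot$. Properties (a) and (c) should then follow from the Pythagorean theorem and from the flux-preserving conclusion (iv) of Theorem \ref{th:RHCMI} respectively, while (b) is the genuine technical step.

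Concretely, cover $bM$ up to arbitrarily short complementary gaps by finitely many pairwise disjoint closed subarcs $I_1,\dots,I_k$, chosen short enough that the unit vector in the direction of $F(\zeta)-\Ygot(\zeta)$ varies very little on each $I_j$. Since $n\ge 3$, the orthogonal complement of that direction contains an orthonormal pair $\bu_j,\bv_j\in\r^n$ with $\|\bu_j\|=\|\bv_j\|=1$. Apply Theorem \ref{th:RHCMI} with $\sigma(\zeta,\xi)=\xi$, with these prescribed direction vectors, and with a continuous size function $r\colon bM\to[0,\eta]$ equal to $\eta$ on the bulk of each $I_j$ and tapering smoothly to zero at the endpoints. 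This produces $\wh F\in\CMI^1(M,\r^n)$ with $\Flux(\wh F)=\Flux(F)$ and $\Cscr^1$-error at most $\epsilon$ between $\wh F$ and $F$ outside a thin preassigned neighborhood $\Omega$ of $I=\bigcup_j I_j$; a generic small perturbation using the density result \cite[Theorem 3.1]{AFL} also ensures that $\wh F$ is nondegenerate on every component of $M$.

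For (a), property (i) of Theorem \ref{th:RHCMI} gives, for each $\zeta\in bM$, some $\xi\in\t$ with
\[
    \wh F(\zeta)\approx\varkappa(\zeta,\xi)=F(\zeta)+r(\zeta)\bigl(\Re\xi\,\bu_j+\Im\xi\,\bv_j\bigr).
\]
The added displacement has norm $r(\zeta)\le\eta$ and (up to the small variation of the direction vector on $I_j$) lies in the plane orthogonal to $F(\zeta)-\Ygot(\zeta)$, so Pythagoras yields $|\wh F(\zeta)-\Ygot(\zeta)|^2\le|F(\zeta)-\Ygot(\zeta)|^2+\eta^2+O(\epsilon)<\delta^2+\eta^2$ by \eqref{eq:lemJordan1} and a sufficiently small choice of $\epsilon$.

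The hardest step is (b). Exploiting the strict inequality $d<\dist_F(p_0,bM)$, pick a compact neighborhood $K\subset\mathring M$ of $p_0$, disjoint from $\Omega$, with $\dist_F(p_0,bK)$ arbitrarily close to $\dist_F(p_0,bM)$. For any path $\gamma$ in $M$ from $p_0$ to $\zeta\in bM$, let $t_1$ be the last time with $\gamma(t_1)\in K$. Property (iii) of Theorem \ref{th:RHCMI} gives $\wh F\approx F$ in $\Cscr^1$ on $K$, so
\[
	\ell(\wh F\circ\gamma|_{[0,t_1]})\ge(1-\epsilon)\,\dist_F(p_0,bK),
\]
which can be arranged to exceed $d$. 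For the final portion, when $r(\zeta)\approx\eta$ one has $\wh F(\zeta)\approx F(\zeta)+\eta\vec w$ with a unit vector $\vec w$ in the plane of $\bu_j,\bv_j$, and the Euclidean endpoint inequality combined with $\wh F(\gamma(t_1))\approx F(\gamma(t_1))$ contributes an extra increment close to $\eta$, essentially orthogonal to the residual $F(\gamma(t_1))-F(\zeta)$ whenever $\gamma(t_1)$ is $M$-close to $\zeta$; otherwise $\gamma|_{[t_1,1]}$ already has large $F$-length. The main obstacle I expect is that $r$ must vanish on the finitely many short gap arcs, where $\wh F\approx F$ and no direct push occurs; this has to be controlled by keeping the gaps extremely short, by exploiting that the $\wh F$-metric on $\Omega$ is \emph{strictly larger} than the $F$-metric (due to the high-degree factor $z^{2N+1}$ in the construction of Theorem \ref{th:RH}, which inflates $\|d\wh F\|$ near the boundary), and by absorbing the resulting small errors with the slack $\dist_F(p_0,bM)-d>0$. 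A standard iteration of the lemma in the proof of Lemma \ref{lem:Jordan2}, with step sizes $\eta_i$ satisfying $\sum\eta_i=\infty$ and $\sum\eta_i^2<\infty$, then drives the intrinsic boundary distance to infinity while keeping the boundary displacement bounded.
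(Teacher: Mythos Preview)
Your approach captures the right idea for the Riemann--Hilbert push and for properties (a) and (c), but there is a genuine gap in your argument for (b) at the boundary points where the size function $r$ is forced to vanish.

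Theorem \ref{th:RHCMI} requires the $I_j$ to be proper subarcs of the boundary components, so $r$ must taper to zero at their endpoints. Consider a path $\gamma$ from $p_0$ to a point $\zeta\in bM$ lying in one of the short gap arcs. Since $\Omega$ is only an (arbitrarily small) neighborhood of $\bigcup_j I_j$, such a $\gamma$ can be taken to avoid $\Omega$ entirely; then property \textit{iii)} of Theorem \ref{th:RHCMI} gives $\wh F\approx F$ in $\Cscr^1$ along all of $\gamma$, so $\length(\wh F\circ\gamma)\approx\length(F\circ\gamma)$, which is only guaranteed to exceed $d$, not $d+\eta$. None of your proposed fixes closes this: making the gaps short does not prevent such paths; the assertion that the $\wh F$-metric is strictly larger on $\Omega$ is not something Theorem \ref{th:RHCMI} delivers (and it fails near the endpoints of the $I_j$, where the perturbation vanishes); and the slack $\dist_F(p_0,bM)-d$ is a fixed number while $\eta$ is arbitrary.

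The paper handles exactly this obstruction with an essential preliminary step (Lemma \ref{lem:step1}) carried out \emph{before} the Riemann--Hilbert push. One attaches short arcs $\gamma_{i,j}\subset\wt M$ to $M$ at the designated gap points $p_{i,j}$, extends $F$ over each $\gamma_{i,j}$ by a highly oscillatory map of small extrinsic diameter but large length in all the relevant projected directions $\pi_{a,b}$, applies the Mergelyan theorem for conformal minimal immersions, and then uses the Forstneri\v c--Wold technique of exposing boundary points to pull these arcs back into $M$. The resulting immersion $F_0$ has the property that any path from $K$ to a small neighborhood $U_{i,j}$ of $p_{i,j}$ already has the extra projected length $>\eta$. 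Only then is Theorem \ref{th:RHCMI} applied, on arcs $I_{i,j}$ whose endpoints lie inside the $U_{i,j}$, so that every path from $bK$ to $bM$ either ends in some $U_{i,j}$ (handled by Lemma \ref{lem:step1}) or ends at a point where $r=\eta$ inside some $\Omega_{i,j}$ (handled by an endpoint estimate). For the latter estimate the paper compares $\wh F$ at the terminal point $p\in\beta_{i,j}$ with $\wh F$ at the last exit point $q\in b\Omega_{i,j}\setminus\Omega$ (where $F_0(q)$ is close to $F_0(p)$ via the retraction $\rho$), not at a point of $bK$ as you suggest; your orthogonality claim for $\gamma(t_1)\in bK$ is not justified.
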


The key idea in the proof of Lemma \ref{lem:Jordan1} is to push the $F$-image of each point $p\in bM$ 
a distance approximately $\eta$ in a direction approximately orthogonal to $F(p)-\Ygot(p)\in\r^n$. 
Conditions {\rm (a)} and {\rm (b)} will then follow from \eqref{eq:lemJordan1}, \eqref{eq:d}, and Pythagoras' Theorem.

The main improvement of Lemmas \ref{lem:Jordan2} and \ref{lem:Jordan1} with respect to previous related results in the literature 
is that we do not change the conformal structure on the source bordered Riemann surface $M$. This particular point is the key that 
allows us to ensure that the complete minimal surfaces constructed in Theorem \ref{th:Jordan} are bounded by Jordan curves.

\begin{proof}[Proof of Lemma \ref{lem:Jordan1}]
We assume that $M$ is a smoothly bounded compact domain in an open Riemann surface $\wt M$. Furthermore, 
in view of the Mergelyan theorem for conformal minimal immersions into $\r^n$ 
(Theorems 3.1 and 5.3 in \cite{AFL}) we may also assume that $F$ extends to $\wt M$ 
as a conformal minimal immersion in $\CMI_*(\wt M,\r^n)$.

Fix a number $\epsilon>0$ and a compact set $K\Subset M$. 
To prove the lemma, we will construct a nondegenerate conformal minimal immersion 
$\wh F\in\CMI_*^1(M,\r^n)$  which is $\epsilon$-close to $F$ in the $\Cscr^1(K)$ norm and 
satisfies conditions {\rm (a)}, {\rm (b)}, and {\rm (c)}.

Enlarging $K$ if necessary we assume that $K$ is a smoothly bounded compact domain in $\mathring M$ 
which is a strong deformation retract of $M$, that $p_0\in\mathring K$, and (see \eqref{eq:d}) that
\begin{equation}\label{eq:>tau}
	\dist_F(p_0,bK)>d.
\end{equation}
By general position we may also assume  that 
\begin{equation}\label{eq:neq}
	\text{the map $F-\Ygot$ does not vanish anywhere on $bM$.}
\end{equation}

Denote by $\alpha_1,\ldots,\alpha_k$ the connected components of $bM$
and recall that every $\alpha_i$ is a smooth Jordan curve in $\wt M$. 

Fix a number $\epsilon_0>0$ which will be specified later. 

By \eqref{eq:lemJordan1} and the continuity of $F$ and $\Ygot$ there exist a natural number $l\geq 3$ and compact connected 
subarcs $\{\alpha_{i,j}\subset\alpha_i\colon (i,j)\in \ttI:=\{1,\ldots,k\}\times\z_l\}$ (here $\z_l=\z/l\z$) such that
\begin{equation}\label{eq:alpha}
	\bigcup_{j\in\z_l}\alpha_{i,j}=\alpha_i,  \quad i\in\{1,\ldots,k\},
\end{equation} 
and, for every $(i,j)\in \ttI$, 
the arcs $\alpha_{i,j}$ and $\alpha_{i,j+1}$ have a common endpoint $p_{i,j}$ and are otherwise disjoint,
$\alpha_{i,j}\cap\alpha_{i,a}=\emptyset$ for all $a\in\z_l\setminus\{j-1,j,j+1\}$,
\begin{equation}\label{eq:Y-Y}
	\text{$\|\Ygot(p)-\Ygot(q)\|<\epsilon_0$ for all $\{p,q\}\subset \alpha_{i,j}$,}
\end{equation}
and 
\begin{equation}\label{eq:F-Yalpha}
	\text{$\|F(p)-\Ygot(q)\|<\delta$ and $\|F(p)-F(q)\|<\epsilon_0$ for all $\{p,q\}\subset \alpha_{i,j}$}.
\end{equation}

For every $(i,j)\in \ttI$ we denote by 
\begin{equation}\label{eq:piij}
	\pi_{i,j}\colon\r^n\to {\rm span}\bigl\{F(p_{i,j})-\Ygot(p_{i,j})\bigl\}\, \subset\r^n
\end{equation}
the orthogonal projection onto the affine real line ${\rm span}\{F(p_{i,j})-\Ygot(p_{i,j})\}$; cf.\ \eqref{eq:neq}.

The first main step in the proof consists of perturbing $F$ near the points $\{p_{i,j}\colon (i,j)\in \ttI\}$ in order to find a conformal 
minimal immersion in $\CMI_*^1(M,\r^n)$ which is close to $F$ in the $\Cscr^1(K)$ topology and the distance between $p_0$ and 
$\{p_{i,j}\colon (i,j)\in \ttI\}$ in the induced metric is large in a suitable way. This deformation procedure is enclosed in the following result.

\begin{lemma}\label{lem:step1}
Given a number $\epsilon_1>0$, there exists a nondegenerate conformal minimal immersion 
$F_0\in\CMI_*^1(M,\r^n)$ satisfying the following properties:
\begin{enumerate}[\rm ({P}1)]
\item $F_0$ is $\epsilon_1$-close to $F$ in the $\Cscr^1(K)$ topology.
\item $\|F_0(p)-\Ygot(q)\|<\delta$ and $\|F_0(p)-F(q)\|<\epsilon_0$ for all $\{p,q\}\subset\alpha_{i,j}$, 
for all $(i,j)\in \ttI$.
\item $\Flux(F_0)=\Flux(F)$.
\item For every $(i,j)\in I$ there exists a small open neighborhood $U_{i,j}$ of $p_{i,j}$ in $M$, with $\overline U_{i,j}\cap K=\emptyset$, fulfilling the following condition: If $\gamma\subset M$ is an arc with initial point in $K$ and final point in $\overline U_{i,j}$, 
and if $\{J_{a,b}\}_{(a,b)\in \ttI}$ is any partition of $\gamma$ by Borel measurable subsets, then
\[
	\sum_{(a,b)\in \ttI} \length \, \pi_{a,b}(F_0(J_{a,b})) >\eta,
\]
where $\eta>0$ is the real number given in the statement of Lemma \ref{lem:Jordan1} and $\pi_{a,b}$ are the 
projections in \eqref{eq:piij}, $(a,b)\in \ttI$.
\end{enumerate}
\end{lemma}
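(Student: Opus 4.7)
My plan is to produce $F_0$ via localized perturbations of $F$, one in a small neighborhood of each corner point $p_{i,j}$, introducing a long ``spike'' in $\r^n$ along an interior arc approaching $p_{i,j}$ while leaving $F$ essentially unchanged elsewhere. To this end I first fix for each $(i,j)\in\ttI$ a unit vector $\bw_{i,j}\in\r^n$ in generic position, namely such that
\[
m := \min_{(i,j),(a,b)\in\ttI} \left| \left\langle \bw_{i,j},\, \frac{F(p_{a,b})-\Ygot(p_{a,b})}{\|F(p_{a,b})-\Ygot(p_{a,b})\|}\right\rangle\right| > 0,
\]
which is possible because only finitely many hyperplane conditions in $\r^n$ ($n\ge 3$) must be avoided. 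I then pick pairwise disjoint compact simply connected neighborhoods $\overline U_{i,j}\Subset W_{i,j}\Subset M\setminus K$ of $p_{i,j}$ and, in each $W_{i,j}$, a smooth embedded arc $\beta_{i,j}$ from a point of $\partial W_{i,j}\cap\mathring M$ to $p_{i,j}$ that lies in $\mathring M$ except at its endpoint on $bM$.

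The core construction is local. On a thin neighborhood of $\beta_{i,j}$ inside $W_{i,j}$ I build a nondegenerate conformal minimal immersion $\wt F_{i,j}$ agreeing with $F$ near $\partial W_{i,j}\cap\mathring M$ and near $W_{i,j}\cap bM$, whose image $\wt F_{i,j}(\beta_{i,j})$ traces a ``tent''-shaped polygonal path of total Euclidean length $L>2\eta/m$ whose unit tangent is close to $\pm\bw_{i,j}$ throughout and which terminates at $F(p_{i,j})$. A suitable such local model is produced by the explicit Weierstrass representation, or equivalently by the Runge-Mergelyan theorem for conformal minimal immersions on admissible sets (\cite[Theorems 3.1 and 5.3]{AFL}) applied to the arc $\beta_{i,j}$ attached to a piece of $\partial W_{i,j}$, matching the boundary values and flux of $F$. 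These local perturbations are then glued with $F|_{M\setminus\bigcup_{i,j} W_{i,j}}$ into a single global $F_0\in\CMI^1_*(M,\r^n)$ by the gluing-of-sprays technique on Cartan pairs (cf.\ \cite{DF2007} and \cite[Section 5]{F2011}), combined with the period-domination property of the sprays to preserve the flux; a generic small perturbation then restores nondegeneracy.

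The verification is routine except for the essential point. Properties (P1) and (P3) follow directly from the construction. For (P2), on $\alpha_{i,j}\setminus \bigcup_{(a,b)} W_{a,b}$ we have $F_0\approx F$ uniformly by the gluing, while on $\alpha_{i,j}\cap W_{a,b}$ the local model $\wt F_{a,b}$ stays close to $F$ by design, so combining with \eqref{eq:F-Yalpha} and choosing $\epsilon_1$ small yields (P2). The crux is (P4). Any arc $\gamma$ from $K$ into $\overline U_{i,j}$ must cross $\partial W_{i,j}$ and traverse a neighborhood of $\beta_{i,j}$ to reach $\overline U_{i,j}$; along this traversal $F_0(\gamma)$ closely follows the tent $\wt F_{i,j}(\beta_{i,j})$, so $|d(\pi_{a,b}\circ F_0)| \approx |\langle \bw_{i,j},\,e_{a,b}\rangle|\cdot|dF_0|$ pointwise, where $e_{a,b}$ is the unit direction of $F(p_{a,b})-\Ygot(p_{a,b})$. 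Consequently, for any Borel partition $\{J_{a,b}\}_{(a,b)\in\ttI}$ of $\gamma$,
\[
\sum_{(a,b)\in\ttI}\length\,\pi_{a,b}(F_0(J_{a,b})) \;\ge\; m\cdot L \;>\;\eta.
\]
The main obstacle is the gluing step, which must simultaneously preserve the flux, the $\Cscr^1(K)$-approximation of (P1), the boundary behavior required by (P2), and the total length and near-uniform tangent direction of the spikes needed for (P4); this balance is delivered by combining the period-dominating spray argument with Cartan-type gluing on the relevant Cartan pairs.
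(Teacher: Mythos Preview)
Your argument for \textrm{(P4)} has a genuine topological gap. You assert that any arc $\gamma$ from $K$ into $\overline U_{i,j}$ ``must traverse a neighborhood of $\beta_{i,j}$,'' but nothing in your setup forces this. The arc $\beta_{i,j}$ lies in the interior of the two--dimensional half--disc $W_{i,j}$ and terminates at the boundary point $p_{i,j}$; your thin tube around $\beta_{i,j}$ does not separate $\overline U_{i,j}$ from $\partial W_{i,j}\cap\mathring M$. An arc from $K$ can enter $W_{i,j}$ through a portion of $\partial W_{i,j}\cap\mathring M$ close to $bM$, stay near $bM$ (where by your own prescription $\wt F_{i,j}$ agrees with $F$), and land in $\overline U_{i,j}$ without ever meeting the spike region. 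Along such an arc $F_0\approx F$ and the projected lengths are small, so the inequality in \textrm{(P4)} fails. There is also a secondary difficulty: even for arcs that do enter the tube, you only control the tangent direction of $F_0$ along $\beta_{i,j}$ itself, not along arbitrary curves in the two--dimensional tube. Since $F_0$ is a conformal immersion, the tangent of $F_0\circ\gamma$ in directions transverse to $\beta_{i,j}$ lies in the orthogonal complement of $\bw_{i,j}$ inside the tangent plane, so your pointwise bound $|d(\pi_{a,b}\circ F_0)|\ge m\,|dF_0|$ is not justified for general $\gamma$.

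The paper circumvents exactly this obstruction by a different mechanism: it attaches \emph{external} arcs $\gamma_{i,j}\subset\wt M\setminus M$ at the points $p_{i,j}$, builds the oscillatory behavior of $u$ on those external arcs, approximates by a conformal minimal immersion $G$ on a neighborhood of $S=M\cup\bigcup\gamma_{i,j}$ via the Mergelyan theorem, and then invokes the Forstneri\v c--Wold exposing--of--boundary--points theorem \cite{FW0} to obtain a diffeomorphism $\phi\colon M\to\phi(M)$, biholomorphic on $\mathring M$, close to the identity away from the $p_{i,j}$, with $\phi(p_{i,j})=q_{i,j}$ the far endpoint of $\gamma_{i,j}$. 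The point is that the external arc is attached to $M$ only at $p_{i,j}$, so $\phi(M)$ acquires a thin finger along $\gamma_{i,j}$, and \emph{any} arc in $\phi(M)$ reaching $q_{i,j}$ is topologically forced to traverse this finger. Setting $F_0=G\circ\phi$ then yields \textrm{(P4)}. Your interior--arc construction lacks this one--point attachment, and the gluing--on--Cartan--pairs step you invoke does not supply a substitute for the exposing--points map.
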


\begin{proof}  
Choose a family of pairwise disjoint Jordan arcs $\{\gamma_{i,j}\subset \wt M\colon (i,j)\in \ttI\}$ such that
each $\gamma_{i,j}$ contains $p_{i,j}$ as an endpoint, is attached transversely to $M$ at $p_{i,j}$, 
and is otherwise disjoint from $M$. The set 
\begin{equation}\label{eq:S}
	S:=M\cup\bigcup_{(i,j)\in \ttI} \gamma_{i,j} \subset\wt M
\end{equation}
is {\em admissible} in $\wt M$ the sense of \cite[Def.\ 5.1]{AFL}. 
Take a smooth map $u\colon S\to\r^n$ satisfying the following properties:
\begin{enumerate}[\rm (i)]
\item $u=F$ on a neighborhood of $M$.
\item $\|u(x)-\Ygot(q)\|<\delta$  and $\|u(x)-F(q)\|<\epsilon_0$ for all 
$(x,q)\in (\gamma_{i,j-1}\cup \alpha_{i,j}\cup \gamma_{i,j})\times \alpha_{i,j}$, for all $(i,j)\in \ttI$.
\item If $\{J_{a,b}\}_{(a,b)\in \ttI}$ is a partition of $\gamma_{i,j}$ by Borel measurable subsets, then
\[
\sum_{(a,b)\in \ttI} \length\, \pi_{a,b}(u(J_{a,b})) > 2\eta.
\]
\end{enumerate}
Notice that condition \eqref{eq:F-Yalpha} allows one to choose a map $u$ satisfying {\rm (i)} and {\rm (ii)}.  
To ensure also {\rm (iii)}, one can simply choose $u$ over each arc $\gamma_{i,j}$ to be highly oscillating in the direction 
of $F(p_{a,b})-\Ygot(p_{a,b})$ for all $(a,b)\in\ttI$, but with sufficiently small extrinsic diameter so that {\rm (ii)} 
remains to hold. (Recall \eqref{eq:neq} and \eqref{eq:piij} and take into account that $\ttI$ is finite.)

Let $\theta$ be a nowhere vanishing holomorphic $1$-form on $\wt M$ (such exists by the Oka-Grauert principle; 
cf.\ Theorem 5.3.1 in \cite[p.\ 190]{F2011}). It is then easy to find a smooth function $f\colon S\to\Agot_*^{n-1}$, 
holomorphic in a neighborhood of $M$, 
such that the pair $(u,f\theta)$ is a {\em generalized conformal minimal immersion} on the set $S$ (\ref{eq:S})
in the sense of \cite[Def.\ 5.2]{AFL}. 

Fix a number $\epsilon_2>0$ which will be specified later.

Since $M$ is a strong deformation retract of $S$ (\ref{eq:S}) and taking {\rm (i)} into account, 
the Mergelyan theorem for conformal minimal immersions into $\r^n$ \cite[Theorem 5.3]{AFL} furnishes a 
conformal minimal immersion $G\in\CMI_*^1(\wt M,\r^n)$ such that
\begin{enumerate}[\rm ({P}1)]
\item[\rm (iv)] $G$ is $\epsilon_2$-close to $u$ in the $\Cscr^0(S)$ and the $\Cscr^1(M)$ topologies, and
\item[\rm (v)] $\Flux(G)=\Flux(F)$.
\end{enumerate}

Let $V\subset \wt  M$ be a small open neighborhood of $S$. For every $(i,j)\in \ttI$ let $q_{i,j}$ denote 
the endpoint of $\gamma_{i,j}$ different from $p_{i,j}$.  If $\epsilon_2>0$ is small enough, 
then properties {\rm (ii)} and {\rm (iii)} guarantee the existence of small open neighborhoods 
$W'_{i,j}\Subset W_{i,j}$ of $p_{i,j}$ and $V_{i,j}$ of $\gamma_{i,j}$ in $V \setminus K$, $(i,j)\in \ttI$, 
satisfying the following conditions:
\begin{enumerate}[\rm ({P}1)]
\item[\rm (vi)] $V_{i,j}\cap M\Subset W'_{i,j}\Subset W_{i,j}\Subset V\setminus K$.
\item[\rm (vii)] $\|G(x)-\Ygot(q)\|<\delta$ and $\|G(x)-F(q)\|<\epsilon_0$ for all 
$(x,q)\in (W_{i,j-1}\cup V_{i,j-1}\cup \alpha_{i,j}\cup W_{i,j}\cup V_{i,j})\times \alpha_{i,j}$, for all $(i,j)\in \ttI$.
\item[\rm (viii)] If $\gamma'_{i,j}\subset W_{i,j}\cup V_{i,j}$ is an arc with initial point in $W_{i,j}$ and final point $q_{i,j}$, 
and if $\{J_{a,b}\}_{(a,b)\in \ttI}$ is a partition of $\gamma'_{i,j}$ by Borel measurable subsets, then
\[
	\sum_{(a,b)\in  \ttI} \length\, \pi_{a,b}(G(J_{a,b})) >2\eta.
\]
\end{enumerate}
Without loss of generality we assume in addition that the compact sets $\overline W_{i,j}\cup\overline V_{i,j}$, $(i,j)\in \ttI$, 
are pairwise disjoint.    

By \cite[Theorem 2.3]{FW0} (see also \cite[Theorem 8.8.1]{F2011}) there exists a smooth diffeomorphism $\phi\colon M\to\phi (M)\subset V$ satisfying the following properties:
\begin{enumerate}[\rm (i)]
\item[\rm (ix)] $\phi \colon  \mathring M \to  \phi(\mathring M)$ is a biholomorphism.
\item[\rm (x)] $\phi$ is as close as desired to the identity in the $\Cscr^1$ topology on $M \setminus \bigcup_{(i,j)\in \ttI} W'_{i,j}$. 
\item[\rm (xi)] $\phi(p_{i,j}) = q_{i,j}\in b\,\phi(M)$ and $\phi(M \cap W'_{i,j}) \subset W_{i,j}\cup V_{i,j}$ for all $(i,j)\in \ttI$.
\end{enumerate}

Let us check that, if $\epsilon_2>0$ is sufficiently small and if the approximation in {\rm (x)} is close enough, 
then the conformal minimal immersion
\begin{equation}\label{eq:G}
	F_0:=G\circ \phi\in \CMI_*^1(M,\r^n)
\end{equation}
satisfies the conclusion of Lemma \ref{lem:step1}. 

Indeed, property {\rm (vi)} gives that 
\begin{equation}\label{eq:K}
K\Subset M \setminus \bigcup_{(i,j)\in \ttI} W_{i,j}\Subset M \setminus \bigcup_{(i,j)\in \ttI} W'_{i,j}.
\end{equation}
Therefore {\rm (i)}, {\rm (iv)}, and {\rm (x)} ensure that $F_0$ is $\epsilon_1$-close to $F$ in the $\Cscr^1(K)$ topology 
provided that $\epsilon_2$ is small enough, thereby proving {\rm (P1)}. 

In order to check {\rm (P2)} fix $(i,j)\in\ttI$ and take $\{p,q\}\in\alpha_{i,j}$. If $p\in \alpha_{i,j} \setminus (W'_{i,j-1}\cup W'_{i,j})$, 
then $F_0(p)\approx G(p)$ by \eqref{eq:G} and {\rm (x)}, hence {\rm (vii)} gives that $\|F_0(p)-\Ygot(q)\|<\delta$ and 
$\|F_0(p)-F(q)\|<\epsilon_0$. If $p\in W'_{i,j-1}\cup W'_{i,j}$, then {\rm (xi)} gives that 
$\phi(p)\in W_{i,j-1}\cup V_{i,j-1}\cup W_{i,j}\cup V_{i,j}$, and so \eqref{eq:G} and {\rm (vii)} imply that 
$\|F_0(p)-\Ygot(q)\|<\delta$ and $\|F_0(p)-F(q)\|<\epsilon_0$ as well. This proves {\rm (P2)}.

 Property {\rm (P3)} is directly implied by {\rm (v)} and \eqref{eq:G}.

Finally, in order to check {\rm (P4)} fix $(i,j)\in\ttI$, let $\gamma\subset M$ be an arc with the initial point in $K$ 
and the final point $p_{i,j}$, and let $\{J_{a,b}\}_{(a,b)\in\ttI}$ be a partition of $\gamma$ by Borel measurable subsets. 
Properties \eqref{eq:K} and {\rm (x)} give that $\phi(K)\Subset V \setminus \bigcup_{(i,j)\in \ttI} W_{i,j}$. 
Properties  {\rm (vi)}, {\rm (x)}, and {\rm (xi)} guarantee that $\phi(\gamma)$ has a connected subarc contained in 
$W_{i,j}\cup V_{i,j}$ with the initial point in $W_{i,j}$ and the final point $q_{i,j}$. Therefore, \eqref{eq:G} and 
condition {\rm (viii)} trivially implies the existence of neighborhoods $U_{i,j}$ of $p_{i,j}$ satisfying  {\rm (P4)}.
This proves Lemma \ref{lem:step1}.
\end{proof}

We continue with the proof of Lemma \ref{lem:Jordan1}.

Let $F_0\in\CMI_*^1(M,\r^n)$ and $\{U_{i,j}\colon (i,j)\in \ttI\}$  be furnished by Lemma \ref{lem:step1} for a given 
number $\epsilon_1>0$ that will be specified later. Up to a shrinking, we may assume that the sets $\overline U_{i,j}$, 
$(i,j)\in \ttI$, are simply connected, smoothly bounded, and pairwise disjoint.
Roughly speaking, $F_0$ meets conditions {\rm (a)} and {\rm (c)} in Lemma \ref{lem:Jordan1} (cf.\ properties {\rm (P2)} 
and {\rm (P3)}), but it satisfies condition {\rm (b)} only on the sets $bM\cap \overline U_{i,j}$, $(i,j)\in \ttI$ (cf.\ {\rm (P4)}). 
To conclude the proof we now perturb $F_0$ near the points of $bM$ where it does not meet  {\rm (b)} 
(i.e.,\ outside $\bigcup_{(i,j)\in \ttI} U_{i,j}$), preserving what has already been achieved so far. 
It is at this stage where the Riemann-Hilbert problem for minimal surfaces in $\r^n$ (see Theorem \ref{th:RHCMI}) 
will be exploited.

Let $\epsilon_3>0$ be a positive number that will be specified later.

Take an annular neighborhood $A\subset M\setminus K$ of $bM$ and a smooth retraction $\rho\colon A\to bM$.
In view of condition {\rm (P2)} we may choose a family of pairwise disjoint, smoothly bounded closed disc 
$\overline D_{i,j}$ in $M\setminus K$, $(i,j)\in\ttI$, satisfying
\begin{equation}\label{eq:F0D}
	\text{$\|F_0(p)-\Ygot(q)\|<\delta$\ for all $(p,q)\in \overline D_{i,j}\times\alpha_{i,j}$}
\end{equation}
and  the following properties:
\begin{enumerate}[\it i)]
\item $\bigcup_{(i,j)\in \ttI} \overline D_{i,j}\subset A$.
\item $\overline D_{i,j}\cap bM$ is a compact connected Jordan arc in $\alpha_{i,j}\setminus\{p_{i,j-1},p_{i,j}\}$ with an endpoint 
in $U_{i,j-1}$ and the other endpoint in $U_{i,j}$.
\item $\rho(\overline D_{i,j})\subset \alpha_{i,j}\setminus\{p_{i,j-1},p_{i,j}\}$ and $\|F_0(\rho(x))-F_0(x)\|<\epsilon_3$ for all 
$x\in\overline D_{i,j}$ and all $(i,j)\in \ttI$.
\end{enumerate}

For each $(i,j)\in \ttI$ we also choose a pair of compact connected Jordan arcs 
$\beta_{i,j}\Subset I_{i,j}\Subset \overline D_{i,j}\cap\alpha_{i,j}$ with an endpoint in $U_{i,j-1}$ and the other endpoint in 
$U_{i,j}$, and a pair of vectors $\bu_{i,j}$, $\bv_{i,j}\in \r^n$, such that 
\begin{equation}\label{eq:uijvij}
	\|\bu_{i,j}\|=1 = \|\bv_{i,j}\|;  \quad \text{$\bu_{i,j}$, $\bv_{i,j}$, and $F(p_{i,j})-\Ygot(p_{i,j})$ are pairwise orthogonal}.
\end{equation}
Let $\mu\colon bM\to\r_+$ be a continuous function such that
\begin{equation}\label{eq:mu}
	0\leq\mu\leq\eta,\quad \text{$\mu=\eta$\ \ on  $\bigcup_{(i,j)\in I} \beta_{i,j}$,\quad \text{$\mu=0$\; 
	on $bM\setminus 		\bigcup_{(i,j)\in \ttI} I_{i,j}$}}.
\end{equation}
Consider the continuous map  $\varkappa\colon bM \times\overline{\d}\to\r^n$ given by 
\begin{equation}\label{eq:vark}
	\varkappa(x,\xi)=\left\{\begin{array}{ll}
	F_0(x), & x\in bM\setminus \bigcup_{(i,j)\in \ttI} I_{i,j}; \\
	F_0(x) + \mu(x)\,(\Re \xi \bu_{i,j} + \Im \xi  \bv_{i,j}),  & x\in I_{i,j},\; (i,j)\in\ttI.
	\end{array}\right.
\end{equation}

In this setting, Theorem \ref{th:RHCMI} provides  for every $(i,j)\in \ttI$ an arbitrarily small 
open neighborhood $\Omega_{i,j}\subset \overline D_{i,j}$ of   
$I_{i,j}$ in $M$ and a nondegenerate conformal minimal immersion $\wh F\in \CMI_*^1(M,\r^n)$ 
satisfying the following properties:
\begin{enumerate}[\rm ({P}1)]
\item[\rm (P5)] $\dist(\wh F(x),\varkappa(x,\t))<\epsilon_3$ for all $x\in bM$.
\item[\rm (P6)] $\dist(\wh F(x),\varkappa(\rho(x),\overline{\d}))<\epsilon_3$ for all 
$x\in \Omega:=\bigcup_{(i,j)\in\ttI}\Omega_{i,j}$.
\item[\rm (P7)] $\wh F$ is $\epsilon_3$-close to $F_0$ in the $\Cscr^1$ topology on $M\setminus \Omega$.
\item[\rm (P8)] $\Flux (\wh F)=\Flux (F_0)$.
\end{enumerate}
Recall that $\pi_{i,j}$ is the projection \eqref{eq:piij}. 
Note  that {\rm (P6)}, \eqref{eq:vark}, and property {\it iii)} ensure that
\begin{enumerate}[\rm ({P}1)]
\item[\rm (P9)] $\pi_{i,j}\circ\wh F$ is $2\epsilon_3$-close to $\pi_{i,j}\circ F_0$ in the $\Cscr^0(\Omega_{i,j})$ topology
for all $(i,j)\in\ttI$. 
\end{enumerate}

Let us check that $\wh F$ satisfies the conclusion of Lemma \ref{lem:Jordan1} 
provided that the positive numbers $\epsilon_0$, $\epsilon_1$, and $\epsilon_3$ are chosen sufficiently small. 

Notice that properties  {\rm (P1)} and {\rm (P7)} imply that 
\begin{equation}\label{eq:close}
\text{$\wh F$ is $(\epsilon_1+\epsilon_3)$-close to $F$ in the $\Cscr^1(K)$ topology},
\end{equation}
and hence $\wh F$ and $F$ are $\epsilon$-close in $\Cscr^1(K)$ provided that $\epsilon_1+\epsilon_3<\epsilon$; 
take into account that $\Omega\subset\bigcup_{(i,j)\in I} \overline D_{i,j}\subset A\subset M\setminus K$.

Let us now check property {\rm (a)} in  Lemma \ref{lem:Jordan1}. 
Fix a point $p\in bM$. If  $p\in bM\setminus \Omega$ then 
by {\rm (P7)} we have  $\|\wh F(p)- F_0(p)\|<\epsilon_3$, 
and hence {\rm (P2)} ensures that $\|\wh F(p)-\Ygot(p)\|<\sqrt{\delta^2+\eta^2}$ 
provided that $\epsilon_3>0$ is small enough.

Assume now that $p\in bM\cap\Omega$; then $p\in bM\cap \Omega_{i,j}$ for some $(i,j)\in \ttI$. 
In view of {\rm (P5)}, \eqref{eq:mu}, and \eqref{eq:vark}, we have that
\begin{equation}\label{eq:whFp}
\big\|\wh F(p)- \big(F_0(p) + \mu(p)\,(\Re \xi \bu_{i,j} + \Im \xi  \bv_{i,j})\big)\big\|<\epsilon_3\quad \text{for some $\xi\in\t$.}
\end{equation}
On the other hand, taking into account \eqref{eq:uijvij}, we obtain
\begin{multline*}
\big\|\big(F_0(p) + \mu(p)\,(\Re \xi \bu_{i,j} + \Im \xi  \bv_{i,j})\big) -\Ygot(p)\big\|  \leq
\\ 
\|F_0(p)-F(p_{i,j})\| + \sqrt{\|F(p_{i,j})-\Ygot(p_{i,j})\|^2 + \mu(p)^2}
 +  \|\Ygot(p_{i,j})-\Ygot(p)\| \stackrel{\text{{\rm (P2)}},\eqref{eq:Y-Y}}{<}
\\
 \sqrt{\|F(p_{i,j})-\Ygot(p_{i,j})\|^2 + \mu(p)^2}+2\epsilon_0.
\end{multline*}
Together with \eqref{eq:whFp}, \eqref{eq:mu}, and \eqref{eq:lemJordan1} we get
\[
\|\wh F(p)-\Ygot(p)\|<\sqrt{\|F(p_{i,j})-\Ygot(p_{i,j})\|^2 + \mu(p)^2}+2\epsilon_0+\epsilon_3<\sqrt{\delta^2+\eta^2},
\]
where the latter inequality holds provided that $\epsilon_0$ and $\epsilon_3$ are chosen small enough from the beginning. 
This proves property {\rm (a)} in Lemma \ref{lem:Jordan1}.

Let us now verify property  {\rm (b)}. Recall that $p_0\in \mathring K$.  If $\epsilon_1$ and $\epsilon_3$ are small enough, 
then \eqref{eq:>tau} and \eqref{eq:close} ensure that
\begin{equation}\label{eq:distwhF}
	\dist_{\wh F}(p_0,bK)>d.
\end{equation}
We now estimate $\dist_{\wh F}(bK,bM)$. Properties {\rm (P4)}, {\rm (P7)}, and {\rm (P9)} guarantee the following:

\begin{claim}\label{cla:ep3}
If $\epsilon_3>0$ is chosen small enough, then for every arc $\gamma\subset M\setminus \mathring K$ with the 
initial point in $bK$ and the final point in $\bigcup_{(i,j)\in\ttI}\overline U_{i,j}$, 
and for any partition $\{J_{a,b}\}_{(a,b)\in\ttI}$ of $\gamma$ by Borel measurable subsets satisfying
$\gamma\cap\overline \Omega_{a,b}\subset J_{a,b}$ for all $(a,b)\in\ttI$,
we have 
\[
	\length\, \wh F(\gamma) \geq \sum_{(a,b)\in\ttI}\length \, \pi_{a,b}(\wh F(J_{a,b})) >\eta.
\]
\end{claim}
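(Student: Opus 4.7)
The plan is to treat the two inequalities separately. The first, $\length\,\wh F(\gamma)\ge\sum\length\,\pi_{a,b}(\wh F(J_{a,b}))$, is immediate: each projection $\pi_{a,b}\colon\r^n\to\r$ is $1$-Lipschitz, so $\length\,\pi_{a,b}(\wh F(J))\le\length\,\wh F(J)$ for every Borel $J\subset\gamma$, and summing these inequalities over the Borel partition $\{J_{a,b}\}$ yields the bound by countable additivity of arc length along the rectifiable curve $\wh F\circ\gamma$.

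For the second inequality, the plan is to apply property {\rm (P4)} to $F_0$ and then transfer the resulting lower bound to $\wh F$. Since $bK\subset K$, the arc $\gamma$ and the partition $\{J_{a,b}\}$ fulfill the hypothesis of {\rm (P4)}, so $\sum\length\,\pi_{a,b}(F_0(J_{a,b}))>\eta$; tracing conditions {\rm (iii)} and {\rm (viii)} in the proof of Lemma~\ref{lem:step1} actually shows that this sum exceeds $2\eta$, leaving a safety margin of size at least $\eta$. Two structural observations then make the transfer work. First, because the $\Omega_{a,b}$'s are pairwise disjoint (they sit in pairwise disjoint $\overline D_{a,b}$) and the hypothesis $\gamma\cap\overline\Omega_{a',b'}\subset J_{a',b'}$ together with disjointness of the $J_{a,b}$'s forces $J_{a,b}\cap\Omega_{a',b'}=\emptyset$ for $(a',b')\ne(a,b)$, we have $J_{a,b}\subset(M\setminus\Omega)\cup\Omega_{a,b}$ for every $(a,b)\in\ttI$. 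Second, on $M\setminus\Omega$ property {\rm (P7)} supplies $\Cscr^1$-closeness of $\wh F$ to $F_0$, which integrates to the length estimate $\length\,\pi_{a,b}(\wh F(J))\ge\length\,\pi_{a,b}(F_0(J))-\epsilon_3\cdot\length_M(J)$ for every Borel $J\subset M\setminus\Omega$. Dropping the nonnegative contributions from $J_{a,b}\cap\Omega_{a,b}$ in the $\wh F$-sum, subtracting from the $F_0$-sum the small correction $\sum\length\,\pi_{a,b}(F_0(J_{a,b}\cap\Omega_{a,b}))$ (which is controlled by $\|F_0\|_{\Cscr^1(M)}\cdot\length_M(\Omega)$ and can be made arbitrarily small by shrinking $\Omega$ via Theorem~\ref{th:RHCMI}), and taking $\epsilon_3>0$ small enough, the combined error stays below the margin $\eta$, and one concludes $\sum\length\,\pi_{a,b}(\wh F(J_{a,b}))>\eta$.

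The main obstacle is that the additive error $\epsilon_3\cdot\length_M(\gamma)$ is not uniformly controlled, since $\length_M(\gamma)$ can be arbitrarily large over admissible arcs. The resolution is to exploit the fact that $F_0$ is a conformal immersion of the compact set $M\setminus\Omega$, so $|dF_0|$ is bounded below there; for arcs with large $M$-length the quantity $\sum\length\,\pi_{a,b}(F_0(J_{a,b}))$ itself grows proportionally to $\length_M(\gamma)$, thereby absorbing the additive error into the safety margin once $\epsilon_3$ is chosen small relative to the lower bound on $|dF_0|$ on $M\setminus\Omega$. This uniformity argument completes the scheme.
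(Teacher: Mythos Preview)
Your treatment of the first inequality is fine. The second inequality, however, has a genuine gap that your final paragraph does not close.

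The key problem is your resolution of the ``main obstacle.'' You assert that for arcs of large $M$-length, the quantity $\sum_{(a,b)}\length\,\pi_{a,b}(F_0(J_{a,b}))$ grows proportionally to $\length_M(\gamma)$, invoking the lower bound on $|dF_0|$. This is false. The lower bound $|dF_0|\ge c>0$ controls the \emph{unprojected} length $\length\,F_0(\gamma)$, not the projected sums. Taking the infimum of your sum over all admissible partitions gives $\int_\gamma \min_{(a,b)}|d(\pi_{a,b}\circ F_0)|$, and this integrand vanishes along any tangent direction $v\in T_pM$ for which $dF_0(p)v$ lies in one of the hyperplanes $\langle F(p_{a,b})-\Ygot(p_{a,b})\rangle^\perp$; such directions exist at every point. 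An arc that lingers tangent to these bad directions has arbitrarily large $\length_M(\gamma)$ while the projected sum stays bounded, so the additive error $\epsilon_3\cdot\length_M(\gamma)$ cannot be absorbed this way. A related error appears earlier: you bound $\sum\length\,\pi_{a,b}(F_0(J_{a,b}\cap\Omega_{a,b}))$ by $\|F_0\|_{\Cscr^1}\cdot\length_M(\Omega)$, but the relevant quantity is $\length_M(\gamma\cap\Omega)$, which is not controlled by the size of $\Omega$ since $\gamma$ may wiggle arbitrarily inside each $\Omega_{a,b}$.

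Note that the paper lists {\rm (P9)} alongside {\rm (P4)} and {\rm (P7)} as guaranteeing the claim, and your argument makes no use of {\rm (P9)}. That property gives $\Cscr^0$-closeness of $\pi_{a,b}\circ\wh F$ to $\pi_{a,b}\circ F_0$ on $\Omega_{a,b}$; combined with {\rm (P7)} it yields $\Cscr^0$-closeness on all of $J_{a,b}$ (not just on $J_{a,b}\setminus\Omega$). This is the ingredient that lets one transfer the projected lengths without discarding the $\Omega_{a,b}$-contribution and without incurring an error proportional to $\length_M(\gamma)$. Your scheme of dropping the $\Omega_{a,b}$-part and relying solely on the $\Cscr^1$ estimate {\rm (P7)} on $M\setminus\Omega$ cannot be made uniform in $\gamma$.
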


Consider now an arc $\gamma\subset M\setminus \bigcup_{(i,j)\in\ttI} \overline U_{i,j}$ with the initial point in $bK$, 
the final point in $bM$, and otherwise disjoint from $K$. 
Then there exist $(i,j)\in\ttI$ and a subarc $\hat\gamma$ of $\gamma$ with the endpoints
$q\in M\setminus\Omega$ and $p\in \beta_{i,j}$ 
satisfying $\hat\gamma\subset \overline\Omega_{i,j}\setminus (\overline U_{i,j-1}\cup \overline U_{i,j})$.
In view of \eqref{eq:uijvij}, \eqref{eq:mu} and \eqref{eq:whFp} there exists $\xi\in\t$ such that
\begin{equation}\label{eq:whF-F0}
\|\wh F(p)- F_0(p)\|> \mu(p)\|\Re \xi \bu_{i,j} + \Im \xi  \bv_{i,j}\|-\epsilon_3=\eta-\epsilon_3.
\end{equation}
On the other hand, we have
\begin{eqnarray*}
	\length\, \wh F(\gamma) & \geq & \length\, \wh F(\hat \gamma) \ge  \|\wh F(q)-\wh F(p)\| 
\\
 & \geq & \|\wh F(p)-F_0(p)\| 
\\
 & & - \|\wh F(q)-F_0(q)\| - \|F_0(q)-F_0(\rho(q))\| - \|F_0(\rho(q))-F_0(p)\|
\\
 & > & \|\wh F(p)-F_0(p)\| -\epsilon_0-2\epsilon_3,
\end{eqnarray*}
where in the last inequality we used {\rm (P2)}, {\it iii)}, and {\rm (P7)}. Combining this inequality and \eqref{eq:whF-F0} 
we get that  $\length\, \wh F(\gamma)>\eta-\epsilon_0-3\epsilon_3$. Together with Claim \ref{cla:ep3} we obtain that 
$\dist_{\wh F}(bK,bM)>\eta-\epsilon_0-3\epsilon_3$ and, taking into account  \eqref{eq:distwhF}, $\dist_{\wh F}(p_0,bM)>d+\eta$ 
provided that $\epsilon_0$ and $\epsilon_3$ are small enough. This shows property {\rm (b)}.

Finally, condition {\rm (c)} is trivially implied by  {\rm (P3)} (cf.\ Lemma \ref{lem:step1})
and {\rm (P8)}, thereby concluding the proof of Lemma \ref{lem:Jordan1}.
\end{proof}


\begin{proof}[Proof of Lemma \ref{lem:Jordan2}]
Let $\epsilon>0$. We shall find $\wh G\in\CMI_*^1(M,\r^n)$ which is $\epsilon$-close to $G$ in the $\Cscr^0(M)$ topology and 
satisfies $\dist_{\wh G}(p_0,bM)>\lambda$ and $\Flux(\wh G)=\Flux(G)$.

Choose numbers $d_0$ and $\delta_0$ such that $0<d_0<\dist_G(p_0,bM)$
and $0<\delta_0<\epsilon$. Set 
\[
	c:=\frac{\sqrt{6(\epsilon^2-\delta_0^2)}}{\pi}>0.
\]
Consider the following sequences  defined recursively:
\[
d_j:=d_{j-1}+\frac{c}{j}>0,\qquad \delta_j:=\sqrt{\delta_{j-1}^2+\frac{c^2}{j^2}}>0,\quad j\in\n.
\]
Observe that
\begin{equation}\label{eq:limits}
\{d_j\}_{j\in\z_+}\nearrow +\infty,\qquad \{\delta_j\}_{j\in\z_+}\nearrow \epsilon.
\end{equation}
We claim that there exists a sequence $G_j\in\CMI_*^1(M,\r^n)$
$(j\in\z_+)$ of conformal minimal immersions  enjoying the following properties:
\begin{enumerate}[\rm (a$_{j}$)]
\item $\|G_j-G\|_{0,bM}<\delta_j$.
\item $\dist_{G_j}(p_0,bM)>d_j$.
\item $\Flux(G_j)=\Flux(G)$.
\end{enumerate}
We proceed by induction, beginning with the immersion $G_0:=G$. For the inductive step we assume the existence of 
$G_j\in\CMI_*^1(M,\r^n)$ satisfying {\rm (a$_j$)}, {\rm (b$_j$)}, and {\rm (c$_j$)} for some $j\in\z_+$. 
Applying Lemma \ref{lem:Jordan1} to the data
\[
F=G_j,\quad \Ygot=G|_{bM},\quad \delta=\delta_j,\quad p_0,\quad \eta=\frac{c}{j+1},\quad d=d_j,
\]
we obtain a conformal minimal immersion $G_{j+1}\in\CMI_*^1(M,\r^n)$ satisfying 
{\rm (a$_{j+1}$)}, {\rm (b$_{j+1}$)}, and {\rm (c$_{j+1}$)}, hence closing the induction step.

By {\rm (a$_j$)}, the Maximum Principle, and the latter assertion in \eqref{eq:limits}, $G_j$ is $\epsilon$-close to $G$ in the 
$\Cscr^0(M)$ topology for all $j\in \z_+$. On the other hand, {\rm (b$_j$)} and the former assertion in \eqref{eq:limits} ensure that 
$\dist_{G_j}(p_0,bM)>d_j>\lambda$ for any large enough $j\in\z_+$. In view of {\rm (c$_j$)}, to conclude the proof it suffices 
to choose $\wh G:=G_j$ for a sufficiently large $j\in\n$.
\end{proof}


Another important point in the proof of Theorem \ref{th:Jordan} is that the general position of 
conformal minimal immersions $M\to\r^n$ is embedded if $n\geq 5$ (cf.\ \cite[Theorem 1.1]{AFL}). 
Moreover, it is easy to derive from the proof in \cite{AFL} that the general position of the boundary curves 
of conformal minimal immersions $M\to\r^n$ is also embedded for any $n\geq 3$. 
The following is the precise result that will be used in the proof of Theorem \ref{th:Jordan}.

\begin{theorem}\label{th:gp}
Let $M$ be a compact bordered Riemann surface and let $n\geq 3$ and $r\ge 1$ be natural numbers. 
\begin{enumerate}[\rm (a)]
\item Every conformal minimal immersion $F\in \CMI^r(M,\r^n)$ can be approximated in the $\Cscr^r(M)$ topology 
by nondegenerate conformal minimal immersions $\wt F\in \CMI_*^r(M,\r^n)$ such that 
$\wt F|_{bM}\colon bM\to\r^n$ is an embedding and $\Flux(\wt F)=\Flux(F)$.
\item If $n\geq 5$ then every nondegenerate conformal minimal immersion $F\in \CMI_*^r(M,\r^n)$ can be approximated in the 
$\Cscr^r(M)$ topology by nondegenerate conformal minimal embeddings $\wt F\in \CMI_*^r(M,\r^n)$ satisfying 
$\Flux(\wt F)=\Flux(F)$.
\end{enumerate}
\end{theorem}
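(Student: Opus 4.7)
The plan is as follows. Part (b) is essentially \cite[Theorem 1.1]{AFL} extended to compact bordered Riemann surfaces via the Mergelyan-type approximation already cited in Section \ref{sec:prelim}, so I would just appeal to it. The substantive content is part (a); by the density statement of \cite[Theorem 3.1]{AFL} it is enough to start with $F$ already nondegenerate, and then approximate such an $F$ in $\Cscr^r(M)$ by a nondegenerate $\wt F \in \CMI_*^r(M,\r^n)$ with the same flux whose boundary restriction is injective.

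The core idea is a codimension/Sard argument for the boundary-difference map. Since $F$ is an immersion, there is an open neighborhood $U$ of the diagonal $\Delta \subset bM \times bM$ on which $F(p) \ne F(q)$ for $(p,q)\in U\setminus \Delta$, and this persists under a sufficiently small $\Cscr^1$ perturbation. Setting $K := (bM \times bM) \setminus U$, which is compact and of real dimension $2$, it then suffices to produce a perturbation $\wt F$ of $F$ with $\wt F(p) \ne \wt F(q)$ for every $(p,q)\in K$. This is a condition in $\r^n$ with $n \ge 3 > 2 = \dim_\r K$, so from a dimension count generic avoidance should be achievable.

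To make this precise I would construct a holomorphic spray $F_w \in \CMI^r(M,\r^n)$, with $w$ in a ball $B \subset \c^N$ for large $N$, such that $F_0 = F$, $\Flux(F_w) \equiv \Flux(F)$, $F_w \to F$ in $\Cscr^r(M)$ as $w \to 0$, and the following difference-domination condition holds: for every $(p,q)\in K$ the partial differential $\partial_w \bigl(F_w(p) - F_w(q)\bigr)\big|_{w=0} \colon \c^N \to \r^n$ is surjective. The flux-preservation is arranged exactly as in the proof of Theorem \ref{th:RH}: start from a period-dominating spray $f_w \colon M\to \Agot_*$ supplied by \cite[Lemma 5.1]{AF1} and \cite[Lemma 3.2]{AFL}, integrate, and apply the implicit function theorem in the period-dominating directions to cut out the constant-period (hence constant-flux) subfamily. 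For the difference-domination, I would adjoin to the spray finitely many additional parameters corresponding to localized perturbations concentrated near carefully chosen interior points of $M$; for a fixed $(p_0,q_0)\in K$ a localized perturbation supported near $p_0$ alone can realize any prescribed displacement of $F(p_0) - F(q_0) \in \r^n$, and by compactness of $K$ together with openness of the surjectivity condition, finitely many such perturbations produce surjectivity pointwise on $K$ after shrinking $B$.

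With the spray in hand, the locus $\{(p,q,w)\in K\times B : F_w(p) = F_w(q)\}$ is a smooth submanifold of real codimension $n$ in $K\times B$ near $K\times\{0\}$, hence of dimension $2 + 2N - n < 2N$; by Sard's theorem its projection to $B$ has measure zero, so there are arbitrarily small regular values $w \in B$, and for any such $w$ the map $F_w|_{bM}$ is injective on $K$ and therefore on all of $bM$. Nondegeneracy being an open condition, $F_w$ remains nondegenerate for small $w$, and the flux is preserved by construction. The principal technical obstacle is the simultaneous realization of both the period-domination (needed to fix the flux) and the difference-domination (needed for boundary injectivity) within a single holomorphic spray of conformal minimal immersions; but both are standard spray manipulations of the type already used in the paper, and they combine in the by-now familiar way.
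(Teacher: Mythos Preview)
Your proposal is correct and follows essentially the same approach as the paper: reduce to nondegenerate $F$, build a flux-preserving spray of conformal minimal immersions whose difference map is submersive off a diagonal neighborhood, and apply a transversality/Sard argument together with the dimension count $\dim_\r(bM\times bM)=2<n$. The only cosmetic difference is that the paper directly invokes the spray with difference-domination already constructed in \cite[Sec.~4]{AFL} (built there for the embedding theorem in $\r^n$, $n\ge 5$) rather than assembling it from scratch as you sketch.
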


As already said, assertion {\rm (b)} in this theorem is proved in \cite[Theorem 4.1]{AFL}.

\begin{proof}[Proof of {\rm (a)}]
By \cite[Theorem 3.1 (a)]{AFL} we may assume that  $F\in \CMI_*^r(M,\r^n)$ is nondegenerate.
We consider the {\em difference map} $\delta F\colon M\times M\to \r^n$, defined by
\[
	\delta F(x,y)=F(y)-F(x), \qquad x,y\in M.
\]
Clearly $F$ is injective if and only if $(\delta F)^{-1}(0)= D_M:=\{(x,x): x\in M\}$,
the diagonal of $M\times M$. Since $F$ is an immersion, it is locally injective, and hence there is an open 
neighborhood  $U\subset M\times M$ of $D_M$ 
such that $\delta F$ does not vanish anywhere on $\overline U\setminus D_M$. 

In this setting, the construction in \cite[Sec.\ 4]{AFL} furnishes a neighborhood $\Omega \subset \r^N$  
of the origin in a Euclidean space and a real analytic map $H\colon \Omega \times M \to \r^n$ satisfying 
the following conditions:
\begin{enumerate}[\rm (i)]
\item $H(0,\cdotp)=F$.
\item $H(\xi,\cdotp)\in \CMI_*^r(M)$ and $\Flux(H(\xi,\cdotp))=\Flux(F)$ for every $\xi \in \Omega$.
\item The difference map $\delta H\colon \Omega \times M\times M \to \r^n$, defined by 
\[
	\delta H(\xi,x,y) = H(\xi,y)-H(\xi,x), \qquad \xi\in \Omega, \ \ x,y\in M, 
\]
is a submersive family  on $(M\times M) \setminus U$, in the sense that the partial differential 
\[
	\di_\xi|_{\xi=0} \, \delta H(\xi,x,y) \colon \r^N \to \r^n
\]
is surjective for every $(x,y)\in (M\times M) \setminus U$. 
\end{enumerate}
Set $\psi_H:=H|_{\Omega\times bM}$ and $\delta \psi_H:=(\delta H)|_{\Omega\times bM\times bM}$. 
From {\rm (iii)} and the compactness of $(bM\times bM)\setminus U$ we obtain that the partial differential 
$\di_\xi (\delta\psi_H(\xi,x,y))\colon\r^N\to\r^n$ is surjective for all $\xi$ in a neighborhood 
$\Omega'\subset\Omega$ of $0\in\r^N$, for every $(x,y)\in (bM\times bM)\setminus U$. 
This implies that the map $\delta \psi_H\colon \Omega'\times (bM\times bM)\setminus U\to\r^n$ 
is transverse to any submanifold of $\r^n$, in particular, to the origin $0\in\r^n$. 
The standard transversality argument due to Abraham \cite{Abraham}
(see also \cite[Sec.\ 7.8]{F2011}) ensures that for a generic choice of $\xi\in\Omega'$  the difference map 
$\delta \psi_H(\xi,\cdotp,\cdotp)$ is  transverse to $0\in\r^n$ on $(bM\times bM)\setminus U$.
Since $n\geq 3$ and $(bM\times bM)\setminus U$ is of real dimension 2, it follows that
\begin{equation}\label{eq:psiH}
\text{$\delta \psi_H(\xi,\cdotp,\cdotp)$ does not vanish anywhere on $(bM\times bM)\setminus U$.}
\end{equation}
If we choose $\xi={\xi_0}$ close enough to $0\in\r^N$ and such that \eqref{eq:psiH} holds,
then the conformal minimal immersion $\wt F=H({\xi_0},\cdotp)\in\CMI_*^r(M,\r^n)$ satisfies the conclusion of 
Theorem \ref{th:gp} and is arbitrarily close to $F$ in the $\Cscr^r(M)$ topology.
(See the proof of \cite[Theorem 4.1]{AFL} for further details.)
\end{proof}



\begin{proof}[Proof of Theorem \ref{th:Jordan}]
We may assume that $M$ is a smoothly bounded compact domain in an open Riemann surface 
$\wt M$.  By \cite[Theorem 3.1 (a)]{AFL} we can assume that $F$ is nondegenerate, $F\in\CMI_*^1(M,\r^n)$.
By Theorem \ref{th:gp} above we may also assume that $F|_{bM}$ is an embedding and, 
if $n\geq 5$, that $F\colon M\to\r^n$ is an embedding.

Choose a compact domain $M_0\subset\mathring M$, a point $p_0\in \mathring M_0$, and set $F_0:=F$. Let $\theta$ be a 
holomorphic $1$-form in $\wt M$ vanishing nowhere on $M$ and denote by ${\rm d}\colon M\times M\to \r$ the distance function 
on the Riemannian surface $(M,|\theta|^2)$.  

Pick a number $\epsilon_0>0$. 

By applying Lemma \ref{lem:Jordan2} and Theorem \ref{th:gp} we shall inductively construct a sequence 
$\{\Xi_j=(M_j,\epsilon_j,F_j)\}_{j\in\n}$, where $M_j$ is a compact domain in $\mathring M$,
$\epsilon_j>0$, and $F_j\in \CMI_*^1(M,\r^n)$, satisfying
the following properties for all $j\in\n$:
\begin{enumerate}[\rm (1$_{j}$)]
\item $M_{j-1} \Subset \mathring M_{j}$ and $\max\{{\rm d}(p,bM)\colon p\in bM_j\}<1/j$.
\item $\max\big\{\|F_j-F_{j-1}\|_{0,M}, \big\|(\partial F_j-\partial F_{j-1})/\theta\big\|_{0,M_{j-1}}\big\}<\epsilon_j$.
\item $\dist_{F_j}(p_0,bM_k)>k$ for all $k\in\{0,\ldots,j\}$.
\item $\Flux(F_j)=\Flux(F)$.
\item $F_j|_{bM}$ is an embedding and, if $n\geq 5$, $F_j$ is an embedding.
\item $\epsilon_j<\min\left\{ \epsilon_{j-1}/2 , \tau_j,\varsigma_j\right\},$
where the numbers $\tau_j$ and $\varsigma_j$ are defined as follows:
\end{enumerate}
\begin{equation}\label{eq:tauj}
\tau_j=\frac1{2^j}\min_{k\in\{0,\ldots,j-1\}} \min_{p\in M} \left\|\frac{\partial F_k}{\theta}(p) \right\|, 
\end{equation}
\begin{equation}\label{eq:varsigmaj}
\varsigma_j=\left\{
\begin{array}{ll}
\displaystyle \frac1{2j^2} \inf\left\{ \|F_{j-1}(p)-F_{j-1}(q)\|\colon p,q\in bM,\, {\rm d}(p,q)>\frac1{j}\right\} & \text{if $n\in\{3,4\}$}
\\
 & 
\\
\displaystyle \frac1{2j^2} \inf\left\{ \|F_{j-1}(p)-F_{j-1}(q)\|\colon p,q\in M,\, {\rm d}(p,q)>\frac1{j}\right\} & \text{if $n\geq 5$.}
\end{array}
\right.
\end{equation}

Notice that $\Xi_0=(M_0,\epsilon_0,F_0)$ meets conditions {\rm (3$_0$)}, {\rm (4$_0$)}, and {\rm (5$_0$)}, whereas {\rm (1$_0$)}, 
{\rm (2$_0$)}, and {\rm (6$_0$)} are void. Let $j\in\n$ and assume inductively the existence of triples $\Xi_0,\ldots, \Xi_{j-1}$ 
satisfying the above conditions. Since $F_0,\ldots,F_{j-1}$ are immersions, the number $\tau_j$ \eqref{eq:tauj} is positive. 
Moreover, {\rm (5$_{j-1}$)} ensures that the number  $\varsigma_j$ \eqref{eq:varsigmaj} is positive as well. 
Therefore there exists $\epsilon_j>0$ satisfying {\rm (6$_j$)}.  Lemma \ref{lem:Jordan2} ensures that $F_{j-1}$ can be approximated 
in the $\Cscr^0(M)$ topology (and hence also in the $\Cscr^1(M_{j-1})$ topology) by a conformal minimal immersion 
$F_j\in\CMI_*^1(M,\r^n)$ satisfying $\dist_{F_j}(p_0,bM)>j$ and {\rm (4$_j$)}.
Taking into account {\rm (3$_{j-1}$)}, we may choose an $F_j$ with these properties and a compact region 
$M_j\subset \mathring M$ satisfying also  {\rm (1$_j$)}, {\rm (2$_j$)}, and {\rm (3$_j$)}. 
Furthermore, in view of Theorem \ref{th:gp}, we may also assume that $F_j$ meets condition {\rm (5$_j$)}. 
This concludes the inductive step and hence the construction of the sequence $\{\Xi_j\}_{j\in\n}$. 

By properties {\rm (1$_j$)} and {\rm (6$_j$)}, which hold for all $j\in\n$, we have $\bigcup_{j=1}^\infty M_j=M$
and the sequence $\{F_j\}_{j\in\n}$ converges uniformly on $M$ to a continuous map   
\[
	\wt F:=\lim_{j\to\infty} F_j\colon M\to\r^n
\]
which is $\epsilon_0$-close to $F$ in the $\Cscr^0(M)$ topology and 
whose restriction to $\mathring M$ is conformal and harmonic. 
To finish the proof, it remains to show that 
\begin{enumerate}[\rm (a)]
\item $\wt F|_{\mathring M}\colon \mathring M\to\r^n$ is a complete immersion,
\item $\Flux(\wt F)=\Flux(F)$, 
\item $\wt F|_{bM}\colon bM\to\r^n$ is injective, and
\item if $n\geq 5$ then $\wt F\colon M\to\r^n$ is injective.
\end{enumerate}

Indeed, take a point $p\in \mathring M$. From $\bigcup_{j=1}^\infty M_j=M$ 
and {\rm (1$_j$)} we see that there exists a number $j_0\in\n$ such that $p\in M_{j}$ for all $j\geq j_0$. We have
\begin{eqnarray*}
 	\left\| \frac{\partial \wt F}{\theta}(p)  \right\| & \geq &   
 	\left\| \frac{\partial F_{j_0}}{\theta} (p) \right\| - \sum_{j>j_0}   
	\left\| \frac{\partial F_j}{\theta}(p) - \frac{\partial F_{j-1}}{\theta}(p)  \right\|
\\
 	& \stackrel{\text{\rm (2$_j$), (6$_j$)}}{>} &   \left\| \frac{\partial F_{j_0}}{\theta}(p) \right\| - 
	\sum_{j>j_0} \tau_j
\\
 	& \stackrel{\text{\eqref{eq:tauj}}}{\geq} &   \left\| \frac{\partial F_{j_0}}{\theta}(p)  \right\|
	\biggl(1-\sum_{j>j_0}\frac1{2^j}\biggr)
\\
	& > & \frac12   \left\| \frac{\partial F_{j_0}}{\theta}(p)  \right\| \;>\; 0.
\end{eqnarray*}
Since this holds for each point $p\in \mathring M$,  $\wt F|_{\mathring M}$ is an immersion. 
As $\wt F$ is a uniform limit on $M$ of conformal harmonic immersions,
it is a conformal harmonic immersion on $\mathring M$,  and hence $\wt F\in\CMI_*^0(M,\r^n)$. 

Property  {\rm (3$_j$)} says that for every $k\in \n$ and all $j\ge k$ we have
$\dist_{F_j}(p_0,bM_k)>k$. Property {\rm (2$_j$)} ensures that the sequence $F_j$ converges to
$\wt F$ in the $\Cscr^1(M_k)$ topology, and hence in the limit we get that
$\dist_{\wt F}(p_0,bM_k) \ge k$. Since this holds for all $k\in\n$, we see that 
$\wt F|_{\mathring M}$ is complete, thereby proving {\rm (a)}.

Property {\rm (b)} is a trivial consequence of {\rm (4$_j$)}, $j\in\n$.

In order to check properties {\rm (c)} and {\rm (d)}, pick a pair of distinct points $p,q\in M$. 
If $n\in\{3,4\}$,  assume that $\{p,q\}\subset bM$. 
Choose $j_0\in\n$ such that ${\rm d}(p,q)>\frac1{j}$ for all $j\geq j_0$. Given $j>j_0$ we have
\begin{eqnarray*}
	\|F_{j-1}(p)-F_{j-1}(q)\| & \leq & \|F_j(p)-F_{j-1}(p)\|+\|F_j(q)-F_{j-1}(q)\|   
\\
	 & & +\|F_j(p)-F_j(q)\|
\\
	 & \stackrel{\text{\rm (2$_j$), (6$_j$)}}{<} & 2\varsigma_j+\|F_j(p)-F_j(q)\|
\\
 	& \stackrel{\text{\eqref{eq:varsigmaj}}}{<} & \frac1{j^2}\|F_{j-1}(p)-F_{j-1}(q)\| + \|F_j(p)-F_j(q)\|.
\end{eqnarray*}
Therefore,
\[
	\|F_j(p)-F_j(q)\| > \left(1-\frac1{j^2}\right) \|F_{j-1}(p)-F_{j-1}(q)\|, \quad  j>j_0,
\]
and hence
\[
	\|F_{j_0+k}(p)-F_{j_0+k}(q)\| > \|F_{j_0}(p)-F_{j_0}(q)\| 
	\prod_{j=j_0+1}^{j_0+k} \left(1-\frac1{j^2}\right), \quad  k\in\n.
\]
Taking limits in the above inequality as $k$ goes to infinity, we conclude that 
\[
	\|\wt F(p)-\wt F(q)\|\geq \frac12\|F_{j_0}(p)-F_{j_0}(q)\| > 0,
\]
where the latter inequality in ensured by {\rm (5$_{j_0}$)}. 

This completes the proof of Theorem \ref{th:Jordan}. 
\end{proof}


\section{Complete proper minimal surfaces in convex domains} \label{sec:proper}

In this section we prove a technical result, Theorem \ref{th:proper2}, which will be used
in the following section to prove Theorems \ref{th:proper}, \ref{th:topology} and Corollary \ref{co:bdddomains}.


Assume that $\Dscr$ is a smoothly bounded, strictly convex domain in $\r^n$, $n\geq 3$.
We denote by $\nu_\Dscr$ the inner normal to $b\Dscr$ and by $\kappa_\Dscr^{\rm max}$ and $\kappa_\Dscr^{\rm min}$ the maximum 
and the minimum of the principal curvatures of points in $b\Dscr$ with respect to $\nu_\Dscr$. Obviously, 
$0<\kappa_\Dscr^{\rm min}\leq\kappa_\Dscr^{\rm max}$. For any real number $-\infty<t<1/\kappa_\Dscr^{\rm max}$ we denote by 
$\Dscr_t$ the smoothly bounded, strictly convex domain bounded by $b\Dscr_t=\{p+t\nu_\Dscr(p)\colon p\in b\Dscr\}$. 
Clearly, $t_1<t_2<1/\kappa_\Dscr^{\rm max}$ implies $\Dscr_{t_2}\Subset\Dscr_{t_1}$, and 
\begin{equation}\label{eq:curvature}
	\frac1{\kappa_{\Dscr_t}^{\rm max}}=\frac1{\kappa_{\Dscr}^{\rm max}}-t,\quad \frac1{\kappa_{\Dscr_t}^{\rm min}}
	=\frac1{\kappa_{\Dscr}^{\rm min}}-t,\quad \text{for all }t<\frac1{\kappa_\Dscr^{\rm max}}.
\end{equation} 

By the classical Minkowski theorem, every convex domain in $\r^n$ can be exhausted by an increasing sequence of 
smoothly bounded, strictly convex domains.


%
%
%
%
\begin{theorem}\label{th:proper2}
Let $n\geq 3$ be a natural number, let $\Lscr\Subset\Dscr\Subset\r^n$ be smoothly bounded (of class at least $\Cscr^2$)
strictly convex domains, and let $\eta>0$ be such that $\Dscr\subset\Lscr_{-\eta}$. 
Let $M$ be a compact bordered Riemann surface and let $F\in \CMI^1(M,\r^n)$ be a conformal minimal immersion with 
$F(bM)\subset \Dscr\setminus\overline\Lscr$. Given a  number $\mu >0$ and a compact set $K\subset \mathring M$ 
there exists a continuous map $\wt F\colon M\to\overline\Dscr$ satisfying the following conditions:
\begin{enumerate}[\rm (i)]
\item $\wt F|_{\mathring M}\colon\mathring M\to \Dscr$ is a complete proper conformal minimal immersion.
\item $\wt F(bM)\subset b\Dscr$ is a finite family of closed curves.
\item $\Flux(\wt F)=\Flux(F)$.
\item $\|\wt F-F\|_{0,M}< \sqrt{2 \eta^2+ {2 \eta}/{\kappa_\Lscr^{\rm min}}}$.
\item $\|\wt F-F\|_{1,K}<\mu$.
\end{enumerate} 
If $n\geq 5$ then  we can choose $\wt F$ to be an embedding on $\mathring M$.
\end{theorem}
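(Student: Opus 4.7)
The plan is to build $\wt F$ as the $\Cscr^0(M)$-limit of a sequence $F_j\in\CMI_*^1(M,\r^n)$ generated by a recursion that interleaves three ingredients: Theorem \ref{th:RHCMI} (Riemann-Hilbert) to push $F_j(bM)$ toward $b\Dscr$, Lemma \ref{lem:Jordan2} to force $\dist_{F_j}(p_0,bM)\to\infty$, and Theorem \ref{th:gp} to preserve the embeddedness of $F_j|_{bM}$ (and, when $n\ge 5$, of $F_j$ itself). The geometric scaffolding is a nested family of smoothly bounded, strictly convex layers $\Lscr=\Lscr_0\Subset\Lscr_1\Subset\cdots\Subset\Dscr$ with $\bigcup_j\Lscr_j=\Dscr$, obtained from the parallel-surface construction $\Lscr_t$ explained before the theorem statement. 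Starting from $F_0:=F$, made nondegenerate via \cite[Theorem 3.1]{AFL}, the inductive invariants will be: $F_j(M)\subset\Dscr$, $F_j(bM)\subset\Dscr\setminus\overline{\Lscr_j}$, $\dist_{F_j}(p_0,bM)>j$, $\Flux(F_j)=\Flux(F)$, and the embedding conditions above.

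In the step $F_j\rightsquigarrow F_{j+1}$, I would partition $bM$ into short subarcs $I_i$ on which $F_j$ oscillates little; at a reference point $q_i\in I_i$, let $\pi_i$ be the nearest point on $b\Lscr_{j+1}$ to $F_j(q_i)$, and choose an orthonormal pair $\bu_i,\bv_i$ in the tangent plane $T_{\pi_i}(b\Lscr_{j+1})$. Applying Theorem \ref{th:RHCMI} with these direction vectors, $\sigma(\zeta,\xi)=\xi$, and a constant size $r\equiv\rho_{j+1}$ supported on $\bigcup_i I_i$, produces $G\in\CMI^1(M,\r^n)$ of the same flux, close to $F_j$ off a thin neighborhood of $bM$, and with $G(bM)$ within $\varepsilon$ of the attached null discs of radius $\rho_{j+1}$ tangent to $b\Lscr_{j+1}$. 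Strict convexity implies that a tangential push of magnitude $\rho_{j+1}$ from $b\Lscr_{j+1}$ exits $\Lscr_{j+1}$ by at least $(\kappa_{\Lscr_{j+1}}^{\min}/2)\rho_{j+1}^2$ in the inward normal direction; balancing this against the normal width $s_{j+1}$ of $\Lscr_{j+1}\setminus\overline{\Lscr_j}$ via $\rho_{j+1}\simeq\sqrt{2s_{j+1}/\kappa_\Lscr^{\min}}$ secures $G(bM)\subset\Dscr\setminus\overline{\Lscr_{j+1}}$, while taking $\rho_{j+1}$ small enough preserves $G(M)\subset\Dscr$ by continuity. I would then apply Lemma \ref{lem:Jordan2} to raise the intrinsic distance past $j+1$, at a $\Cscr^0(M)$ cost small enough to keep all previous inclusions, and Theorem \ref{th:gp} to restore the embedding properties.

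The decisive estimate is (iv). The displacement $F_{j+1}(p)-F_j(p)$ for $p\in bM$ decomposes into a tangential part of size $\lesssim\rho_{j+1}$ and a radial (inward-normal) part of size $\lesssim s_{j+1}$. The radial contributions accumulate to at most $\eta$; the tangential contributions lie in the slowly varying tangent plane at the foot point $\pi_i$, so their vectorial accumulation is bounded by $\sqrt{\sum \rho_{j+1}^2}\lesssim\sqrt{2\eta/\kappa_\Lscr^{\min}}$, using the balance above and $\sum s_{j+1}\le\eta$. A Pythagoras-type argument in the moving tangent plane then bounds the total $\Cscr^0(M)$-deviation on $bM$ by $\sqrt{2\eta^2+2\eta/\kappa_\Lscr^{\min}}$, with the factor of two in the radial term absorbing the slack contributed by Lemma \ref{lem:Jordan2}, Theorem \ref{th:gp}, and the fact that $F(bM)$ starts strictly inside the shell; the Maximum Principle extends the bound from $bM$ to $M$. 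The $\Cscr^1(K)$-summability of the increments (which we enforce at each step) yields (v) and $\Cscr^1_{\mathrm{loc}}(\mathring M)$-convergence of $(F_j)$ to a conformal minimal immersion $\wt F|_{\mathring M}$; $\wt F(bM)\subset b\Dscr$ follows from $\Lscr_j\nearrow\Dscr$, properness is automatic, and completeness and flux pass to the limit as in the proof of Theorem \ref{th:Jordan}. The main obstacle is the precise $\Cscr^0$-accounting in step three: one must choose the foot points $\pi_i$ and the widths $s_{j+1}$ so that successive tangential pushes remain near-collinear (equivalently, that $F_j(bM)$ moves quasi-radially across the shell), since otherwise the accumulated drift could wrap around $b\Dscr$ and exceed the claimed constant.
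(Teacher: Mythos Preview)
Your overall architecture is right, but the argument for the $\Cscr^0$ bound \textup{(iv)} has a genuine gap that you yourself flag at the end. Successive tangential pushes at different steps lie in \emph{different} tangent hyperplanes to the shells $b\Lscr_{j+1}$, so they are neither orthogonal (which your bound $\sqrt{\sum\rho_{j+1}^2}$ would require) nor controllably collinear. With $\rho_{j+1}\simeq\sqrt{2s_{j+1}/\kappa_\Lscr^{\min}}$ and only $\sum s_{j+1}\le\eta$, the triangle inequality gives at best $\sum\sqrt{s_{j+1}}$, which need not be bounded. There is no mechanism in the Riemann--Hilbert step forcing the directions of successive displacements to be nearly orthogonal, and trying to make them quasi-radial (your last sentence) runs into the opposite problem: radial pushes along a null direction are not available, since the tangent plane at the foot point is what the pair $(\bu_i,\bv_i)$ spans.

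The paper avoids this accumulation issue by organizing the recursion differently. Its Lemma~\ref{lem:proper} crosses essentially the \emph{entire} shell $\Dscr\setminus\overline\Lscr$ in a single step, already with the displacement bound $\sqrt{2\eta^2+2\eta/\kappa_\Lscr^{\min}}$; then the subsequent steps cross increasingly thin shells $\Dscr\setminus\overline{\Dscr_{\delta_j}}$ with $\delta_j\searrow 0$ chosen so that the plain triangle-inequality sum $\sqrt{2(\eta-\epsilon)^2+2(\eta-\epsilon)/\kappa_{\Lscr_{-\epsilon}}^{\min}}+\sum_{j\ge1}\sqrt{2\delta_j^2+2\delta_j/\kappa_\Dscr^{\min}}$ stays below the target. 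No Pythagoras is needed between steps. A second point you are missing is that Theorem~\ref{th:RHCMI} requires the size function $r$ to be supported on \emph{proper} subarcs $I_i\subsetneq bM$, so boundary points near arc endpoints are essentially unmoved by the Riemann--Hilbert deformation; the paper's Lemma~\ref{lem:proper} first pushes those junction points outward via Mergelyan approximation plus the Forstneri\v c--Wold technique of exposing boundary points (Lemma~\ref{lem:arcs}), and only then applies Riemann--Hilbert on the remaining arcs.
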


Unfortunately we are unable to ensure that the frontier 
$\wt F(b M)\subset b\Dscr$ consists of Jordan curves even when $n\geq 5$;
the reason is explained in Remark \ref{rem:Jordan} at the end of the section.

The proof Theorem \ref{th:proper2}  uses an inductive procedure in which we alternately apply
the following two types of deformations to a conformal minimal immersion $F\colon M\to \r^n$:
\begin{itemize}
\item[\rm (i)] Push the boundary $F(bM)$  closer to  $b\Dscr$
while keeping the resulting immersion suitably close to $F$ in the 
$\Cscr^0(M)$ sense, depending on how far is $F(bM)$ from $b\Dscr$.
This deformation is provided by Lemma \ref{lem:proper} below.
\item[\rm (ii)] Increase the interior boundary distance of the immersion by a prescribed 
(arbitrarily big) amount by a deformation which is arbitrarily small in the $\Cscr^0(M)$ 
sense. Such deformation is provided by Lemma \ref{lem:Jordan2} in Sec.\ \ref{sec:Jordan}.
\end{itemize}
The resulting sequence of conformal minimal immersions $F_k\colon M\to \Dscr$
$(k\in\z_+)$ converges uniformly on $M$ to a continuous map $\wt F\colon M\to\overline\Dscr$
satisfying Theorem \ref{th:proper2}. 

We begin with technical preparations.


\begin{lemma}\label{lem:proper}
Let $n\ge 3$, let $\Lscr\Subset\Dscr$ be smoothly bounded, strictly convex domains in $\r^n$, 
and let $\eta>0$ be such that $\Dscr\subset \Lscr_{-\eta}$. 
Let $M$ be a compact bordered Riemann surface and let $F\in\CMI_*^1(M,\r^n)$. 
Assume that for a compact set $K\subset \mathring M$ we have
\begin{equation}\label{eq:lemproper}
	F(M\setminus \mathring K)\subset \Dscr\setminus\overline\Lscr.
\end{equation}
Given a number $\delta$   
satisfying $0<\delta<1/\kappa_\Dscr^{\rm max}$,  $F$ can be approximated as closely as desired 
in the $\Cscr^1(K)$ topology by a conformal minimal immersion $\wt F\in\CMI_*^1(M,\r^n)$ 
enjoying the following properties:
\begin{enumerate}[\rm (a)]
\item $\|\wt F-F\|_{0,M}< \sqrt{2 \eta^2+ {2 \eta}/{\kappa_\Lscr^{\rm min}}}$.
\item $\wt F(M\setminus \mathring K)\subset \Dscr\setminus\overline\Lscr$.
\item $\wt F(bM)\subset \Dscr\setminus\overline\Dscr_\delta$.
\item $\Flux(\wt F)=\Flux(F)$. 
\end{enumerate}
\end{lemma}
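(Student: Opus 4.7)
The strategy is to apply Theorem~\ref{th:RHCMI} at finitely many short boundary arcs of $M$, attaching small minimal discs whose direction vectors lie in tangent hyperplanes to $b\Lscr$ at nearest points to $F(bM)$. Convexity of $\Lscr$ will keep each such disc outside $\overline\Lscr$, while the hypothesis $\Dscr\subset\Lscr_{-\eta}$ together with the curvature bound $\kappa_\Lscr^{\rm min}$ controls the disc radii and permits carrying $F(bM)$ all the way across the slab $\Dscr\setminus\overline\Lscr$ into $\Dscr\setminus\overline\Dscr_\delta$.

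First I would fix a compact domain $K'$ with $K\Subset\mathring{K'}\Subset M$ and $F(M\setminus\mathring{K'})\subset\Dscr\setminus\overline\Lscr$, and partition $bM$ into pairwise disjoint compact arcs $I_1,\ldots,I_N$ on each of which $F$ has oscillation less than some prescribed $\epsilon_0>0$. For each $i$ pick $x_i\in I_i$, set $p_i=F(x_i)$, let $q_i\in b\Lscr$ be the point closest to $p_i$, and pick orthonormal vectors $\bu_i,\bv_i$ in the tangent hyperplane $\nu_\Lscr(q_i)^\perp$. The geometric heart of the argument is a supporting-sphere/Pythagorean estimate: writing $p_i=q_i-t_i\nu_\Lscr(q_i)$ with $t_i\in(0,\eta]$, for any unit $\bw\in\nu_\Lscr(q_i)^\perp$ the point $p_i+s\bw$ lies in the open half-space $\{\langle y-q_i,-\nu_\Lscr(q_i)\rangle>0\}$ (hence outside $\overline\Lscr$ by convexity) for every $s\ge 0$, and one has $p_i+s\bw\in\overline{\Lscr_{-\eta}}\supset\overline\Dscr$ as long as
\[
s^2 \le (\eta-t_i)\bigl(2/\kappa_\Lscr^{\rm min}+\eta+t_i\bigr) \le 2\eta^2+2\eta/\kappa_\Lscr^{\rm min},
\]
the bound being sharp in the model case where $\Lscr$ is a ball of radius $1/\kappa_\Lscr^{\rm min}$.

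Next, choose a continuous size function $r\colon bM\to\r_+$ supported on $\bigcup I_i$, strictly less than this upper bound, and such that for each $i$ the circle $\{p_i+r(x_i)(\Re\xi\,\bu_i+\Im\xi\,\bv_i):\xi\in\t\}$ lies in $\Dscr\setminus\overline\Dscr_\delta$; this is feasible provided $\epsilon_0$ is small and $\delta<1/\kappa_\Dscr^{\rm max}$. With the data $(F,\{I_i\},\{\bu_i,\bv_i\},r,\sigma(x,\xi)=\xi)$, Theorem~\ref{th:RHCMI} supplies $\wt F\in\CMI^1(M,\r^n)$ with $\Flux(\wt F)=\Flux(F)$ (proving~(d)), approximating $F$ in $\Cscr^1$ on $M\setminus\Omega$ for an arbitrarily small neighbourhood $\Omega\subset M\setminus K'$ of $\bigcup I_i$ (giving the required $\Cscr^1(K)$-closeness and (a),(b) outside $\Omega$), and satisfying $\dist(\wt F(x),\varkappa(x,\t))<\epsilon$ on $bM$ and $\dist(\wt F(x),\varkappa(\rho(x),\cd))<\epsilon$ on $\Omega$. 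By convexity of $\Dscr$ each disc $\varkappa(\rho(x),\cd)$ lies in $\overline\Dscr$, by the half-space observation it avoids $\overline\Lscr$, and its bounding circle lies in $\Dscr\setminus\overline\Dscr_\delta$; this yields (b) on $\Omega$, (c) on $bM$, and the $\Cscr^0$-bound in (a) via $\|\wt F(x)-F(x)\|\le r(\rho(x))+O(\epsilon_0,\epsilon)<\sqrt{2\eta^2+2\eta/\kappa_\Lscr^{\rm min}}$. A final $\Cscr^1$-small perturbation via \cite[Theorem~3.1(a)]{AFL} restores nondegeneracy if needed.

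The delicate point is balancing the geometric slack on each arc: reaching $b\Dscr_\delta$ in property~(c) forces $r(x_i)$ close to the upper bound $\sqrt{2\eta^2+2\eta/\kappa_\Lscr^{\rm min}}$, so one must simultaneously keep the total $\Cscr^0$-displacement strictly below this value, the arc-size $\epsilon_0$ small, and the Riemann--Hilbert parameter $\epsilon$ small, which requires a careful coordination of slacks depending on $\delta$ and the local geometry of $b\Lscr$ at each $q_i$.
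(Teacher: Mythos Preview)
Your overall geometric picture is correct and matches the paper's, but there is a genuine gap in the way you invoke Theorem~\ref{th:RHCMI}. That theorem requires the arcs $I_1,\ldots,I_k$ to be \emph{pairwise disjoint proper subarcs} of $bM$, with the size function $r$ supported on $\bigcup_i I_i$; in particular $r$ must vanish on $bM\setminus\bigcup_i I_i$ (and, for $\varkappa$ to be continuous, also at the endpoints of each $I_i$, since your direction pairs $(\bu_i,\bv_i)$ change from arc to arc). Hence there is necessarily a nonempty set of boundary points where $r=0$, and at any such point $p$ the map $\varkappa(p,\cdot)\equiv F(p)$, so $\wt F(p)$ is forced to stay $\epsilon$-close to $F(p)\in\Dscr\setminus\overline\Lscr$. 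There is no reason for $F(p)$ to already lie in $\Dscr\setminus\overline\Dscr_\delta$, so property~(c) fails at these points. You cannot cover all of $bM$ with a single application of the Riemann--Hilbert lemma.

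This is exactly why the paper inserts a preliminary step (Lemma~\ref{lem:arcs}) before the Riemann--Hilbert deformation: using Mergelyan approximation together with the Forstneri\v c--Wold technique of exposing boundary points, one first pushes the finitely many endpoints $p_{i,j}$ of the chosen arc decomposition into $\Dscr\setminus\overline\Dscr_{\delta/2}$, obtaining an intermediate immersion $F_0$ that already satisfies~(c) on small neighbourhoods of the $p_{i,j}$. Only then does one apply Theorem~\ref{th:RHCMI} on slightly smaller subarcs $I_{i,j}\Subset\alpha_{i,j}$ away from the endpoints, where a positive $r$ is allowed; near the endpoints $r$ may vanish because $F_0$ is already where it needs to be. Your argument is missing this exposing step, and without it the single Riemann--Hilbert application cannot deliver property~(c) on all of $bM$.
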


\begin{proof} 
The main idea is to perturb $F$ near $bM$ in such a way that the image of each point $p\in bM$ is moved into 
the convex shell $\Dscr\setminus\overline\Dscr_\delta$  by pushing it in a direction orthogonal to the inner unit normal $\nu_\Lscr(p)$ of $b\Lscr$ at $p$. 
By Pythagoras' theorem and basic theory of convex domains, condition \eqref{eq:lemproper} ensures that it will be enough to push each point a distance 
smaller than $\sqrt{\eta^2+ {2\eta}/{\kappa_\Lscr^{\rm min}}}$, allowing us to guarantee condition {\rm (a)}.

We may assume that $M$ is a smoothly bounded domain in an open Riemann surface $\wt M$. 
Without loss of generality we may also assume that $\delta>0$ is small enough so that $\overline\Lscr\subset \Dscr_\delta$.

In view of \eqref{eq:lemproper} we may choose a constant $\varsigma>0$ such that 
\begin{equation}\label{eq:eta0}
	\overline\Lscr_{-\varsigma}\subset\Dscr_{\delta},\quad F(M\setminus \mathring K)\subset \Dscr\setminus\overline\Lscr_{-\varsigma}.
\end{equation}
Pick another constant $c>0$ to be specified later. For every point $\bx\in b\Lscr$ set 
\begin{equation} \label{eq:cball}
B_\bx:=b\Lscr \cap \b_{\bx}(c),
\end{equation}
 where $\b_{\bx}(c)$ denotes the open euclidean ball in $\r^n$ centered at $\bx$ with radius $c>0$. Set
\begin{equation}\label{eq:Ox}
	O_\bx=\Dscr\cap\{\by-t\nu_\Lscr(\by) \colon  \by\in B_\bx,\; t>\varsigma\}\subset \Dscr\setminus\overline\Lscr_{-\varsigma}.
\end{equation} 
Assume that $c>0$ is small enough so that $B_\bx$ is a topological open ball and
\begin{equation}\label{eq:Ox1}
	O_\bx \subset \wt O_\bx:= \{\by\in\Dscr \colon \langle\by-\bx,\nu_\Lscr(\bx)\rangle 
 	<-\varsigma/2\} \subset \Dscr\setminus\overline\Lscr_{-\varsigma/2} \quad \forall \bx\in b\Lscr
\end{equation}
\begin{figure}[ht]
    \begin{center}
    \scalebox{0.28}{\includegraphics{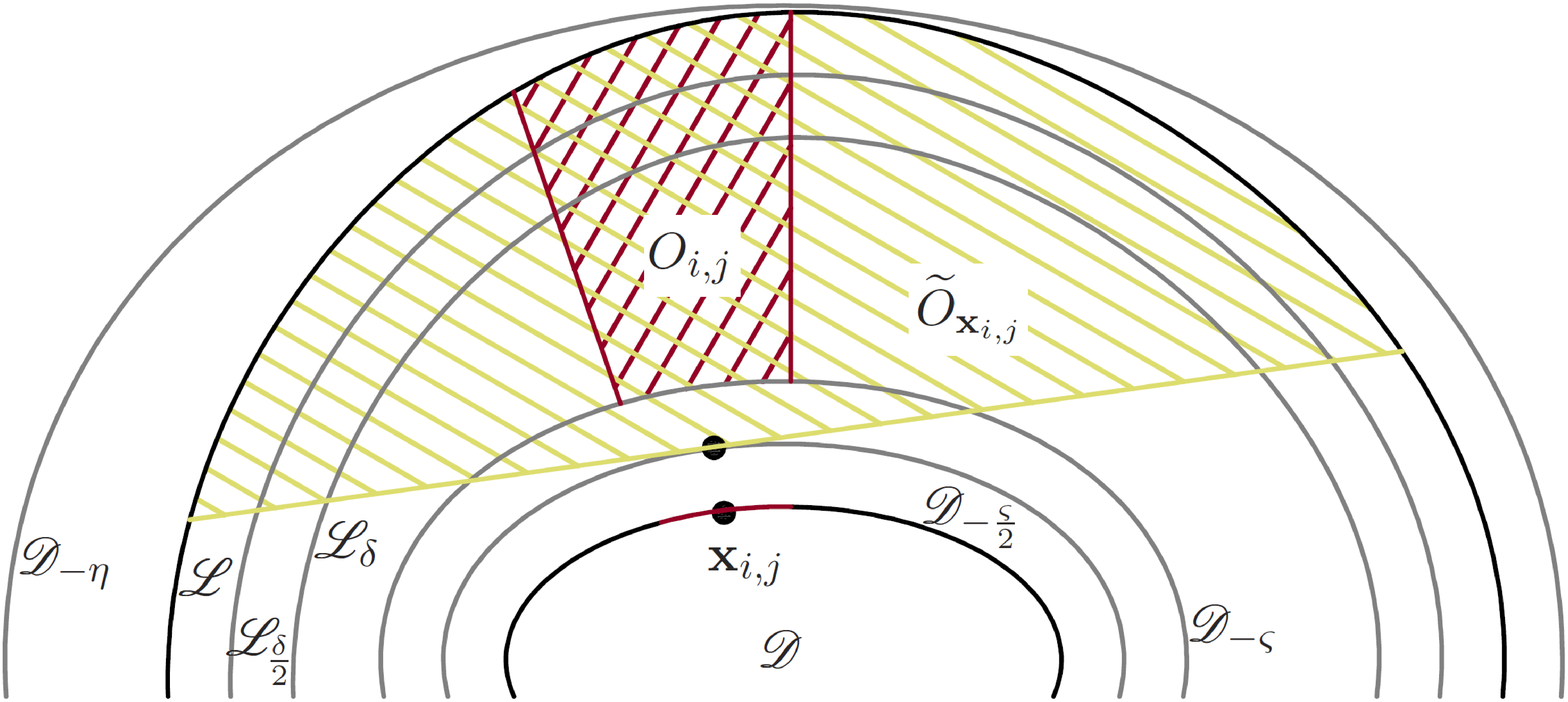}}
        \end{center}
\caption{The convex domains $\Lscr$ and $\Dscr$.}
\label{fig:D-L}
\end{figure}
(see Figure \ref{fig:D-L}). Observe that \eqref{eq:Ox1}  holds in the limit case $c=0$.
Set $\Ocal:=\{O_\bx\colon \bx\in b\Lscr\}$ and notice that $\Dscr\setminus\overline\Lscr_{-\varsigma}=\bigcup_{\bx\in b\Lscr}O_\bx$. 
Denote by $\alpha_1,\ldots,\alpha_k$ the connected boundary curves of $M$. Since $\Ocal$ is an open covering of the compact set 
$F(bM)\subset\Dscr\setminus\overline\Lscr_{-\varsigma}$ (cf. \eqref{eq:eta0}), there exist a natural number $l\geq 3$ and 
compact connected subarcs $\{\alpha_{i,j}\colon (i,j) \in \ttJ:=\{1,\ldots,k\}\times \z_l\}$ satisfying 
\[
	\bigcup_{j\in\z_l} \alpha_{i,j}=\alpha_i\quad \text{for all $i\in\{1,\ldots,k\}$},
\]
and, for every $(i,j)\in\ttJ$, $\alpha_{i,j}$ and $\alpha_{i,j+1}$ have a common endpoint $p_{i,j}$ and are otherwise disjoint, 
$\alpha_{i,j}\cap\alpha_{i,a}=\emptyset$ for all $a\in\z_l\setminus\{j-1,j,j+1\}$, and
\begin{equation}\label{eq:alphaU}
	F(\alpha_{i,j})\subset O_{i,j}:=O_{\bx_{i,j}}\in\Ocal\quad \text{for some $\bx_{i,j}\in b\Lscr$}.
\end{equation}

\begin{lemma}\label{lem:arcs}
(Notation as in Lemma \ref{lem:proper}.) Let $\varsigma>0$ be
such that (\ref{eq:eta0}) holds.
Given a number $\epsilon_1>0$, there exists $F_0\in\CMI_*^1(M,\r^n)$ satisfying the following properties:
\begin{enumerate}[\rm ({P}1)]
\item $F_0$ is $\epsilon_1$-close to $F$ in the $\Cscr^1(K)$ topology.
\item $F_0(p_{i,j})\in \Dscr\setminus\overline\Dscr_{\delta/2}$ for all $(i,j)\in\ttJ$.
\item $F_0(\alpha_{i,j})\subset O_{i,j}$ for all $(i,j)\in\ttJ$.
\item $F_0(M\setminus \mathring K)\subset \Dscr\setminus \overline\Lscr_{-\varsigma}$.
\item $\Flux(F_0)=\Flux (F)$.
\end{enumerate}
\end{lemma}
\begin{proof}
For each $(i,j)\in\ttJ$ we choose an arc $\gamma_{i,j}\subset \wt M$ with the endpoint $p_{i,j}\in bM$ 
and otherwise disjoint from $M$ such that the arcs $\gamma_{i,j}$, $(i,j)\in\ttJ$, are pairwise disjoint and 
\[
	S:=M\cup\bigcup_{(i,j)\in\ttJ} \gamma_{i,j}\subset\wt M
\]
is an {\em admissible set} in the sense of \cite[Def.\ 5.1]{AFL}. 
(This implies that the Mergelyan approximation theorem holds on $S$.)
Let $v\colon S\to\Dscr\subset \r^n$ be a smooth map satisfying the following conditions:
\begin{enumerate}[\rm (i)]
\item $v=F$ on a neighborhood of $M$.
\item $v(\gamma_{i,j})\subset O_{i,j}\cap O_{i,j+1}$ and $v(q_{i,j})\in \Dscr\setminus \overline\Dscr_{\delta/2}$, where $q_{i,j}$ 
is the endpoint of $\gamma_{i,j}$ different from $p_{i,j}$, for all $(i,j)\in\ttJ$. 
(Observe that 
$O_{i,j}\cap O_{i,j+1}=\Dscr\cap\{\by-t\nu_\Lscr(\by) \colon  \by\in B_{\bx_{i,j}}\cap B_{\bx_{i,j+1}},\; t>\varsigma\}\neq\emptyset$. 
See \eqref{eq:Ox}, \eqref{eq:alphaU}, and Figure \ref{fig:D-L}).
\end{enumerate}

Pick a number $\epsilon_2>0$ which will be specified later. As in the proof of Lemma \ref{lem:step1}, we may use the Mergelyan theorem 
for conformal minimal immersions \cite[Theorem 5.3]{AFL} to obtain $G\in\CMI_*(\wt M,\r^n)$ satisfying the following properties:
\begin{enumerate}[\rm (i)]
\item[\rm (iii)]  $G$ is $\epsilon_2$-close to $v$ in the $\Cscr^1(M)$ and the $\Cscr^0(S)$ topologies.
\item[\rm (iv)] $\Flux(G)=\Flux(F)$.
\end{enumerate}

Let $V\subset \wt  M$ be a small open neighborhood of $S$. If $\epsilon_2>0$ is small enough, 
then properties {\rm (i)}, {\rm (ii)}, and {\rm (iii)} ensure the existence of small open neighborhoods 
$U_{i,j}$ of $\alpha_{i,j}$, $V_{i,j}$ of $\gamma_{i,j}$, and $W'_{i,j}\Subset W_{i,j}$ of $p_{i,j}$ in 
$V \setminus K$, $(i,j)\in \ttJ$,  satisfying the following conditions:
\begin{enumerate}[\rm (i)]
\item[\rm (v)] $V_{i,j}\cap M\Subset W'_{i,j}\Subset W_{i,j}\Subset U_{i,j}\cap U_{i,j+1}\Subset V\setminus K$.
\item[\rm (vi)] $G(V_{i,j-1}\cup U_{i,j}\cup V_{i,j})\subset O_{i,j}$. (Take into account \eqref{eq:alphaU}.)
\item[\rm (vii)] $G(q_{i,j})\in \Dscr\setminus\Dscr_{\delta/2}$.
\end{enumerate}
Without loss of generality we assume in addition that the compact sets $\overline W_{i,j}\cup\overline V_{i,j}$, $(i,j)\in \ttJ$, 
are pairwise disjoint. 

By \cite[Theorem 2.3]{FW0} (see also \cite[Theorem 8.8.1, p.\ 365]{F2011}), there exists a smooth 
diffeomorphism $\phi\colon M\to\phi (M)\subset V$ satisfying the following properties:
\begin{enumerate}[\rm (i)]
\item[\rm (viii)] $\phi \colon  \mathring M \to  \phi(\mathring M)$ is a biholomorphism.
\item[\rm (ix)] $\phi$ is as close as desired to the identity in the $\Cscr^1$ topology on $M \setminus \bigcup_{(i,j)\in \ttJ} W'_{i,j}$. 
\item[\rm (x)] $\phi(p_{i,j}) = q_{i,j}\in b\,\phi(M)$ and $\phi(M \cap W'_{i,j}) \subset W_{i,j}\cup V_{i,j}$ for all $(i,j)\in \ttJ$.
\end{enumerate}

Set $F_0:=G\circ\phi\in\CMI_*^1(M,\r^n)$.
If $\epsilon_2>0$ is small enough and the approximation in {\rm (ix)} is close enough, then $F_0$ satisfies the conclusion of the lemma. 
Indeed, {\rm (P1)} is ensured by {\rm (i)}, {\rm (iii)}, and {\rm (ix)}; 
{\rm (P2)} by {\rm (vii)} and {\rm (x)}; {\rm (P3)} by {\rm (ix)}, {\rm (x)}, {\rm (v)}, and {\rm (vi)}; {\rm (P4)} by {\rm (ix)}, 
{\rm (i)}, {\rm (iii)}, \eqref{eq:eta0}, {\rm (x)}, {\rm (vi)}, and \eqref{eq:Ox}; and {\rm (P5)} by {\rm (iv)} and the definition of $F_0$.
\end{proof}

We continue with the proof of Lemma \ref{lem:proper}. 
Let $F_0\in\CMI_*^1(M,\r^n)$ be provided by Lemma \ref{lem:arcs} for some $\epsilon_1>0$ which will be specified later.
In view of {\rm (P2)} and {\rm (P3)}, each arc $\alpha_{i,j}$ contains a proper connected compact subarc 
$I_{i,j}\Subset\alpha_{i,j}$ such that 
\begin{equation}\label{eq:IO}
	F_0(\overline{\alpha_{i,j}\setminus I_{i,j}})\subset (\Dscr\setminus\overline\Dscr_{\delta/2})\cap O_{i,j}.
\end{equation}
Here $O_{i,j}:=O_{\bx_{i,j}}\in\Ocal$ for a certain point $\bx_{i,j}\in b\Lscr$, cf.\ \eqref{eq:alphaU}.

Pick an annular neighborhood $A\subset M\setminus K$ of $bM$ and a smooth retraction $\rho\colon A\to bM$.
Choose pairwise disjoint, smoothly bounded closed disc $\overline D_{i,j}$ in $A$, $(i,j)\in\ttJ$, such that 
\begin{equation}\label{eq:Dij2}
	I_{i,j}\Subset \overline D_{i,j}\cap\alpha_{i,j}, \quad 
	\rho(\overline D_{i,j})\subset \overline D_{i,j}\cap\alpha_{i,j}, 
	\quad\text{and}\quad F_0(\overline D_{i,j})\subset O_{i,j}.
\end{equation}

Set $I:=\bigcup_{(i,j)\in\ttJ} I_{i,j}$. Choose pairs of unitary orthogonal vectors 
$\{\bu_{i,j},\bv_{i,j}\}\subset\langle \nu_\Lscr(\bx_{i,j})\rangle^\bot$, where $\bx_{i,j}\in b\Lscr$ were given in 
\eqref{eq:alphaU}, $(i,j)\in \ttJ$, and consider a continuous map  
$\varkappa\colon bM \times\overline{\d}\to\r^n$ of the form
\[
	\varkappa(x,\xi)=\left\{\begin{array}{ll}
	F_0(x),   & x\in bM\setminus I \\
	F_0(x) + r(x)\,(\Re \sigma(x,\xi) \bu_{i,j} + \Im \sigma(x,\xi)  \bv_{i,j}), & x\in I_{i,j},\; (i,j)\in\ttJ,
	\end{array}\right.
\] 
where $r\colon bM\to\r_+$ and $\sigma\colon I\times\overline\d\to\c$ are functions as in Theorem \ref{th:RH} such that 
\begin{equation}\label{eq:convexdiscs}
  	\varkappa(bM\times \t)\subset \Dscr\setminus\overline\Dscr_\delta.
\end{equation}
Such functions clearly exist; one can for instance take $r$ and $\sigma$ so that $\varkappa(x,\overline\d)$ is the planar 
convex disc $\overline\Dscr_{\delta/2}\cap (F_0(x)+\span\{\bu_{i,j},\bv_{i,j}\})$ for all $x\in bM$.

From properties  {\rm (P3)}, \eqref{eq:Ox1}, \eqref{eq:IO}, and \eqref{eq:convexdiscs}
we infer that
\begin{equation}\label{eq:varDT}
	\varkappa(bM\times \overline\d)\subset \Dscr\setminus\overline\Lscr_{-\varsigma/2}.
\end{equation}

Fix a number $\epsilon_3>0$ which will be specified later.  Theorem \ref{th:RHCMI} furnishes arbitrarily small open neighborhoods 
$\Omega_{i,j}\subset \overline D_{i,j}$ of   $I_{i,j}$ in $M$, $(i,j)\in \ttJ$, and a conformal minimal immersion 
$\wt F\in \CMI_*^1(M,\r^n)$ satisfying the following properties:
\begin{enumerate}[\rm ({P}1)]
\item[\rm (P6)] $\dist(\wt F(x),\varkappa(x,\t))<\epsilon_3$ for all $x\in bM$.
\item[\rm (P7)] $\dist(\wt F(x),\varkappa(\rho(x),\overline{\d}))<\epsilon_3$ for all $x\in \Omega:=\bigcup_{(i,j)\in\ttJ}\Omega_{i,j}\subset M\setminus K$.
\item[\rm (P8)] $\wt F$ is $\epsilon_3$-close to $F_0$ in the $\Cscr^1$ topology on $M\setminus \Omega$.
\item[\rm (P9)] $\Flux (\wt F)=\Flux (F_0)$.
\end{enumerate}

Let us check that the immersion $\wt F$ satisfies the conclusion of Lemma \ref{lem:proper} provided that
the positive numbers $\epsilon_1$ and $\epsilon_3$ are small enough.

First of all, properties {\rm (P8)} and {\rm (P1)} ensure that $\wt F$ is as close to $F$ in the $\Cscr^1(K)$ topology as desired
if $\epsilon_1$ and $\epsilon_3$ are small enough (observe that $K\subset M\setminus \Omega$).

Pick a point $p\in bM$ and let $(i,j)\in\ttJ$ with $p\in\alpha_{i,j}$. 
In view of {\rm (P3)}, \eqref{eq:varDT}, \eqref{eq:Ox1}, and {\rm (P6)}, we have 
\begin{equation}\label{eq:wtFp}
\wt F(p)\in \wt O_{\bx_{i,j}} \setminus\overline\Dscr_\delta \subset \Dscr.
\end{equation}
This proves condition {\rm (c)} in the lemma. Since $\Lscr_{-\eta}$ is smoothly bounded and strictly convex, it is contained in the 
Euclidean ball in $\r^n$ centered at $\bx_{i,j}+\frac1{\kappa_\Lscr^{\rm min}}\nu_\Lscr(\bx_{i,j})$ with radius 
$1/\kappa^{\rm min}_{\Lscr_{-\eta}}=\eta + 1/{\kappa_\Lscr^{\rm min}}$; cf.\ \eqref{eq:curvature}. Therefore, 
taking into account that $\Dscr\subset \Lscr_{-\eta}$ and \eqref{eq:wtFp}, Pythagoras' theorem ensures that 
\[
 \|\wt F(p)- (\bx_{i,j}-t\nu_\Lscr(\bx_{i,j}))\|<\sqrt{2\eta^2+\big(\frac2{\kappa_\Lscr^{\rm min}} 
  -\varsigma\big)\eta-\frac{\varsigma}{\kappa_\Lscr^{\rm min}}} \quad \text{for all $t\in \big(\frac{\varsigma}2,\eta\big)$.}
\]
Since $F(p)$ lies in the convex domain $\wt O_{\bx_{i,j}}\subset\Lscr_{-\eta}$ (see \eqref{eq:alphaU} and \eqref{eq:Ox1}), 
we have that $t_p:=\langle F(p)-\bx_{i,j}, -\nu_\Lscr(\bx_{i,j})\rangle \in (\varsigma/2,\eta)$.
Together with \eqref{eq:alphaU} and \eqref{eq:Ox}, and taking into account \eqref{eq:curvature}, basic trigonometry gives
\[
\|F(p)-(\bx_{i,j}-t_p\nu_\Lscr(\bx_{i,j}))\|<c (\eta \kappa_{\Lscr}^{\rm max}+1) \sqrt{1-c^2 (\kappa_{\Lscr}^{\rm max})^2/4},
\] 
where $c>0$ is the constant given in \eqref{eq:cball}. 
The last two inequalities ensure $\|\wt F(p)-F(p)\|< \sqrt{2\eta^2+2\eta/\kappa_\Lscr^{\rm min}}$, proving {\rm (a)}, 
provided that $c>0$ is chosen small enough.

In order to check {\rm (b)}, notice that, if $\epsilon_3>0$ is sufficiently small, {\rm (P8)} and {\rm (P4)} give that 
$\wt F(M\setminus (K\cup\Omega))\subset \Dscr\setminus \overline\Lscr_{-\varsigma}$. On the other hand, {\rm (P7)} 
and \eqref{eq:varDT} guarantee that $\wt F(\Omega)\subset \Dscr\setminus \overline\Lscr_{-\varsigma/2}$ as well.

Finally, {\rm (P5)} and {\rm (P9)} trivially imply {\rm (d)}. This concludes the proof of  Lemma \ref{lem:proper}.
\end{proof}


\begin{proof}[Proof of Theorem \ref{th:proper2}]
By the Mergelyan theorem for conformal minimal immersions \cite{AFL}, we may assume that $F\in\CMI_*^1(M,\r^n)$. 
Moreover, if $n\geq 5$, we may also assume that $F$ is an embedding; see Theorem \ref{th:gp}. 

Let  $\mu >0$ and $\eta>0$ be as in the theorem. Since $F(bM)\subset \Dscr\setminus\overline\Lscr$, there exist 
 a number $\epsilon >0$ and a smoothly bounded compact domain $K_0\subset \mathring M$ which is a 
strong deformation retract of $M$ such that $K\subset K_0$,  $\overline \Lscr_{-\epsilon}\subset \Dscr$, and
\[
F(M\setminus \mathring K_0)\subset \Dscr\setminus\overline\Lscr_{-\epsilon}.
\]
Since $\epsilon>0$, it follows from \eqref{eq:curvature} that 
$\sqrt{2(\eta-\epsilon)^2+{2(\eta-\epsilon)}/{\kappa_{\Lscr_{-\epsilon}}^{\rm min}}}< \sqrt{2\eta^2+{2\eta}/{\kappa_\Lscr^{\rm min}}}$; 
we therefore may choose a sequence $-1/\kappa_\Dscr^{\rm max}>\delta_1>\delta_2>\cdots >\lim_{j\to\infty}\delta_j=0$ satisfying 
$\overline\Lscr_{-\epsilon}\subset \Dscr_{\delta_1}$ and 
\begin{equation}\label{eq:sumetas}
\sqrt{2(\eta-\epsilon)^2+\frac{2(\eta-\epsilon)}{\kappa_{\Lscr_{-\epsilon}}^{\rm min}}}+
\sum_{j\geq 1} \sqrt{2\delta_j^2+\frac{2\delta_j}{\kappa_\Dscr^{\rm min}}} < 
 \sqrt{2\eta^2+\frac{2\eta}{\kappa_\Lscr^{\rm min}}}.
\end{equation}
Set $F_0:=F$, $\delta_0:=\eta-\epsilon$, $\Bscr^0:=\Lscr_{-\epsilon}$, and $\Bscr^j:=\Dscr_{\delta_j}$ for all $j\in\n$. 
Fix a point $p_0\in \mathring K$ and a number $\epsilon_0$ with $0<\epsilon_0<\mu$.

By recursively applying Lemma \ref{lem:proper}, Lemma \ref{lem:Jordan2}, and Theorem \ref{th:gp} we may 
construct a sequence $\{\Xi_j=(K_j, F_j, \epsilon_j)\}_{j\in\n}$, where 
$K_j$ is a smoothly bounded compact domain in $\mathring M$ which is a strong deformation retract of $M$
and we have $\bigcup_{j\in\n} K_j=\mathring M$,  $F_j\in\CMI_*^1(M,\r^n)$, and $\epsilon_j>0$, 
satisfying the following conditions for all $j\in\n$:
\begin{enumerate}[\rm (a{$_j$})]
\item $K_{j-1}\subset \mathring K_j$.
\item $F_j$ is $\epsilon_j$-close to $F_{j-1}$ in the $\Cscr^1(K_{j-1})$ topology.
\item $\|F_j-F_{j-1}\|_{0,M}<\sqrt{2\delta_{j-1}^2+{2\delta_{j-1}}/{\kappa_{\Bscr^{j-1}}^{\rm min}}}$.
\item $F_j(M\setminus \mathring K_{j-1})\subset \Dscr\setminus \overline{\Bscr^{j-1}}$.
\item $F_j(M\setminus \mathring K_j)\subset \Dscr\setminus \overline{\Bscr^j}$.
\item $\Flux(F_j)=\Flux (F)$.
\item $\dist_{F_j}(p_0, bK_i)>i$ for all $i\in\{0,\ldots,j\}$.
\item If $n\geq 5$, then $F_j$ is an embedding.
\item If $n\ge 5$ then $\epsilon_j<\min\left\{ \epsilon_{j-1}/2, \tau_j,\varsigma_j\right\}$, 
where the number $\tau_j$ is defined by (\ref{eq:tauj}) and 
\[ 
	\varsigma_j= \frac1{2j^2} \inf\left\{ \|F_{j-1}(p)-F_{j-1}(q)\| : p,q\in M,\ {\rm d}(p,q)>\frac1{j}\right\}.
\] 
If $n=3,4$ then $\epsilon_j<\min\left\{ \epsilon_{j-1}/2, \tau_j\right\}$.
\end{enumerate}

Notice that $\Xi_0=(F_0,K_0,\epsilon_0)$ satisfies {\rm (e$_0$)},  {\rm (f$_0$)}, {\rm (g$_0$)}, and {\rm (h$_0$)},  
whereas the other conditions  are void for $j=0$. Let $j\in\n$ and assume  the existence of triples $\Xi_0,\ldots, \Xi_{j-1}$ 
enjoying these  conditions. Fix $\epsilon_j>0$ such that  (${\rm i}_j$) holds. Applying Lemma \ref{lem:proper}
to the data
\[
\Lscr=\Bscr^{j-1},\quad \Dscr,\quad \eta=\delta_{j-1},\quad M,\quad F=F_{j-1},\quad K=K_{j-1},\quad \delta=\delta_j,
\]
we obtain $F_j\in\CMI_*^1(M,\r^n)$ satisfying {\rm (b$_j$)}, {\rm (c$_j$)}, {\rm (d$_j$)}, {\rm (f$_j$)}, 
$F_j(bM)\subset \Dscr\setminus\overline{\Bscr^j}$, and $\dist_{F_j}(p_0, bK_i)>i$ for all $i\in\{0,\ldots,j-1\}$. 
Therefore we may choose $K_j\Subset M$ fulfilling conditions {\rm (a$_j$)} and {\rm (e$_j$)}. 

We now apply Lemma \ref{lem:Jordan2} to approximate $F_j$  uniformly on $M$ 
by a conformal minimal immersion $\wh F_j\in \CMI_*^1(M,\r^n)$ such that $\wh F_j(M) \subset \Dscr$,
$\Flux(\wh F_j)=\Flux (F)$,  and $\dist_{\wh F_j}(p_0,bM) > j$. 
Assuming as we may that the approximation is close enough, 
$\wh F_j$ satisfies all the properties of $F_j$ that we have verified so far.
Replacing $F_j$ by $\wh F_j$ and enlarging the set $K_j$ if necessary we may  assume that 
condition {\rm (g$_j$)} holds as well.  

Finally, Theorem \ref{th:gp} enables us to ensure condition {\rm (h$_j$)}, thereby closing the induction.

Properties $({\rm c}_j)$ and \eqref{eq:sumetas} guarantee that the sequence $\{F_j\}_{j\in\n}$ converges in the 
$\Cscr^0(M)$ topology  to a continuous map $\wt F= \lim_{j\to+\infty} F_j\colon M\to\r^n$ satisfying 
Theorem \ref{th:proper2} {\rm (iv)}; take into account that $\Bscr^j=\Dscr_{\delta_j}$ and so 
$\kappa_{\Bscr^j}^{\rm min}>\kappa_{\Dscr}^{\rm min}$ for all $j\in\n$ (cf.\ \eqref{eq:curvature}).
From conditions {\rm (b$_j$)} and {\rm (i$_j$)} we obtain that
\begin{equation}\label{eq:properness}
	\text{$\wt F$ is $\epsilon_j$-close to $F_j$ in the $\Cscr^1(K_j)$ topology for all $j\in\z_+$.}
\end{equation}
In particular, $\wt F$ is $\epsilon_0$-close to $F_0=F$ in the $\Cscr^1(K)$ topology;
since $\epsilon_0<\mu$, property {\rm (v)} in Theorem \ref{th:proper2} holds.
Furthermore, as in the proof of Theorem \ref{th:Jordan}, we see that conditions 
{\rm (g$_j$)},  {\rm (h$_j$)}, {\rm (i$_j$)}, and \eqref{eq:properness} ensure that $\wt F|_{\mathring M}\colon \mathring M\to\r^n$ 
is a complete minimal immersion which is an embedding if $n\geq 5$. 
Finally, property {\rm (f$_j$)} give that $\Flux(\wt F)=\Flux(F)$, whereas {\rm (d$_j$)} and \eqref{eq:properness} guarantee 
that $\wt F(\mathring M)\subset\Dscr$ and $\wt F|_{\mathring M}\colon \mathring M\to \Dscr$ is a proper map; recall that 
$\Bscr^j=\Dscr_{\delta_j}$ for all $j\in\n$ and that $\{\delta_j\}_{j\in\n}\searrow 0$.
Since $\wt F$ is continuous on $M$, it follows that $\wt F(bM)\subset b\Dscr$ is a finite
family of curves.
\end{proof}

%
%

\begin{remark}\label{rem:Jordan}
Our method does not ensure that the map $\wt F|_{bM}\colon bM\to\r^n$ in Theorem \ref{th:proper2} is an embedding. 
The reason is that, at each step in the recursive procedure, we can only assert that $F_j$ is 
$\sqrt{2\delta_{j-1}^2+{2\delta_{j-1}}/{\kappa_{\Dscr}^{\rm min}}}$-close to $F_{j-1}$ in the 
$\Cscr^0(bM)$ topology (cf.\ {\rm (c$_j$)}), and the number $\delta_{j-1}$ is given a priori in the construction of 
$F_{j-1}$ (or in other words, $F_{j-1}$ depends on $\delta_{j-1}$). To guarantee embeddedness of  $\wt F(bM)$ a more 
accurate approximation, depending on the geometry of $F_{j-1}(bM)$, would be required.
\qed\end{remark}


\section{Proof of Theorems \ref{th:proper}, \ref{th:topology} and Corollary \ref{co:bdddomains}} \label{sec:topology}

%
%

\subsection{Proof of Theorem \ref{th:proper}}
Part (a) is a direct consequence of Theorem \ref{th:proper2}; 
indeed for $F$ and $\Dscr$ as in Theorem \ref{th:proper} {\rm (a)}, just take any smoothly bounded, 
strictly convex domain $\Lscr\Subset \Dscr$ with $F(bM)\cap \overline\Lscr=\emptyset$ and apply Theorem \ref{th:proper2}. 

We now prove part (b). 
Let $F\colon M\to\overline \Dscr$ be as in Theorem \ref{th:proper} {\rm (b)} and let $\epsilon>0$. Up to a translation we may 
assume without loss of generality that the origin $0\in\r^n$ lies in $\Dscr$. Set $d:=\max\{\|x\|\colon x\in b\Dscr\}>0$. 
Fix $\lambda\in(0,\min\{1,1/2d\kappa_{\Dscr}^{\rm max}\})$ to be specified later. Set $F_0:=(1-\lambda) F\in\CMI^1(M,\r^n)$ 
and observe that  $F_0(bM)\subset \Dscr\setminus\overline \Dscr_{2\lambda d}$ and 
\begin{equation}\label{eq:co1}
\|F_0-F\|_{0,M}\leq \lambda d.
\end{equation}
Theorem \ref{th:proper2} applied to the data $\Lscr=\Dscr_{2\lambda d},$ $\Dscr$, $\eta=2\lambda d$, and $F=F_0$, 
furnishes a continuous map $\wt F\colon M\to \overline\Dscr$ such that $\wt F(\mathring M)\subset \Dscr$, 
$\wt F|_{\mathring M}\colon \mathring M\to\Dscr$ is a conformal complete proper minimal immersion 
(embedding if $n\geq 5$), $\Flux(\wt F)=\Flux(F_0)$, and 
\[
	\big\|\wt F-F_0\big\|_{0,M}<\sqrt{8\lambda^2 d^2+{4\lambda d}\left(\frac1{\kappa_\Dscr^{\rm min}}-2\lambda d\right)}
\]
 (take into account \eqref{eq:curvature}). Together with \eqref{eq:co1} we obtain that $\|\wt F-F\|_{0,M}<\epsilon$
 provided that $\lambda>0$ is chosen sufficiently small. This shows that the flux can be changed by an arbitrarily small amount 
 when passing from $F$ to $\wt F$. This completes the proof of Theorem \ref{th:proper}.

%
%

\subsection{Proof of Theorem \ref{th:topology}}

Let $D\subset \r^n$ $(n\geq 3)$ be a convex domain. Take an exhaustion 
$\Bscr^0\Subset\Bscr^1\Subset\cdots \Subset \cup_{j\in\z_+} \Bscr^j=D$ of $D$ by smoothly bounded, strictly convex domains 
$\Bscr^j\subset\r^n$. Choose a sequence $\{\lambda_j\}_{j\in\z_+}\searrow 0$ with $0<\lambda_j<1/\kappa_{\Bscr^j}^{\rm max}$,  
and denote by $\delta_j$ the Hausdorff distance between 
$\overline{\Bscr^j_{\lambda_j}}$ and $\overline{\Bscr^{j+1}}$  for all $j\in\z_+$. It follows that $\delta_j>\lambda_j$ and 
$\Bscr^{j+1}\subset\Bscr^j_{-\delta_j+\lambda_j}$ for all $j\in\z_+$. (Observe that possibly $\Bscr^j_{-\delta_j+\lambda_j}\nsubseteq D$.)

%
%

\medskip
\noindent{\em Proof of part (a)}. Let $M$ be a compact bordered Riemann surface.  Let $F_0\in\CMI_*^1(M,\r^n)$ be an immersion
 (embedding if $n\geq 5$) satisfying $F_0(bM)\subset \Bscr^1\setminus\overline{\Bscr^0}$. Choose $K_0$ any smoothly bounded 
 compact domain in $\mathring M$ which is a strong deformation retract of $M$ and with 
 $F_0(M\setminus \mathring K_0)\subset \Bscr^0\setminus\overline{\Bscr^0_{\lambda_0}}$, and any number $\epsilon_0>0$. 
 As in the proof of Theorem \ref{th:proper2}, we may recursively apply Lemma \ref{lem:proper}, Lemma \ref{lem:Jordan2}, 
 and Theorem \ref{th:gp} in order to construct a sequence $\{\Xi_j=(K_j, F_j, \epsilon_j)\}_{j\in\n}$, where 
$K_j$ is a smoothly bounded compact domain in $\mathring M$ which is a strong deformation retract of $M$ and we have 
$\bigcup_{j\in\n} K_j=\mathring M$,  $F_j\in\CMI_*^1(M,\r^n)$, and $\epsilon_j>0$, 
satisfying the following conditions for all $j\in\n$:
\begin{enumerate}[\rm (a{$_j$})]
\item $K_{j-1}\subset \mathring K_j$.
\item $\|F_j-F_{j-1}\|_{1,K_{j-1}}<\epsilon_j$.
\item $\|F_j-F_{j-1}\|_{0,M}<\sqrt{2\delta_{j-1}^2+{2\delta_{j-1}}/{\kappa_{\Bscr_{\lambda_{j-1}}^{j-1}}^{\rm min}}}$.
\item $F_j(M\setminus \mathring K_{j-1})\subset \Bscr^j\setminus \overline{\Bscr_{\lambda_{j-1}}^{j-1}}$. 
\item $F_j(M\setminus \mathring K_j)\subset \Bscr^j\setminus \overline{\Bscr_{\lambda_j}^j}$.
\item $\Flux(F_j)=\Flux (F_0)$.
\item $\dist_{F_j}(p_0, bK_i)>i$ for all $i\in\{0,\ldots,j\}$.
\item If $n\geq 5$, then $F_j$ is an embedding.
\item If $n\ge 5$ then $\epsilon_j<\min\left\{ \epsilon_{j-1}/2, \tau_j,\varsigma_j\right\}$, 
where the number $\tau_j$ is defined by (\ref{eq:tauj}) and 
\[ 
	\varsigma_j= \frac1{2j^2} \inf\left\{ \|F_{j-1}(p)-F_{j-1}(q)\| : p,q\in M,\ {\rm d}(p,q)>\frac1{j}\right\}.
\] 
If $n=3,4$ then $\epsilon_j<\min\left\{ \epsilon_{j-1}/2, \tau_j\right\}$.
\end{enumerate}
(Property {\rm (c$_j$)} is useless in this proof and can be ruled out. In fact, unlikely in Theorem \ref{th:proper2}, it does not enable us 
to ensure that the sequence $\{F_j\}_{j\in\z_+}$ converges up to $bM$;  see Remark \ref{rem:continuous} for a more detailed explanation.) 
In this case, to pass from $F_{j-1}$ to $F_j$ in the inductive step we apply Lemma \ref{lem:proper} to the data
\[
\Lscr=\Bscr_{\lambda_{j-1}}^{j-1},\quad \Dscr=\Bscr^j,\quad \eta=\delta_{j-1},\quad F=F_{j-1},\quad \delta=\lambda_j.
\]
As in the proof of Theorem \ref{th:proper2}, and taking into account that $D=\cup_{j\in\z_+}\Bscr^j$, these properties ensure that 
$\{F_j\}_{j\in\n}$ converges uniformly on compact subsets of $\mathring M$ to a conformal complete proper minimal immersion
 (embedding if $n\geq 5$) $\wt F\colon \mathring M\to D$. Furthermore, since $F_0$ is full then $\wt F$ is also full provided the 
 $\epsilon_j$'s are chosen small enough at each step. This concludes the proof of part {\rm (a)}.

%
%
\medskip
\noindent{\em Proof of part (b)}. 
Let $\wt M$ be an open Riemann surface and let $\pgot\colon H_1(\wt M;\z)$ be a group homomorphism. 
Exhaust $\wt M$ by an increasing sequence $M_0\subset M_1\subset\cdots\subset \bigcup_{j=0}^\infty M_j=\wt M$ of 
compact smoothly bounded connected Runge regions  such that $M_0$ is a disc and the Euler characteristic of 
$M_j\setminus\mathring M_{j-1}$ satisfies $\chi(M_j\setminus\mathring M_{j-1})\in\{0,-1\}$ for all $j\in\n$.

Set $K_0:=M_0$ and let $F_0\in\CMI_*^1(K_0,\r^n)$ be an immersion (embedding if $n\geq 5$) satisfying 
$F_0(K_0)\subset \Bscr^0\setminus\overline{\Bscr^0_{\lambda_0}}$. Fix $\epsilon_0>0$ and a point $p_0\in \mathring M$. 
We shall construct a sequence $\{\Xi_j=(K_j, F_j, \epsilon_j)\}_{j\in\z_+}$ where $K_j\subset M_j$ is a smoothly bounded 
compact Runge domain which is a strong deformation retract of $M_j$, $F_j\in\CMI_*^1(K_j,\r^n)$, and $\epsilon_j>0$, 
satisfying the following conditions:
\begin{enumerate}[\rm (a{$_j$})]
\item[\rm (a{$_j$})] $K_{j-1}\subset \mathring K_j$.
\item[\rm (b{$_j$})] $\|F_j-F_{j-1}\|_{1,K_{j-1}}<\epsilon_j$.
%
%
\item[\rm (d{$_j$})] $F_j(K_j\setminus \mathring K_{j-1})\subset \Bscr^j\setminus \overline{\Bscr_{\lambda_{j-1}}^{j-1}}$.
\item[\rm (e{$_j$})] $F_j(b K_j)\subset \Bscr^j\setminus \overline{\Bscr_{\lambda_j}^j}$.
\item[\rm (f{$_j$})] $\Flux(F_j)=\pgot|_{H_1(K_j;\z)}$.
\item[\rm (g{$_j$})] $\dist_{F_j}(p_0, bK_i)>i$ for all $i\in\{0,\ldots,j\}$.
\item[\rm (h{$_j$})] If $n\geq 5$, then $F_j$ is an embedding.
\item[\rm (i{$_j$})] If $n\ge 5$ then $\epsilon_j<\min\left\{ \epsilon_{j-1}/2, \tau_j,\varsigma_j\right\}$, 
where the number $\tau_j$ is defined by (\ref{eq:tauj}) and 
\[ 
	\varsigma_j= \frac1{2j^2} \inf\left\{ \|F_{j-1}(p)-F_{j-1}(q)\| : p,q\in K_{j-1},\ {\rm d}(p,q)>\frac1{j}\right\}.
\] 
If $n=3,4$ then $\epsilon_j<\min\left\{ \epsilon_{j-1}/2, \tau_j\right\}$.
\end{enumerate}
(Observe that there is no property {\rm (c{$_j$})} in the above list; this is not a misprint, we labeled the properties in this way in 
order to emphasize that, under our current assumptions, a condition similar to {\rm (c{$_j$})} in the proof of part {\rm (a)} is not expected.)
The triple $\Xi_0=(K_0,F_0,\epsilon_0)$ meets the above conditions for $j=0$, except for {\rm (a$_0$)}, {\rm (b$_0$)}, {\rm (d$_0$)}, 
and {\rm (i$_0$)} which are void. For the inductive step, assume that we have triples $\Xi_0,\ldots,\Xi_{j-1}$ satisfying the required
properties for some $j\in\n$ and let us construct $\Xi_j$. Fix $\epsilon_j>0$ to be specified later. Let us distinguish cases depending
on whether the Euler characteristic $\chi(M_j\setminus\mathring M_{j-1})$ equals $0$ or $-1$.

\smallskip
\noindent{\em Case 1:} $\chi(M_j\setminus\mathring M_{j-1})=0$. In this case there
is no change of topology when passing from $M_{j-1}$ to $M_j$. Therefore, $K_{j-1}$ is a strong deformation retract of $M_j$. 
By the Mergelyan Theorem for conformal minimal immersions \cite[Theorem 5.3]{AFL} 
we may find a smoothly bounded compact region $K_j\subset M_j$ and may approximate $F_{j-1}$ by a map
$\wt F_j\in\CMI_*^1(K_j,\r^n)$ such that the triple $\wt\Xi_j=(K_j,\wt F_j,\epsilon_j)$ satisfies {\rm (a$_j$)},  {\rm (b$_j$)}, 
%
%
{\rm (d$_j$)}, {\rm (f$_j$)}, and {\rm (g$_j$)} for $i=0,\ldots,j-1$. (For {\rm (d$_j$)} take into account {\rm (e$_{j-1}$)}.) 
Applying Lemma \ref{lem:proper} to the data
\[
M=K_j,\quad \Lscr=\Bscr_{\lambda_{j-1}}^{j-1},\quad \Dscr=\Bscr^j,\quad \eta=\delta_j,\quad F=\wt F_j,\quad \delta=\lambda_j,
\]
we obtain $F_j\in\CMI_*^1(K_j,\r^n)$ such that the triple $\Xi_j=(K_j,F_j,\epsilon_j)$ meets condition {\rm (e$_j$)} in addition to 
the above properties. Finally, by Lemma \ref{lem:Jordan2} and Theorem \ref{th:gp}, we may also assume that {\rm (g$_j$)}, 
{\rm (h$_j$)}, and {\rm (i$_j$)} are also satisfied.

\smallskip
\noindent{\em Case 2:} $\chi(M_j\setminus\mathring M_{j-1})=-1$. In this case there exists a smooth arc 
$\gamma\subset \mathring M_j\setminus \mathring K_{j-1}$ with both endpoints in $bK_{j-1}$ and otherwise disjoint with 
$K_{j-1}$ such that $\chi(M_j\setminus(\mathring K_{j-1}\cup\gamma))=0$. We may also assume that 
$S=K_{j-1}\cup\gamma\Subset M_j$ is an admissible subset in the sense of \cite[Def.\ 5.1]{AFL}. 
Extend $F_{j-1}$ to a generalized conformal minimal immersion $(F_{j-1},f\theta)$ on $S$, 
in the sense of \cite[Def.\ 5.2]{AFL}, such that 
\begin{equation}\label{eq:tildeF}
\text{$F_{j-1}(\gamma)\subset \Bscr^{j-1}\setminus \overline{\Bscr_{\lambda_{j-1}}^{j-1}}$\; and\; 
$\int_\alpha \Im(f\theta)=\pgot(\alpha)$ for every closed curve $\alpha\subset S$;}
\end{equation}
take into account {\rm (e$_{j-1}$)} and {\rm (f$_{j-1}$)}. 
By the Mergelyan theorem for conformal minimal immersions \cite[Theorem 5.3]{AFL} we may approximate $F_{j-1}$ on $S$ by maps $\wt F_{j-1}\in\CMI_*^1(M_j,\r^n)$. 
If the approximation is close enough then, taking into account Theorem \ref{th:gp}, 
there exists  a smoothly bounded compact Runge region $L_{j-1}\Subset M_j$ which is is a strong deformation retract of $M_j$ 
and satisfies $S\Subset L_{j-1}$ and 
$\chi(M_j\setminus \mathring{L}_{j-1})=0$, such that the triple $\wt\Xi_{j-1}=(L_{j-1},\wt F_{j-1},\epsilon_{j})$ satisfies
\begin{enumerate}[\rm (a{$_{j}$})]
\item[\rm ($\wt{\rm a}${$_{j}$})] $K_{j-1}\Subset \mathring{L}_{j-1}\subset M_j$.
%
%
\item[\rm ($\wt{\rm c}${$_{j}$})] $\|\wt F_{j-1}-F_{j-1}\|_{1, K_{j-1}}<\epsilon_j/2$.
\item[\rm ($\wt{\rm d}${$_{j}$})]  $\wt F_{j-1}(L_{j-1}\setminus \mathring K_{j-1})\subset \Bscr^{j-1}\setminus \overline{\Bscr_{\lambda_{j-1}}^{j-1}}$. 
(Take into account {\rm (e$_{j-1}$)} and \eqref{eq:tildeF}.)
\item[\rm ($\wt{\rm f}${$_{j}$})]  $\Flux(\wt F_{j-1})=\pgot|_{H_1(L_{j-1};\z)}$. 
(Take into account {\rm (f$_{j-1}$)} and \eqref{eq:tildeF}.)
\item[\rm ($\wt{\rm g}${$_{j}$})]  $\dist_{\wt F_{j-1}}(p_0, bK_i)>i$ for all $i\in\{0,\ldots,j-1\}$. 
(Take into account {\rm (g$_{j-1}$)}.) 
In particular, $\dist_{\wt F_{j-1}}(p_0, bL_{j-1})>j-1$; see {\rm ($\wt{\rm a}$$_j$)}.
\item[\rm ($\wt{\rm h}${$_{j}$})]  If $n\geq 5$ then $\wt F_{j-1}$ is an embedding.
\end{enumerate}
This reduces the proof to Case 1, closing the induction and concluding the construction of the sequence $\{\Xi_j\}_{j\in\z_+}$.

Set $\Rcal:=\bigcup_{j\in\z_+} K_j$. Since $\bigcup_{j\in\z_+} M_j=\wt M$ and $K_j$ is a strong deformation retract of $M_j$ 
for all $j\in\z_+$, property {\rm (a$_j$)} ensures that $\Rcal\subset \wt M$ is an open domain homeomorphic to $\wt M$. 
Given $\epsilon>0$, properties {\rm (b$_j$)} and {\rm (i$_j$)} guarantee that we may choose the numbers $\epsilon_j>0$ small enough 
in the inductive construction so that the sequence $\{F_j\}_{j\in\z_+}$ converges uniformly on compact subsets of $\Rcal$ to a 
conformal minimal immersion $\wt F:=\lim_{j\to\infty} F_j\colon \Rcal\to \r^n$ which is $\epsilon$-close to $F$ in the 
$\Cscr^1(M)$ topology; recall that $K_0=M$. Further, if the $\epsilon_j$'s are chosen sufficiently small, conditions 
{\rm (d$_j$)}, {\rm (f$_j$)}, {\rm (g$_j$)}, and {\rm (h$_j$)} guarantee that $\wt F(\Rcal)\subset D$, $\wt F\colon\Rcal\to D$ 
is a proper map, $\Flux(\wt F)=\pgot$, $\wt F$ is complete, and, if $n\geq 5$, $\wt F$ is an embedding. 
(Recall that $\cup_{j\in\z_+}\Bscr^j=D$.) See the proof of Theorem \ref{th:proper2} for details on how to check these properties. 
This concludes the proof of part {\rm (b)}.

%
%

\begin{remark}\label{rem:continuous}
Our method does not ensure that $\wt F\colon \mathring M\to D$ in Theorem \ref{th:topology} {\rm (a)} 
extends continuously up to $bM$. The reason is that, at each step in the recursive process, we only have that $F_j$ is 
$\sqrt{2\delta_{j-1}^2+{2\delta_{j-1}}/{\kappa_{\Bscr_{\lambda_{j-1}}^{j-1}}^{\rm min}}}$-close to $F_{j-1}$ 
in the $\Cscr^0(bM)$ topology (see {\rm (c$_j$)}). Since the domains $\Bscr^j$'s need not be parallel to each other, 
this sequence is not necessarily Cauchy (in fact neither 
$\{\delta_j\}_{j\in\z_+}$ nor $\{1/\kappa_{\Bscr_{\lambda_j}^{j-1}}^{\rm min}\}_{j\in\z_+}$ need to 
be bounded sequences in general) and so we do not get convergence of the sequence $\{F_j\}_{j\in\z_+}$ up to $bM$.
\qed \end{remark}

%
%
%
%
\subsection{Proof of Corollary \ref{co:bdddomains}}

Let $D$ be a domain in $\r^n$ with a smooth strictly convex boundary point $x\in bD$, that is to say, $bD$ is smooth and 
has positive principal curvatures with respect to the inner normal in a neighborhood of $x$.
There exist a number $r>0$ and a smoothly bounded, strictly convex domain $\Dscr\subset D$ such that $x\in b\Dscr$ and 
$U:=b\Dscr\cap \b(x,r)\subset bD$, where $\b(x,r)\subset\r^n$ denotes the Euclidean ball centered at $x$ with radius $r$. 
Fix a number $\lambda\in(0,1/\kappa_{\Dscr}^{\rm max})$ to be specified later. 

\medskip \noindent {\em Proof of part  (a).}
Let $M$ be a compact bordered Riemann surface and let $F\in\CMI^1_*(M,\r^n)$ be a conformal minimal immersion satisfying 
$F(M)\subset \Dscr\setminus\overline\Dscr_\lambda$ and 
\begin{equation}\label{eq:co2}
	\|F(p)-x\|<\lambda\quad \text{for all $p\in M$.}
\end{equation}
Theorem \ref{th:proper2}, applied to the data $\Lscr=\Dscr_\lambda,$ $\Dscr$, $\eta=\lambda$, and $F$, furnishes a continuous map 
$\wt F\colon M\to \overline\Dscr$ such that $\wt F(\mathring M)\subset \Dscr\subset D$, 
$\wt F|_{\mathring M}\colon \mathring M\to\Dscr$ is a conformal complete proper minimal immersion (embedding if $n\geq 5$), 
and 
\[
	\big\|\wt F-F\big\|_{0,M}<\sqrt{2\lambda^2 +{2\lambda}\left(\frac1{\kappa_\Dscr^{\rm min}}-\lambda\right)}
\]
(take into account \eqref{eq:curvature}). In view of \eqref{eq:co2} we get that $\|\wt F(p)-x\|<r$ for all $p\in M$, provided that 
$\lambda>0$ is chosen small enough. Since $\wt F(bM)\subset b\Dscr$, we obtain that $\wt F(bM)\subset U\subset bD$ and hence 
$\wt F|_{\mathring M}\colon \mathring M\to D$ is proper. Finally, since $F$ is full, $\wt F$ is also full provided the 
approximation is close enough.  This completes the proof of part (a).

%
%
\medskip \noindent  {\em Proof of part  (b).}
Pick a number $r'$ with $0<r'<r$ and a decreasing sequence 
$\{\lambda_j\}_{j\in\z_+}\searrow 0$ with $0<\lambda_j<\min\{r',1/2\kappa_{\Dscr}^{\rm max}\}$ for all $j\in\z_+$. 
These constants will be specified later. Set $\Bscr^j=\Dscr_{\lambda_j}$ and $\delta_j=\lambda_j-\lambda_{j+1}$ for all $j\in\z_+$.
Let $\wt M$, $\pgot$, and $\{M_j\}_{j\in\z_+}$ be as in the proof of Theorem \ref{th:topology} {\rm (b)}. 
Let $F_0\colon M_0\to \r^n$ be a nondegenerate conformal minimal immersion with 
$F_0(M_0)\subset \b(x,r')\cap(\Bscr^0\setminus\overline{\Bscr^0_{\lambda_0}})$.  
As in the proof of Theorem \ref{th:topology} {\rm (b)} we may recursively construct a sequence 
$\{\Xi_j=(K_j,F_j,\epsilon_j)\}_{j\in\z_+}$ satisfying conditions {\rm (a$_j$)}, {\rm (b$_j$)}, {\rm (d$_j$)},{\rm (e$_j$)}, 
{\rm (f$_j$)}, {\rm (g$_j$)}, {\rm (h$_j$)}, and {\rm (i$_j$)} there, and also
\begin{enumerate}[\rm (a$_j$)]
\item[\rm (c$_j$)] $F_j(K_j)\subset \b\Big(x,r'+\sum_{i=1}^j\sqrt{2\delta_{i-1}^2+{2\delta_{i-1}}/{\kappa_{\Bscr^{i-1}_{\lambda_{i-1}}}^{\rm min}}}\Big)$ for all $j\in\z_+$.
\end{enumerate}
Indeed, this extra condition is directly granted by Lemma \ref{lem:proper} {\rm (a)} when passing from $\Xi_{j-1}$ to $\Xi_j$; 
in case $\chi(M_j\setminus \mathring M_{j-1})=-1$ we take the arc $\gamma$ so that 
$F_{j-1}(\gamma)\subset (\Bscr^{j-1}\setminus\overline{\Bscr^{j-1}_{\lambda_{j-1}}}) \cap \b\Big(x,r'+\sum_{i=1}^{j-1}\sqrt{2\delta_{i-1}^2+{2\delta_{i-1}}/{\kappa_{\Bscr^{i-1}_{\lambda_{i-1}}}^{\rm min}}}\Big)$, 
which is possible in view of {\rm (c$_{j-1}$)} and {\rm (e$_{j-1}$)}.

Taking into account that $\delta_j=\lambda_j-\lambda_{j+1}$ and 
$1/\kappa_{\Bscr^j_{\lambda_j}}^{\rm min}=-2\lambda_j+1/\kappa_{\Dscr}^{\rm min}$ (cf.\ \eqref{eq:curvature}), 
the above properties ensure that the sequence $\{F_j\}_{j\in\z_+}$ converges to a conformal complete proper nondegenerate 
minimal immersion (embedding if $n\geq 5$) $\wt F\colon \Rcal\to\Dscr$, where $\Rcal=\cup_{j\in\z_+} M_j$ is homeomorphic to 
$\wt M$, satisfying $\Flux(\wt F)=\pgot$ and $\wt F(\Rcal)\subset b\Dscr\cap\b(x,r)\subset bD$, provided that $r'$ and the $\lambda_j$'s 
are chosen small enough. This concludes the proof.


\subsection*{Acknowledgements}
A. Alarc\'on is supported by the Ram\'on y Cajal program of the Spanish Ministry of Economy and Competitiveness.

A.\ Alarc\'{o}n and F.\ J.\ L\'opez are partially supported by MCYT-FEDER grant MTM2011-22547 and 
Junta de Andaluc\'ia grant P09-FQM-5088. 

B.\ Drinovec Drnov\v sek and F.\ Forstneri\v c are partially  supported  by the research program P1-0291 and the grant J1-5432 
from ARRS, Republic of Slovenia. 

Part of this work was made when F.\ Forstneri\v c visited the institute IEMath-Granada with support by the GENIL-SSV 2014 program.

Part of this work was made when B.\ Drinovec Drnov\v sek visited University of Oslo. She would like 
to thank the Department of Mathematics for the hospitality and partial financial support.



\vskip 13mm

\noindent Antonio Alarc\'{o}n

\noindent Departamento de Geometr\'{\i}a y Topolog\'{\i}a, Universidad de Granada, E--18071 Granada, Spain.

\noindent  e-mail: {\tt alarcon@ugr.es}

\vspace*{0.3cm}
\noindent Barbara Drinovec Drnov\v sek

\noindent Faculty of Mathematics and Physics, University of Ljubljana, and Institute
of Mathematics, Physics and Mechanics, Jadranska 19, SI--1000 Ljubljana, Slovenia.

\noindent e-mail: {\tt barbara.drinovec@fmf.uni-lj.si}

\vspace*{0.3cm}

\noindent Franc Forstneri\v c

\noindent Faculty of Mathematics and Physics, University of Ljubljana, and Institute
of Mathematics, Physics and Mechanics, Jadranska 19, SI--1000 Ljubljana, Slovenia.

\noindent e-mail: {\tt franc.forstneric@fmf.uni-lj.si}

\vspace*{0.3cm}

\noindent Francisco J.\ L\'opez

\noindent Departamento de Geometr\'{\i}a y Topolog\'{\i}a, Universidad de Granada, E--18071 Granada, Spain.

\noindent  e-mail: {\tt fjlopez@ugr.es}

\begin{thebibliography}{12}

\bibitem{Abraham}
Abraham, R.:
Transversality in manifolds of mappings.
Bull.\ Amer.\ Math.\ Soc. \textbf{69} (1963) 470--474

\bibitem{A1}
Alarc\'on, A: 
Compact complete minimal immersions in $\r^3$. 
Trans.\ Amer.\ Math.\ Soc. {\bf 362} (2010) 4063--4076

\bibitem{A2}
Alarc\'on, A: 
Compact complete proper minimal immersions in strictly convex bounded regular domains of $\r^3$.
XVIII International Fall Workshop on Geometry and Physics, 105--111, AIP Conf. Proc., 1260, Amer. Inst. Phys., Melville, NY, 2010

\bibitem{AF}
Alarc\'on, A.;  Forstneri\v c, F.: 
Every bordered Riemann surface is a complete proper curve in a ball. 
Math.\ Ann. {\bf 357} (2013) 1049--1070

\bibitem{AF1}
Alarc\'on, A.;  Forstneri\v c, F.: 
Null curves and directed immersions of open Riemann surfaces. 
Inventiones Math. \textbf{196}  (2014) 733--771

\bibitem{AF2}
Alarc\'on, A.;  Forstneri\v c, F.: 
The Calabi-Yau problem, null curves, and Bryant surfaces. Math.\ Ann., in press. 
\texttt{http://link.springer.com/10.1007/s00208-015-1189-9}


\bibitem{AFL} 
Alarc\'on, A.;  Forstneri\v c, F.; L\'opez, F.\ J.:
Embedded minimal surfaces in $\r^n$. Preprint (2014)
\texttt{arxiv.org/abs/1409.6901}  

\bibitem{AL1}
Alarc\'on, A; L\'opez, F.\ J.: 
Minimal surfaces in $\r^3$ properly projecting into $\r^2$.
J.\ Differential Geom. \textbf{90} (2012) 351--382

\bibitem{AL-CY}
Alarc\'on, A; L\'opez, F.\ J.: 
Null curves in $\c^3$ and Calabi-Yau conjectures. 
Math.\ Ann. \textbf{355} (2013) 429--455

\bibitem{AL-Israel}
Alarc\'on, A; L\'opez, F.\ J.: 
Compact complete null curves in complex 3-space. 
Israel J.\ Math. {\bf 195} (2013) 97--122

\bibitem{AL-JEMS}
Alarc\'on, A; L\'opez, F.\ J.: 
Complete bounded embedded complex curves in $\c^2$.
J.\ Eur.\ Math.\ Soc., in press.

\bibitem{AL-No}
Alarc\'on, A; L\'opez, F.\ J.: 
Approximation theory for non-orientable minimal surfaces and applications.
Geom.\ Topol. \textbf{19} (2015) 1015--1062

\bibitem{AL-Ritore}
Alarc\'on, A; L\'opez, F.\ J.: 
Complete non-orientable minimal surfaces in $\r^3$ and asymptotic behavior. 
Anal.\ Geom.\ Metr.\ Spaces \textbf{2} (2014) 214--234

\bibitem{AN} 
Alarc\'{o}n, A.; Nadirashvili, N.: 
Limit sets for complete minimal immersions.
Math.\ Z. {\bf 258} (2008) 107--113

\bibitem{Bourgain}
Bourgain, J.;
On the radial variation of bounded analytic functions on the disc. 
Duke Math.\ J. {\bf 69} (1993) 671--682

\bibitem{CM}
Colding, T.\ H.; Minicozzi II, W.\ P.: 
The Calabi-Yau conjectures for embedded surfaces. 
Ann.\ of Math. (2) {\bf 167} (2008) 211--243

\bibitem{Do} 
Douglas, J.: 
Solution of the problem of Plateau. 
Trans.\ Amer.\ Math.\ Soc. {\bf 33} (1931) 263--321

\bibitem{DF2007} 
Drinovec Drnov\v sek, B.; Forstneri\v c, F.:
Holomorphic curves in complex spaces.
Duke Math.\ J. \textbf{139} (2007) 203--254

\bibitem{DF2012} 
Drinovec Drnov\v sek, B.; Forstneri\v c, F.:
The Poletsky-Rosay theorem on singular complex spaces.
Indiana Univ.\ Math.\ J. \textbf{61} (2012) 1707--1423 

\bibitem{DF2014}
Drinovec Drnov\v sek, B.; Forstneri\v c, F.:
Minimal hulls of compact sets in $\r^3$. Trans.\ Amer.\ Math.\ Soc., in press.
\texttt{arxiv.org/abs/1409.6906}

\bibitem{FMM}
Ferrer, L.; Mart\'in, F.; Meeks III, W.\ H.:
Existence of proper minimal surfaces of arbitrary topological type.
Adv.\ Math. {\bf 231} (2012) 378--413

\bibitem{F2011} 
Forstneri\v c, F.:  
Stein Manifolds and Holomorphic Mappings (The Homotopy Principle in Complex Analysis). 
Ergebnisse der Mathematik und ihrer Grenzgebiete, 3.\ Folge, 56. 
Springer-Verlag, Berlin-Heidelberg (2011)

\bibitem{FW0} 
Forstneri\v c, F.; Wold, E.\ F.: 
Bordered Riemann surfaces in $\c^2$. 
J.\ Math.\ Pures Appl. (9) \textbf{91} (2009) 100--114 

\bibitem{Gl}
Globevnik, J.:
A complete complex hypersurface in the ball of $\mathbb{C}^N$. 
Ann.\ of Math. (2), in press. 


\texttt{arxiv.org/abs/1401.3135}

\bibitem{G2}
Globevnik, J.:
Boundary continuity of complete proper holomorphic maps. 
J.\ Math.\ Anal.\ Appl.\ \textbf{424} (2015) 824--825 

\bibitem{G3}
Globevnik, J.:
Holomorphic functions unbounded on curves of finite length. Math. Ann., in press.

\texttt{http://link.springer.com/article/10.1007/s00208-015-1253-5}

\bibitem{MN}
Mart\'in, F.; Nadirashvili, N.: 
A Jordan curve spanned by a complete minimal surface. 
Arch.\ Ration.\ Mech.\ Anal. {\bf 184} (2007) 285--301

\bibitem{MP1} 
Meeks III, W.\ H.; P\'erez, J: 
A survey on classical minimal surface theory. 
University Lecture Series, 60. Amer.\ Math.\ Soc., Providence, RI, 2012

\bibitem{MP2} Meeks III, W.\ H.; P\'erez, J:
 The classical theory of minimal surfaces. 
Bull.\ Amer.\ Math.\ Soc. (N.S.) {\bf 48} (2011) 325--407

\bibitem{MPR}
Meeks III, W.\ H.; P{\'e}rez, J.; Ros, A.: 
The embedded Calabi-Yau conjectures for finite genus.
In preparation.

\bibitem{Na} 
Nadirashvili, N.: 
Hadamard's and Calabi-Yau's conjectures on negatively curved and minimal surfaces.
Invent.\ Math. {\bf 126} (1996) 457--465

\bibitem{Na2} 
Nadirashvili, N.: 
An application of potential analysis to minimal surfaces.
Mosc.\ Math.\ J. {\bf 1} (2001) 601--604

\bibitem{Osserman}
Osserman, R.: 
A survey of minimal surfaces. Second ed. 
Dover Publications, Inc., New York, 1986

\bibitem{Ra} 
Rad\'{o}, T.: 
On Plateau's problem. 
Ann.\ of Math. (2) {\bf 31} (1930) 457--469

\end{thebibliography}
\end{document}